\documentclass[11pt,letterpaper,reqno]{amsart}
\usepackage{amsmath,amsthm,amsfonts,amssymb,mathtools,algorithm,algpseudocode}
\usepackage{comment,bookmark,bm,color,xcolor,tikz,hyperref,url,graphicx,subcaption,multicol}
\usepackage{array,booktabs}

\hypersetup{pdfstartview={FitH}}
\def\restrict#1{\raise-.5ex\hbox{\ensuremath|}_{#1}}
\newtheorem{Theorem}{{\bf Theorem}}[section]

\newtheorem{Corollary}[Theorem]{{\bf Corollary}}

\newtheorem{Proposition}[Theorem]{{\bf Proposition}}

\newtheorem{Lemma}[Theorem]{{\bf Lemma}}
\newtheorem{Remark}[Theorem]{{\bf Remark}}

\numberwithin{equation}{section}

\newcommand{\C}{\mathbb{C}}
\newcommand{\off}{\textup{off}}

\begin{document}

\title[The ``naive'' Jacobi eigenvalue algorithm]{Convergence and mixed-precision preconditioning for the ``naive'' Jacobi eigenvalue algorithm}
\author{Erna Begovi\'{c}~Kova\v{c}}\thanks{\textsc{Erna Begovi\'{c} Kova\v{c}}, University of Zagreb Faculty of Chemical Engineering and Technology, Maruli\'{c}ev trg 19, 10000 Zagreb, Croatia. \texttt{ebegovic@fkit.unizg.hr}}
\author{Marija Milolo\v{z}a~Pandur}\thanks{\textsc{Marija Milolo\v{z}a~Pandur}, University of Osijek School of Applied Mathematics and Informatics, Trg Ljudevita Gaja 6, 31000 Osijek, Croatia. \texttt{mmiloloz@mathos.hr}}
\author{Ana Perkovi\'{c}}\thanks{\textsc{Ana Perkovi\'{c}}, University of Zagreb Faculty of Chemical Engineering and Technology, Maruli\'{c}ev trg 19, 10000 Zagreb, Croatia. \texttt{aperkov@fkit.unizg.hr}}

\date{\today}

\renewcommand{\subjclassname}{\textup{2020} Mathematics Subject Classification}
\subjclass[]{65F15}
\keywords{Jacobi algorithm; nonsymmetric eigenvalue problem; quadratic convergence; mixed-precision arithmetic; preconditioning}

\begin{abstract}
The paper studies a Jacobi-type method for the eigenvalue problem of general complex matrices with simple eigenvalues. The method applies elementary triangular similarity transformations in order to annihilate selected off-diagonal elements and, when convergent, produces highly accurate eigenvalues. We give a new proof of its asymptotic quadratic convergence and derive an explicit, verifiable bound that describes the region in which this convergence is guaranteed. To make the method applicable well beyond matrices already close to the diagonal form, we introduce a preconditioning strategy. We use two types of preconditioners, both based on theoretical convergence results. The preconditioner is computed at lower precision to reduce computational cost, the associated similarity transformation is applied either at working or at higher precision, to preserve spectral information, while the main algorithm performs at working precision. Numerical experiments demonstrate that the resulting algorithm is robust and produces very accurate eigenvalues.
\end{abstract}

\maketitle

\section{Introduction}\label{sec:intro}

The eigenvalue problem is one of the central problems in numerical linear algebra. In this paper we study an eigenvalue algorithm for general complex matrices with simple eigenvalues that originates in the PhD thesis~\cite{Zach}, where it was introduced as the \emph{``naive'' Jacobi algorithm}. 
The advantages of the method were promoted by Veseli\'c~\cite{VeselicNJ}. However, to the best of our knowledge, it has not been further studied. We revisit this method from a modern perspective. We prove a quantitative result for asymptotic convergence, prove the convergence on almost triangular matrices, and develop a preconditioning strategy that makes the method broadly applicable, exploiting mixed-precision arithmetic to reduce the cost of the preconditioning phase.

The well-known Jacobi method for symmetric matrices is a go-to choice for solving the eigenvalue problem on dense matrices. It is known for its global convergence properties~\cite{ShroffSchreiber89,Masc95,Drmac09,Hari15,BKHari17}, high relative accuracy~\cite{DeVe92,Mathias95,DrVe07-1,DrVe07-2,Matejas09}, and inherent parallelism~\cite{EbPa90,BeOVa02,Singer12a,Singer12b,BeOVa15}. Jacobi-type ideas have also been extended to nonsymmetric matrices~\cite{Eberlein62,Veselic76,Veselic79,VeselicWenzel79,Mehl08,BKHari24,BKPe26}, making them relevant for more general eigenvalue problems. However, several key features of the symmetric Jacobi method do not carry over directly to the nonsymmetric setting. In particular, the off-diagonal norm need not decrease monotonically, which makes convergence analysis substantially more delicate.

One such nonsymmetric variant is the ``naive'' Jacobi algorithm. The method is an iterative diagonalization procedure for general square matrices with simple eigenvalues. At each step, two elementary triangular transformations are applied to the current matrix in order to annihilate  selected pivot elements. We have
$$A^{(k+1)}=S_k^{-1}T_k^{-1}A^{(k)}T_kS_k, \quad A^{(0)}=A, \quad k\geq0,$$
where transformations $T_k$ and $S_k$ differ from the identity~$I$ in only one element in upper and lower triangle, respectively. Although naive in the way the annihilations are applied, the method shows high relative accuracy when it converges. 

We prove the asymptotic quadratic convergence of the algorithm applied on almost diagonal matrices. This property was already established in~\cite{Zach}, but here we give a different proof and provide an explicit, verifiable bound for when the quadratic convergence occurs, which was not presented in~\cite{Zach}. This quantitative form is important both theoretically and practically, since it provides a criterion for when the local convergence theory applies. Additionally, we prove the quadratic convergence on almost triangular matrices. Then, we design the preconditioner that moves the starting matrix closer to the region where the quadratic convergence result can be invoked, which is  important as it makes the algorithm applicable to arbitrary complex matrices with simple eigenvalues. 

We implement the preconditioned algorithm in mixed precision arithmetic. Mixed-precision algorithms have become an important topic in numerical linear algebra~\cite{CaHi18,OkCa22,HiMa22}. In general, they aim to provide results of the same quality as an algorithm running in fixed precision, but at a much lower cost. Instead of using only 64-bit IEEE double precision arithmetic~\cite{IEEE}, they combine two or more different floating point precisions. Balancing a lower and a higher precision, the goal is to cut memory bandwidth and energy consumption via lower precision, but maintain the accuracy of the results via higher precision. Following recent evidence that mixed-precision computing can improve the properties of the symmetric Jacobi algorithm~\cite{Higham2025,Zhang2025,Tisseur26}, we construct a low-precision preconditioner, leveraging the reduced computational overhead. The resulting similarity transformation is then applied in double or quadruple precision in order to preserve the spectral information, and the naive Jacobi iteration is carried out in double precision. This way, the low-precision phase is used only where high accuracy is not essential, while the eigenvalue computation itself remains a double-precision procedure. We work with two types of preconditioners, one that moves the matrix near the diagonal form, which is based on the low precision eigendecomposition. It is obtained via the MATLAB \texttt{eig} function performed at low precision. The other type of preconditioner produces a matrix close to the upper-triangular form. This is achieved using the Schur decomposition at low precision, or with several iterations of the QR eigenvalue algorithm performed at low precision~\cite{LAPACK,Golub}.

The key contributions of the paper are as follows:
\begin{itemize}
\item  A new proof of asymptotic quadratic convergence for the naive Jacobi algorithm and the derivation of an explicit, verifiable bound for the convergence region.
\item Convergence of the naive Jacobi algorithm on almost triangular matrices.
\item Two types of low-precision preconditioners that provide practical preconditioning strategy for general matrices with simple eigenvalues.
\item Mixed-precision preconditioned naive Jacobi algorithm that achieves remarkably accurate eigenvalues.
\end{itemize}

In Section~\ref{sec:agm} we present the naive Jacobi algorithm and in Section~\ref{sec:cvg} we prove its asymptotic quadratic convergence, along with the convergence on almost triangular matrices. The main results are given in Theorems~\ref{tm:conv} and~\ref{tm:almosttriang}. Numerical experiments for the algorithm without preconditioning are presented in Section~\ref{sec:num_1}. We move to mixed-precision arithmetic in Section~\ref{sec:mp} where we describe the preconditioning strategy and report the extensive results of the numerical experiments in Section~\ref{sec:num_mp}. Finally, Section~\ref{sec:conclusion} contains concluding remarks.

\section{Detailed description of the algorithm}\label{sec:agm}

We first observe that a general diagonalizable $2\times2$ matrix can be diagonalized in two steps, using two triangular transformations of the form
\begin{equation}\label{TS2}
T=\left[
      \begin{array}{cc}
        1 & x \\
        0 & 1 \\
      \end{array}
    \right] \quad \text{and} \quad S=\left[
      \begin{array}{cc}
        1 & 0 \\
        y & 1 \\
      \end{array}
    \right].
\end{equation}

Let $A\in\C^{2\times 2}$. In~\eqref{TS2}, we choose the parameters $x,y\in\C$  such that
\begin{equation}\label{Tzero}
A'=T^{-1}AT=\left[
      \begin{array}{cc}
        a_{11}' & 0 \\
        a_{21}' & a_{22}' \\
      \end{array}
    \right]
\end{equation}
and
\begin{equation}\label{Szero}
A''=S^{-1}A'S=\left[
      \begin{array}{cc}
        a_{11}'' & 0 \\
        0 & a_{22}'' \\
      \end{array}
    \right].
\end{equation}
After the transformation~\eqref{Tzero}, assuming that $a_{12}\neq0$, we have,
\begin{align*}
A'=T^{-1}AT & = \left[
      \begin{array}{cc}
        1 & -x \\
        0 & 1 \\
      \end{array}
    \right]\left[
      \begin{array}{cc}
        a_{11} & a_{12} \\
        a_{21} & a_{22} \\
      \end{array}
    \right]\left[
      \begin{array}{cc}
        1 & x \\
        0 & 1 \\
      \end{array}
    \right] \\
& = \left[
      \begin{array}{cc}
        a_{11}-a_{21}x & -a_{21}x^2+(a_{11}-a_{22})x+a_{12} \\
        a_{21} & a_{22}+a_{21}x \\
      \end{array}
    \right].
\end{align*}
The condition $a_{12}'=0$ implies
\begin{equation}\label{xeq}
a_{21}x^2+(a_{22}-a_{11})x-a_{12}=0.
\end{equation}
For $a_{21}\neq0$, value $x$ that annihilates $a_{12}$ is a solution of the quadratic equation~\eqref{xeq},
\begin{equation}\label{eq:x12}
x_{1,2}=\frac{(a_{11}-a_{22})\pm\sqrt{(a_{22}-a_{11})^2+4a_{21}a_{12}}}{2a_{21}}.
\end{equation}
We take $x$ as the root of~\eqref{xeq} with a smaller absolute value.
For $a_{21}=0$, equation~\eqref{xeq} is linear with the solution
\begin{equation}\label{eq:xlin}
x=\frac{a_{12}}{a_{22}-a_{11}}, \quad \text{for } a_{11}\neq a_{22}.
\end{equation}
Note that, if $a_{21}=0$ and $a_{11}=a_{22}$, we have
\begin{equation}\label{nondiagonalizable1}
A=\left[
\begin{array}{cc}
 a_{11} & a_{12} \\
 0 & a_{11} \\
\end{array}
\right],
\end{equation}
which means that $A$ is not diagonalizable (for $a_{12}\neq0$), so we can omit this case.
In both cases, $a_{21}\neq0$ or $a_{21}=0$, we get
\begin{equation}\label{appaqq}
a_{11}'=a_{11}-a_{21}x, \quad a_{22}'=a_{22}+a_{21}x, \quad a_{21}'=a_{21}.
\end{equation}

The second transformation acts on $A'$ in the following way,
\begin{align*}
A''=S^{-1}A'S & = \left[
      \begin{array}{cc}
        1 & 0 \\
        -y & 1 \\
      \end{array}
    \right]\left[
      \begin{array}{cc}
        a_{11}' & 0 \\
        a_{21} & a_{22}' \\
      \end{array}
    \right]\left[
      \begin{array}{cc}
        1 & 0 \\
        y & 1 \\
      \end{array}
    \right] \\
& = \left[
      \begin{array}{cc}
        a_{11}' & 0 \\
        (a_{22}'-a_{11}')y+a_{21} & a_{22}' \\
      \end{array}
    \right].
\end{align*}
If $x$ was computed using the formula~\eqref{eq:xlin}, then $a_{21}'=a_{21}=0$ and $y=0$. Otherwise, it follows from~\eqref{Szero} that
$$(a_{22}'-a_{11}')y+a_{21}=0,$$
that is,
\begin{equation}\label{eq:y}
y=\frac{a_{21}}{a_{11}'-a_{22}'}, \quad \text{for } a_{11}'\neq a_{22}'.
\end{equation}
Again, if $a_{11}'=a_{22}'$, we have
\begin{equation}\label{nondiagonalizable2}
A'=\left[
\begin{array}{cc}
 a_{11}' & 0 \\
 a_{21}' & a_{11}' \\
\end{array}
\right],
\end{equation}
meaning that $A'$, as well as $A$, is not diagonalizable (for $a_{21}'=a_{21}\neq0$) and this case is omitted. In conclusion, with $x$ as in~\eqref{eq:x12} or~\eqref{eq:xlin} and $y$ as in~\eqref{eq:y}, we get
$$S^{-1}T^{-1}ATS=\left[
      \begin{array}{cc}
        \lambda_1 & 0 \\
        0 & \lambda_2 \\
      \end{array}
    \right].$$
Using this idea, an iterative algorithm, first called \emph{naive Jacobi method} in~\cite{Zach}, is constructed for the diagonalization of $n\times n$ matrices.

Let $A\in\C^{n\times n}$ be a diagonalizable matrix. One iteration of the naive Jacobi algorithm takes the form
\begin{equation}\label{Jacobi_iteration}
A^{(k+1)}=S_k^{-1}T_k^{-1}A^{(k)}T_kS_k, \quad A^{(0)}=A, \quad k\geq0,
\end{equation}
where $\displaystyle  T_k=I+x_ke_{p_k}e_{q_k}^T$ and $S_k=I+y_ke_{q_k}e_{p_k}^T$, for the standard basis vectors $e_{p_k}$ and $e_{q_k}$. That is, $T_k$ and $S_k$ are $n\times n$ matrices differing from the identity in only one element, $x_k$ on the position $(p_k,q_k)$ and $y_k$ on the position $(q_k,p_k)$, respectively.
Iterations~\eqref{Jacobi_iteration} can also be written as
$$A^{(k+1)}=V_k^{-1}A^{(k)}V_k, \quad A^{(0)}=A, \quad k\geq0,$$
where $V_k=T_kS_k$ differs from the identity only in the $2\times2$ submatrix
$$\hat{V}_k=\left[\begin{array}{cc}
1+x_k y_k & x_k \\
y_k & 1 \\
\end{array}\right],$$
obtained at the intersection of the $p_k$th and $q_k$th row and column. Such a submatrix is called the \emph{pivot submatrix} defined by the \emph{pivot pair} $(p_k,q_k)$, $1\leq p_k<q_k\leq n$.

In the $(k+1)$th step of~\eqref{Jacobi_iteration}, the pivot submatrix
$$\hat{A}^{(k)}=\left[
      \begin{array}{cc}
        a_{p_kp_k}^{(k)} & a_{p_kq_k}^{(k)} \\
        a_{q_kp_k}^{(k)} & a_{q_kq_k}^{(k)} \\
      \end{array}
    \right]$$
is diagonalized. We have
$$\hat{A}^{(k+1)}=\hat{S}_k^{-1}\hat{T}_k^{-1}\hat{A}^{(k)}\hat{T}_k\hat{S}_k
=\left[
      \begin{array}{cc}
        a_{p_kp_k}^{(k+1)} & 0 \\
        0 & a_{q_kq_k}^{(k+1)} \\
      \end{array}
    \right],$$
where $\hat{T}_k$ and $\hat{S}_k$ are of the form given in~\eqref{TS2}. Transformation parameters $x_k$ and $y_k$ are analogous to those for $2\times2$ matrices, just taking the index pair $(p_k,q_k)$ instead of $(1,2)$. For $(p,q)=(p_k,q_k)$, one iteration~\eqref{Jacobi_iteration} changes only $p$th and $q$th column and row of $A^{(k)}$. Precisely,
\begin{equation}\label{elements}
\begin{aligned}
a_{pq}^{(k+1)} & =a_{qp}^{(k+1)}=0, \\
a_{pp}^{(k+1)} & =a_{pp}^{(k)}-x_ka_{qp}^{(k)}, \\
a_{qq}^{(k+1)} & =a_{qq}^{(k)}+x_ka_{qp}^{(k)}, \\
a_{ip}^{(k+1)} & =(1+x_ky_k)a_{ip}^{(k)}+y_ka_{iq}^{(k)}, \quad i\neq p,q,\\
a_{pi}^{(k+1)} & = a_{pi}^{(k)}-x_ka_{qi}^{(k)}, \quad i\neq p,q,\\
a_{qi}^{(k+1)} & =(1+x_ky_k)a_{qi}^{(k)}-y_ka_{pi}^{(k)},\quad  i\neq p,q,\\
a_{iq}^{(k+1)} & =a_{iq}^{(k)}+x_ka_{ip}^{(k)}, \quad i\neq p,q, \\
a_{ij}^{(k+1)} & =a_{ij}^{(k)}, \quad i,j\neq p,q.
\end{aligned}
\end{equation}

The order of the pivot pairs defines the pivot strategy. We consider cyclic pivot strategies. They take in a prescribed order all possible pivot pairs, which are those from the upper triangle of an $n\times n$ matrix, and repeat that order of the pivot pairs, until convergence. To give an example, a well-known cyclic pivot strategy is a row-wise strategy that repeats pivot position in the following order:
\begin{equation}\label{row-wise}
(1,2),(1,3),\ldots,(1,n),(2,3),\ldots,(2,n),\ldots,(n-1,n).
\end{equation}

The annihilation of the pivot elements is repeated cyclically until the stopping criterion is satisfied, usually until the off-diagonal norm of the underlying matrix becomes smaller than the prescribed tolerance. Then, the diagonal elements of $\Lambda=A^{(K)}$ are taken as the eigenvalues of $A$, while the columns of the non-singular matrix 
\begin{equation}\label{transformationV}
V= V_0V_1\cdots V_{K-1}=T_0S_0T_1S_1\cdots  T_{K-1}S_{K-1}
\end{equation}
are the computed eigenvectors of $A$. 

In order to reduce the number of operations, when computing the parameter $x_k$, we do not compute  both solutions of the quadratic equation~\eqref{eq:x12}. Instead, we set
$$d_k=a_{qq}^{(k)}-a_{pp}^{(k)}, \quad s_k=\sqrt{d_k^2+4a_{qp}^{(k)}a_{pq}^{(k)}},$$
choose $\sigma_k=\pm1$ such that $|d_k+\sigma_k s_k|$ is maximal, and compute
\begin{equation}\label{eq:ximplementation}
x_k=\frac{2a_{pq}^{(k)}}{d_k+\sigma_k s_k}.
\end{equation}
This way we need to compute the square root only once per iteration. One can check relation~\eqref{eq:x12lemma} from the proof of Lemma~\ref{tm:xy} for detailed derivation of the expression~\eqref{eq:ximplementation}. Moreover, we can compute the parameter $y_k$ as 
\begin{equation}\label{eq:yimplementation}
y_k=-\sigma_k\frac{a_{qp}^{(k)}}{s_k}.
\end{equation}
See equation~\eqref{eq:ylemma} from the proof of Lemma~\ref{tm:xy} for details.

We should also note that, if $|a_{p_kq_k}^{(k)}|$ and $|a_{q_kp_k}^{(k)}|$ are both very close to zero, smaller than a prescribed tolerance, pivot pair $(p_k,q_k)$ is skipped, that is, $T_k=S_k=I$, and we move to $(p_{k+1},q_{k+1})$. Also, in order to reduce the round-off error, the updates of the diagonal elements can be kept in memory and applied at the same time at the end of a cycle.

The discussion presented in this section is summarized in Algorithm~\ref{agm:Jacobi}.

\begin{algorithm}[p]
\caption{Naive Jacobi algorithm}\label{agm:Jacobi}
\renewcommand{\algorithmicrequire}{\textbf{Input:}}
\renewcommand{\algorithmicensure}{\textbf{Output:}}
\begin{algorithmic}
\Require $A\in\C^{n\times n}$ \text{diagonalizable matrix with simple eigenvalues $\lambda_i^*$, $i=1,\ldots,n$}
\Ensure An approximate spectral decomposition $A\approx V\Lambda V^{-1}$
\State $V=I$;
\Repeat
\State Take $(p,q)$ according to the pivot strategy.
\State \textbf{if} $a_{pq}=a_{qp}=0$ \textbf{then} skip this pivot pair. \Comment{Submatrix already diagonal}
\State \textbf{if} $a_{pq}a_{qp}=0$ and $a_{pp}=a_{qq}$ \textbf{then} skip this pivot pair. \Comment{Nondiag. submatrix, \eqref{nondiagonalizable1}}
\If {$a_{pq}\neq0$}
\If {$a_{qp}\neq0$}
\State $d=a_{qq}-a_{pp}$;
\State $s=\sqrt{d^2+4a_{qp}a_{pq}}$;
\If {$|d+s|>|d-s|$}
\State $x=2a_{pq}/(d+s)$; \Comment{\eqref{eq:ximplementation}}
\State \textbf{if} $a_{pp}-a_{qq}=2xa_{qp}$ \textbf{then} skip this pivot pair. \Comment{Nondiag. submatrix, \eqref{nondiagonalizable2}}
\State \textbf{else} $y=-a_{qp}/s$; \textbf{end if} \Comment{\eqref{eq:yimplementation}}
\Else
\State $x=2a_{pq}/(d-s)$; \Comment{\eqref{eq:ximplementation}}
\State \textbf{if} $a_{pp}-a_{qq}=2xa_{qp}$ \textbf{then} skip this pivot pair. \Comment{Nondiag. submatrix, \eqref{nondiagonalizable2}}
\State \textbf{else} $y=a_{qp}/s$; \textbf{end if}\Comment{\eqref{eq:yimplementation}}
\EndIf
\Else
\State $x=a_{pq}/(a_{qq}-a_{pp})$; \Comment{\eqref{eq:xlin}}
\State $y=0$;
\EndIf
\Else
\State $x=0$;
\State $y=a_{qp}/(a_{pp}-a_{qq})$; \Comment{\eqref{eq:y}}
\EndIf
\State
\State $a_{pp}=a_{pp}-xa_{qp}$; \  $a_{qq}=a_{qq}+xa_{qp}$;
\State $a_{pq}=0$; \ $a_{qp}=0$;
\State $t=1+xy$;
\For {$i=1:p-1,p+1:q-1,q+1:n$}
\State $temp1=a_{pi}-xa_{qi}$;
\State $a_{qi}=-ya_{pi}+ta_{qi}$;
\State $a_{pi}=temp1$;
\State $temp2=ta_{ip}+ya_{iq}$;
\State $a_{iq}=xa_{ip}+a_{iq}$;
\State $a_{ip}=temp2$; \Comment{\eqref{elements}}
\EndFor
\For {$i=1:n$}
\State $temp3=v_{iq}+xv_{ip}$;
\State $v_{ip}=v_{ip}+ytemp3$;
\State $v_{iq}=temp3$;\Comment{\eqref{transformationV}}
\EndFor
\Until convergence
\end{algorithmic}
\end{algorithm}

All operations in Algorithm~\ref{agm:Jacobi} are performed in complex floating-point arithmetic. According to the standard LAPACK convention, one complex addition or subtraction is counted as two real FLOPs (floating point operations), one complex multiplication as six real FLOPs, and one complex division as 16 real FLOPs. The complex square root is the only operation whose cost is not standardized; depending on the implementation, it typically requires 20--40 real FLOPs. Since only one square root is computed per iteration, it does not significantly affect the overall computational cost of one cycle of the naive Jacobi algorithm, which is $30n^3 + O(n^2)$ FLOPs. If only the eigenvalues are computed the FLOP count decreases to $22n^3 + O(n^2)$.

\section{Asymptotic convergence}\label{sec:cvg}

In Subsection~\ref{sec:diagonal}, we prove the asymptotic quadratic convergence of the Algorithm~\ref{agm:Jacobi} for almost diagonal matrices with distinct eigenvalues. A similar result was also proven by Zacharias~\cite[Satz~2.7, p.~29]{Zach}, but our approach is different. Most importantly, in Theorem~\ref{tm:conv} we provide a verifiable convergence criterion that includes the diagonal elements of the starting matrix, while the criterion in~\cite{Zach} is purely theoretical, since it involves unknown eigenvalues of the starting matrix~\cite[Eq.~(2.1), p.~15]{Zach}. 

Additionally, in Subsection~\ref{sec:triangular}, we show that the naive Jacobi converges on almost triangular matrices.

\subsection{Almost diagonal matrices}\label{sec:diagonal}

Denote the smallest distance between two diagonal elements of $A^{(k)}=(a_{ij}^{(k)})$ by
\begin{equation}\label{mu_k}
\mu_k\coloneqq \min_{i\neq j}|a_{ii}^{(k)}-a_{jj}^{(k)}|,
\end{equation}
and the largest modulus of the off-diagonal element of $A^{(k)}$ by
\begin{equation}\label{eta_k}
\eta_k\coloneqq \max_{i\neq j}|a_{ij}^{(k)}|.
\end{equation}
In Lemma~\ref{tm:xy} and Corollaries~\ref{tm:maxoffdiag} and~\ref{tm:mueta} we assume that
\begin{equation}\label{assumption_conv}
\mu_k>0 \quad \text{and} \quad \frac{\eta_k}{\mu_k}\leq\frac{1}{2\sqrt{2}}, 
\end{equation}
for some $k$. The first relation is a separation assumption on the diagonal entries, while the second one reflects the asymptotic stage. These assumptions ensure that the transformation parameters $x_k$ and $y_k$ introduced in the previous section are well defined. In Lemma~\ref{tm:xy} we bound their values (cf. \cite[Lemma~2.2]{Zach}).

\begin{Lemma}\label{tm:xy}
If assumptions~\eqref{assumption_conv} hold, then
\begin{equation}\label{eq:xy}
|x_k|\leq2\frac{\eta_k}{\mu_k} \quad \text{and} \quad |y_k|\leq\sqrt{2}\frac{\eta_k}{\mu_k}.
\end{equation}
\end{Lemma}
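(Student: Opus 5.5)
The plan is to work with the single pivot submatrix at step $k$. I will drop the index $k$ and write $(p,q)=(p_k,q_k)$, $d=a_{qq}-a_{pp}$, $b=a_{pq}$, $c=a_{qp}$. Assumption~\eqref{assumption_conv} gives three facts directly: $|d|\geq\mu_k>0$, $|b|,|c|\leq\eta_k$, and $\eta_k^2\leq\mu_k^2/8$. Everything else should follow from the algebraic form of the parameters together with one lower bound on $|s|$, where $s=\sqrt{d^2+4bc}$.

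\emph{Step 1: the formula for $x$.} The annihilating condition~\eqref{xeq} in pivot indices reads $c\,x^2+d\,x-b=0$. For $c\neq0$ its roots are $(-d\pm s)/(2c)$. I rationalize using $(-d+\sigma s)(d+\sigma s)=s^2-d^2=4bc$. This shows that the roots are $2b/(d+\sigma s)$ for $\sigma=\pm1$. The root of smaller modulus is the one whose denominator is larger in modulus, which justifies~\eqref{eq:ximplementation}. Since $(d+s)+(d-s)=2d$, the larger of $|d+s|$ and $|d-s|$ is at least $|d|$. Hence
\[
|x|\leq\frac{2|b|}{|d|}\leq 2\frac{\eta_k}{\mu_k}.
\]
When $c=0$, the linear formula~\eqref{eq:xlin} gives $|x|=|b|/|d|$, which is even smaller. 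The degenerate case~\eqref{nondiagonalizable1} is excluded because $d\neq0$.

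\emph{Step 2: the formula for $y$.} By~\eqref{appaqq}, the new diagonal difference is $a'_{pp}-a'_{qq}=-d-2xc$. Substituting $x=2b/(d+\sigma s)$ gives
\[
-\frac{d(d+\sigma s)+4bc}{d+\sigma s}=-\frac{d^2+\sigma d s+s^2-d^2}{d+\sigma s}=-\sigma s.
\]
Therefore $y=c/(a'_{pp}-a'_{qq})=-\sigma c/s$, which is~\eqref{eq:yimplementation}. In particular, the denominator is nonzero exactly when $s\neq0$, so excluding this also rules out the case~\eqref{nondiagonalizable2}. When $c=0$ we simply have $y=0$.

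\emph{Step 3: bounding $|s|$ from below.} This is the only step where the constant $1/(2\sqrt2)$ is used, and I expect it to be the main point. We have $|4bc|\leq4\eta_k^2\leq\mu_k^2/2\leq|d|^2/2$. Hence
\[
|s|^2=|d^2+4bc|\geq|d|^2-|4bc|\geq|d|^2/2,
\]
so $|s|\geq|d|/\sqrt2>0$. This gives
\[
|y|=\frac{|c|}{|s|}\leq\frac{\sqrt2\,\eta_k}{|d|}\leq\sqrt2\,\frac{\eta_k}{\mu_k}.
\]
It also confirms that all parameters are well defined under~\eqref{assumption_conv}, which completes~\eqref{eq:xy}.
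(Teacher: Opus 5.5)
Your proof is correct and follows essentially the same route as the paper: you rationalize the roots of~\eqref{xeq} to $x=2a_{pq}/(d\pm s)$ and bound the larger denominator below by $|d|$, then show $a'_{pp}-a'_{qq}=\mp s$ so that $y=\mp a_{qp}/s$, and finally use $|s|^2\geq|d|^2-4\eta_k^2\geq|d|^2/2$. The only cosmetic difference is that you get $a'_{pp}-a'_{qq}=-\sigma s$ by substituting the rationalized form of $x$, whereas the paper reads $2a_{qp}x_k=-d_k\pm s_k$ directly off the quadratic formula.
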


\begin{proof}
For a fixed iteration step $k$, let $(p,q)=(p_k,q_k)$. If $a_{qp}^{(k)}=0$, it follows directly from the relation~\eqref{eq:xlin} and the definitions of $\mu_k$ and $\eta_k$, given by~\eqref{mu_k} and~\eqref{eta_k}, respectively, that $|x_k|\leq\frac{\eta_k}{\mu_k}$.

Let $a_{qp}^{(k)}\neq0$. Parameter $x_k$ is obtained as in the relation~\eqref{eq:x12} and it can be written as
\begin{align*}
x_k & =\frac{\left((a_{pp}^{(k)}-a_{qq}^{(k)})\pm\sqrt{(a_{qq}^{(k)}-a_{pp}^{(k)})^2+4a_{qp}^{(k)}a_{pq}^{(k)}}\right)\left((a_{pp}^{(k)}-a_{qq}^{(k)})\mp\sqrt{(a_{qq}^{(k)}-a_{pp}^{(k)})^2+4a_{qp}^{(k)}a_{pq}^{(k)}}\right)}{2a_{qp}^{(k)}\left((a_{pp}^{(k)}-a_{qq}^{(k)})\mp\sqrt{(a_{qq}^{(k)}-a_{pp}^{(k)})^2+4a_{qp}^{(k)}a_{pq}^{(k)}}\right)} \\
& = \frac{-4a_{qp}^{(k)}a_{pq}^{(k)}}{2a_{qp}^{(k)}\left((a_{pp}^{(k)}-a_{qq}^{(k)})\mp\sqrt{(a_{qq}^{(k)}-a_{pp}^{(k)})^2+4a_{qp}^{(k)}a_{pq}^{(k)}}\right)} \\
& = \frac{2a_{pq}^{(k)}}{(a_{qq}^{(k)}-a_{pp}^{(k)})\pm\sqrt{(a_{qq}^{(k)}-a_{pp}^{(k)})^2+4a_{qp}^{(k)}a_{pq}^{(k)}}}.
\end{align*}
Denote
\begin{equation}\label{eq:ds}
d_k\coloneqq a_{qq}^{(k)}-a_{pp}^{(k)}, \quad s_k\coloneqq \sqrt{d_k^2+4a_{qp}^{(k)}a_{pq}^{(k)}},
\end{equation} 
where the same branch of the square root is chosen as in the definition of $x_k$. The assumptions~\eqref{assumption_conv} imply $d_k\neq 0$ and $s_k\neq 0$.
Then
\begin{equation}\label{eq:x12lemma}
x_k=\frac{2a_{pq}^{(k)}}{d_k\pm s_k}.
\end{equation}
Since $x_k$ is chosen as the root with the smaller modulus, it is given by the fraction~\eqref{eq:x12lemma} with  the denominator of the larger modulus. Therefore,
$$|x_k|\leq\frac{2|a_{pq}^{(k)}|}{\max\{|d_k+s_k|,|d_k-s_k|\}}.$$
We observe that
$$|d_k|=\frac{1}{2}|(d_k+s_k)+(d_k-s_k)|\leq\frac{1}{2}\left(|d_k+s_k|+|d_k-s_k|\right)\leq\max\{|d_k+s_k|,|d_k-s_k|\}.$$
Hence, by the definitions of $\eta_k$ and $\mu_k$, we have
$$|x_k|\leq \frac{2|a_{pq}^{(k)}|}{|d_k|}\leq2\frac{\eta_k}{\mu_k}.$$

On the other hand, from~\eqref{eq:y} and~\eqref{appaqq} we have
$$y_k=\frac{a_{qp}^{(k)}}{a_{pp}^{(k)}-a_{qq}^{(k)}-2a_{qp}^{(k)}x_k}.$$
Using  equation~\eqref{eq:x12} for $x_k$, we get
$$2a_{qp}^{(k)}x_k=(a_{pp}^{(k)}-a_{qq}^{(k)})\pm\sqrt{(a_{qq}^{(k)}-a_{pp}^{(k)})^2+4a_{qp}^{(k)}a_{pq}^{(k)}}=-d_k\pm s_k,$$
and
\begin{equation}\label{eq:ylemma}
y_k=\frac{a_{qp}^{(k)}}{\mp s_k}.
\end{equation}
It follows from~\eqref{eq:ds} and the reverse triangular inequality that
$$|s_k|=|d_k|\left|\sqrt{1+\frac{4a_{qp}^{(k)}a_{pq}^{(k)}}{d_k^2}}\right|=|d_k|\sqrt{\left|1+\frac{4a_{qp}^{(k)}a_{pq}^{(k)}}{d_k^2}\right|} \geq |d_k|\sqrt{1-\left|\frac{4a_{qp}^{(k)}a_{pq}^{(k)}}{d_k^2}\right|},$$
since, by~\eqref{mu_k}, \eqref{eta_k} and the assumption~\eqref{assumption_conv} we have
$$\left|\frac{4a_{qp}^{(k)}a_{pq}^{(k)}}{d_k^2}\right|\leq\frac{4\eta_k^2}{\mu_k^2}\leq\frac{1}{2},$$
which implies
$$|s_k|\geq\frac{1}{\sqrt{2}}|d_k|.$$
This, together with~\eqref{eq:ylemma}, gives
$$|y_k|\leq\sqrt{2}\frac{|a_{qp}^{(k)}|}{|d_k|}\leq\sqrt{2}\frac{\eta_k}{\mu_k}.$$
\end{proof}

In the $k$th step of the algorithm, for the pivot position $(p,q)=(p_k,q_k)$, only the elements in the $p$th and $q$th column and row of $A^{(k)}$ are changed. Corollary~\ref{tm:maxoffdiag} provides the upper bound on the growth of the off-diagonal elements.

\begin{Corollary}\label{tm:maxoffdiag}
If the assumptions~\eqref{assumption_conv} hold, then
\begin{equation}\label{eq:maxoffdiag}
\begin{aligned}
& |a_{pi}^{(k+1)}|\leq|a_{pi}^{(k)}|+2\frac{\eta_k^2}{\mu_k}, &\quad |a_{ip}^{(k+1)}|\leq|a_{ip}^{(k)}|+(1+\sqrt{2})\frac{\eta_k^2}{\mu_k}, \\
& |a_{iq}^{(k+1)}|\leq|a_{iq}^{(k)}|+2\frac{\eta_k^2}{\mu_k}, & \quad |a_{qi}^{(k+1)}|\leq|a_{qi}^{(k)}|+(1+\sqrt{2})\frac{\eta_k^2}{\mu_k},
\end{aligned}
\end{equation}
for $(p,q)=(p_k,q_k)$, $i\neq p,q$.
\end{Corollary}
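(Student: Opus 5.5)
The plan is to read off each of the four bounds directly from the explicit update formulas~\eqref{elements}, using the triangle inequality together with the parameter bounds of Lemma~\ref{tm:xy}. Every off-diagonal entry of $A^{(k)}$ has modulus at most $\eta_k$, so each correction term is a product of a parameter ($x_k$, $y_k$ or $x_ky_k$) with such an entry.

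\emph{Bounds with constant $2$.} For $i\neq p,q$ we have $a_{pi}^{(k+1)}=a_{pi}^{(k)}-x_ka_{qi}^{(k)}$. Hence
$$|a_{pi}^{(k+1)}|\leq|a_{pi}^{(k)}|+|x_k|\,\eta_k\leq|a_{pi}^{(k)}|+2\frac{\eta_k^2}{\mu_k},$$
using $|x_k|\leq 2\eta_k/\mu_k$ from~\eqref{eq:xy}. The entry $a_{iq}^{(k+1)}=a_{iq}^{(k)}+x_ka_{ip}^{(k)}$ is treated identically.

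\emph{Bounds with constant $1+\sqrt2$.} For the other two entries, $a_{ip}^{(k+1)}=(1+x_ky_k)a_{ip}^{(k)}+y_ka_{iq}^{(k)}$ and $a_{qi}^{(k+1)}=(1+x_ky_k)a_{qi}^{(k)}-y_ka_{pi}^{(k)}$. I would split off the unperturbed term and estimate the rest:
$$|a_{ip}^{(k+1)}|\leq|a_{ip}^{(k)}|+|x_k||y_k|\eta_k+|y_k|\eta_k\leq|a_{ip}^{(k)}|+\frac{\eta_k^2}{\mu_k}\Bigl(2\sqrt2\,\frac{\eta_k}{\mu_k}+\sqrt2\Bigr).$$
This is the only step that needs more than the lemma itself. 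The cubic term $x_ky_ka_{ip}^{(k)}$ must be absorbed using the second part of assumption~\eqref{assumption_conv}, namely $\eta_k/\mu_k\leq 1/(2\sqrt2)$. This gives $2\sqrt2\,\eta_k/\mu_k\leq1$, so the bracket is at most $1+\sqrt2$. The same computation applies verbatim to $a_{qi}^{(k+1)}$.

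\emph{Degenerate cases.} Finally I would check that the degenerate branches of Algorithm~\ref{agm:Jacobi} are covered. When $a_{qp}^{(k)}=0$, we have $y_k=0$ and $|x_k|\leq\eta_k/\mu_k$. When $a_{pq}^{(k)}=0$, we have $x_k=0$ and $|y_k|=|a_{qp}^{(k)}|/|a_{pp}^{(k)}-a_{qq}^{(k)}|\leq\eta_k/\mu_k$. A skipped pivot has $x_k=y_k=0$. In each case the parameters still satisfy~\eqref{eq:xy}, so the estimates above go through unchanged.
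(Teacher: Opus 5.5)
Your proposal is correct and follows the paper's proof essentially line by line. Both apply the triangle inequality to the update formulas~\eqref{elements}, use the parameter bounds~\eqref{eq:xy}, and invoke $\eta_k/\mu_k\le 1/(2\sqrt{2})$ to absorb the $x_ky_k$ term into the constant $1+\sqrt{2}$. Your extra check of the degenerate branches ($a_{qp}^{(k)}=0$, $a_{pq}^{(k)}=0$, skipped pivots) is not in the paper's proof, but it is a harmless completion, since in each case the parameters still satisfy~\eqref{eq:xy}.
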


\begin{proof}
Using the relations~\eqref{elements}, \eqref{eq:xy}, and~\eqref{assumption_conv} we get
\begin{align*}
|a_{pi}^{(k+1)}| & \leq |a_{pi}^{(k)}|+|x_k||a_{qi}^{(k)}| \leq |a_{pi}^{(k)}|+2\frac{\eta_k}{\mu_k}\eta_k = |a_{pi}^{(k)}|+2\frac{\eta_k^2}{\mu_k}, \\
|a_{iq}^{(k+1)}| & \leq |a_{iq}^{(k)}|+|x_k||a_{ip}^{(k)}| \leq |a_{iq}^{(k)}|+2\frac{\eta_k^2}{\mu_k}, \\
|a_{ip}^{(k+1)}| & \leq |a_{ip}^{(k)}|+|x_k||y_k||a_{ip}^{(k)}|+|y_k||a_{iq}^{(k)}| \leq |a_{ip}^{(k)}|+2\sqrt{2}\frac{\eta_k^2}{\mu_k^2}\eta_k+\sqrt{2}\frac{\eta_k}{\mu_k}\eta_k \\
& \leq |a_{ip}^{(k)}|+\frac{\eta_k^2}{\mu_k}+\sqrt{2}\frac{\eta_k^2}{\mu_k} = |a_{ip}^{(k)}|+(1+\sqrt{2})\frac{\eta_k^2}{\mu_k}, \\
|a_{qi}^{(k+1)}| & \leq |a_{qi}^{(k)}|+|x_k||y_k||a_{qi}^{(k)}|+|y_k||a_{pi}^{(k)}| \leq |a_{qi}^{(k)}|+(1+\sqrt{2})\frac{\eta_k^2}{\mu_k}.
\end{align*}
\end{proof}

In one step of the algorithm, the largest absolute value of an off-diagonal element, $\eta_k$, can increase, while the smallest distance between two diagonal elements, $\mu_k$, can decrease. These are not desirable properties, but increase in $\eta_k$ and decrease in $\mu_k$ is bounded. This is shown in Corollary~\ref{tm:mueta}.

\begin{Corollary}\label{tm:mueta}
If the assumptions~\eqref{assumption_conv} hold, then
\begin{align}
\eta_{k+1} & \leq2\eta_k, \label{eq:eta_leq}\\
\mu_{k+1} & \geq \mu_k-\sqrt{2}\eta_k. \nonumber
\end{align}
\end{Corollary}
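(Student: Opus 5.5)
For the first inequality~\eqref{eq:eta_leq}, I would use Corollary~\ref{tm:maxoffdiag} directly. In the step $k\to k+1$ with pivot pair $(p,q)=(p_k,q_k)$, the off-diagonal entries of $A^{(k+1)}$ fall into three groups, which I would treat separately.
\begin{itemize}
\item Entries $a_{ij}^{(k+1)}$ with $i,j\neq p,q$ are unchanged by~\eqref{elements}, so their modulus is at most $\eta_k$.
\item The pivot entries $a_{pq}^{(k+1)}=a_{qp}^{(k+1)}=0$ are trivially bounded.
\item Every changed off-diagonal entry in rows and columns $p,q$ satisfies, by~\eqref{eq:maxoffdiag}, a bound of the form $|a^{(k+1)}|\leq \eta_k+(1+\sqrt{2})\eta_k^2/\mu_k$, since $2<1+\sqrt{2}$.
\end{itemize}
Using assumption~\eqref{assumption_conv}, $\eta_k/\mu_k\leq 1/(2\sqrt2)$, the last group gives
$$|a^{(k+1)}|\leq \eta_k\Bigl(1+\frac{1+\sqrt2}{2\sqrt2}\Bigr)=\eta_k\Bigl(1+\frac{\sqrt2+2}{4}\Bigr)\approx 1.854\,\eta_k\leq 2\eta_k.$$
Taking the maximum over all three groups yields $\eta_{k+1}\leq 2\eta_k$.

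For the bound on $\mu_{k+1}$, only the diagonal entries $a_{pp}$ and $a_{qq}$ change, by $\mp x_ka_{qp}^{(k)}$, according to~\eqref{elements}. The key is to bound $|x_ka_{qp}^{(k)}|$ well enough. Lemma~\ref{tm:xy} alone gives $2\eta_k^2/\mu_k\leq \eta_k/\sqrt2$, and I will compare the needed estimates against this.

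\begin{itemize}
\item \emph{Pair $(p,q)$ itself.} From the computation in Lemma~\ref{tm:xy}, the new gap is $a_{pp}^{(k+1)}-a_{qq}^{(k+1)}=-d_k-2a_{qp}^{(k)}x_k=\mp s_k$. Hence $|a_{pp}^{(k+1)}-a_{qq}^{(k+1)}|=|s_k|$, and $|s_k|\geq |d_k|\sqrt{1-4\eta_k^2/\mu_k^2}$. I then need $|d_k|\sqrt{1-4\eta_k^2/|d_k|^2}\geq |d_k|-\sqrt2\eta_k$, which is the inequality $\sqrt{d^2-4\eta^2}\geq d-\sqrt2\eta$. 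Squaring, it reduces to $2\sqrt2\,d\,\eta\geq 6\eta^2$, i.e.\ $d\geq \tfrac{3}{\sqrt2}\eta$. This holds because $d\geq\mu_k\geq 2\sqrt2\eta_k$. (If $a_{qp}^{(k)}=0$, the diagonal is unchanged.)
\item \emph{Pair $(p,j)$ or $(q,j)$ with $j\neq p,q$.} Only one of the two entries moves, by $|x_ka_{qp}^{(k)}|\leq \eta_k/\sqrt2\leq\sqrt2\eta_k$, so the gap decreases by at most $\sqrt2\eta_k$.
\item \emph{Pair $(i,j)$ with $i,j\neq p,q$.} The gap is unchanged.
\end{itemize}

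The main obstacle is the pivot pair gap, since both diagonal entries move and the crude bound would give a loss of $2|x_ka_{qp}|$. The exact identity with $s_k$ circumvents this. I expect the square-root inequality to be the delicate point, particularly choosing the correct branch and checking the sign of $d-\sqrt2\eta$ before squaring. That sign is positive, so the squaring step is valid.
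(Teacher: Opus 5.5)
Your proof is correct. The first inequality is argued exactly as in the paper. For $\mu_{k+1}$ the paper takes a shorter path than your case split.

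\textbf{How the paper bounds $\mu_{k+1}$.} It writes $a_{ii}^{(k+1)}=a_{ii}^{(k)}+\Delta_i$ with $\Delta_p=-x_ka_{qp}^{(k)}$, $\Delta_q=x_ka_{qp}^{(k)}$ and $\Delta_i=0$ otherwise. Lemma~\ref{tm:xy} gives $|\Delta_i|\le 2\eta_k^2/\mu_k\le\eta_k/\sqrt2$ for every $i$. The triangle inequality then bounds the loss in every gap, the pivot pair included, by $|\Delta_i|+|\Delta_j|\le\sqrt2\,\eta_k$.

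\textbf{Your stated obstacle is not real.} The crude pivot-pair loss $2|x_ka_{qp}^{(k)}|$ is at most $2\cdot\eta_k/\sqrt2=\sqrt2\,\eta_k$. You had already noted $|x_ka_{qp}^{(k)}|\le\eta_k/\sqrt2$, and this is exactly the constant in the statement.

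\textbf{Your pivot-pair argument is still valid.} You use the exact identity $|a_{pp}^{(k+1)}-a_{qq}^{(k+1)}|=|s_k|$, together with $|s_k|^2\ge|d_k|^2-4\eta_k^2$ and $\sqrt{d^2-4\eta^2}\ge d-\sqrt2\,\eta$ for $d\ge2\sqrt2\,\eta$. Its merit is that it identifies the new pivot gap as the eigenvalue gap of the $2\times2$ pivot block. However, it gives a loss of the same order as the paper's estimate (at most $4\eta_k^2/|d_k|$ in both cases) at noticeably higher cost.

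\textbf{A small presentational slip.} The lower bound you first display, $|d_k|\sqrt{1-4\eta_k^2/\mu_k^2}$, would not suffice when $|d_k|\gg\mu_k$. The version you actually use, with $|d_k|$ instead of $\mu_k$ under the root, is the correct one and follows from the same reverse triangle inequality.
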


\begin{proof}
Inequality~\eqref{eq:eta_leq} follows directly from the definition of $\eta_k$ and the Corollary~\ref{tm:maxoffdiag}, using~\eqref{assumption_conv}. For any index pair $(i,j)$, $i\neq j$, we have
$$|a_{ij}^{(k+1)}|\leq \eta_k+(1+\sqrt{2})\frac{\eta_k^2}{\mu_k}\leq\left(1+\frac{1+\sqrt{2}}{2\sqrt{2}}\right)\eta_k<2\eta_k.$$

From the relations~\eqref{elements} we have
$$a_{ii}^{(k+1)}=a_{ii}^{(k)}+\Delta_i, \quad 1\leq i\leq n,$$
where $\Delta_p=-x_ka_{qp}^{(k)}$, $\Delta_q=x_ka_{qp}^{(k)}$, and $\Delta_i=0$ otherwise.
Lemma~\ref{tm:xy} and assumptions~\eqref{assumption_conv} imply
$$|\Delta_i|\leq\frac{1}{\sqrt{2}}\eta_k, \quad 1\leq i\leq n.$$
Using the triangle inequality, we get
$$|a_{ii}^{(k)}-a_{jj}^{(k)}|=|a_{ii}^{(k+1)}-a_{jj}^{(k+1)}-\Delta_i+\Delta_j|\leq|a_{ii}^{(k+1)}-a_{jj}^{(k+1)}|+|\Delta_i|+|\Delta_j|,$$
that is
$$|a_{ii}^{(k+1)}-a_{jj}^{(k+1)}|\geq|a_{ii}^{(k)}-a_{jj}^{(k)}|-|\Delta_i|-|\Delta_j|, \quad 1\leq i,j\leq n.$$
Thus, if $\mu_{k+1}=|a_{ii}^{(k+1)}-a_{jj}^{(k+1)}|$, then $\mu_{k+1}\geq\mu_k-\frac{2}{\sqrt{2}}\eta_k=\mu_k-\sqrt{2}\eta_k$.
\end{proof}

Before we prove the main result of this section, we need one additional lemma.

\begin{Lemma}\label{tm:convpom}
Let $A^{(k)}$ be generated by Algorithm~\ref{agm:Jacobi} under a fixed cyclic pivot strategy. Let $\mu_k$ and $\eta_k$, $k\geq0$, be as given in~\eqref{mu_k} and~\eqref{eta_k}. Let $n\geq 3$, $\mu_0>0$, and
\begin{equation}\label{eq:tmassumption}
80(n-1)\frac{\eta_0}{\mu_0}\leq1.
\end{equation}
Then, for $N=n(n-1)/2$, the following inequalities hold:
\begin{subequations}\label{eq:pom}
\begin{align}
\eta_{kN} & \leq \frac{\eta_0}{2^k}, \label{eq:pom_etakN} \\
\mu_{kN} & \geq \frac{\mu_0}{2}. \label{eq:pom_mukN} 
\end{align}
\end{subequations}
\end{Lemma}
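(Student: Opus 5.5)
Within one cycle, Corollary~\ref{tm:mueta} by itself only gives $\eta_{k+1}\le 2\eta_k$. Applying it $N$ times would allow growth like $2^N$, which is useless. The plan is therefore to track the off-diagonal elements individually over a cycle, via Corollary~\ref{tm:maxoffdiag}, and to prove \eqref{eq:pom} by induction on $k$.

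\textbf{Reduction to one cycle.} Suppose that at the start of a cycle, say at step $m=kN$, we have $\mu_m\ge\mu_0/2$ and $\eta_m\le\eta_0/2^k$, so that $\eta_m/\mu_m\le 2\eta_0/\mu_0\le 1/(40(n-1))$. It then suffices to prove a one-cycle statement. If $\mu_m>0$ and $c(n-1)\eta_m/\mu_m\le 1$ with a suitable constant ($c=40$), then
$$\eta_{m+N}\le \tfrac12\eta_m \quad\text{and}\quad \mu_{m+N}\ge \mu_m-\text{(small)}.$$
The loss in $\mu$ must sum over all cycles to at most $\mu_0/2$. Iterating, the $\mu$-loss in cycle $k$ will be bounded by roughly $2N\sqrt2\,\eta_0/2^k$, and summing over $k$ gives at most about $C n^2\eta_0$. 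This is where I expect trouble, since \eqref{eq:tmassumption} only controls $(n-1)\eta_0/\mu_0$ and not $n^2\eta_0/\mu_0$. So I would bound the diagonal drift more carefully. Each diagonal entry $a_{ii}$ is changed only in the $n-1$ steps whose pivot pair contains $i$, and each change has modulus $|x||a_{qp}|\le 2\eta^2/\mu$, which is quadratic, not $\sqrt2\eta$. Hence the drift of $a_{ii}$ per cycle is at most $(n-1)\cdot 2\eta_{\max}^2/\mu$, where $\eta_{\max}$ is the maximal $\eta$ within the cycle. This fits \eqref{eq:tmassumption} well.

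\textbf{Growth within a cycle.} By induction over the steps $j=m,\dots,m+N-1$, I will show that $\eta_j\le 2\eta_m$ and $\mu_j\ge \mu_m/2$. This keeps \eqref{assumption_conv} valid throughout the cycle, so Lemma~\ref{tm:xy} and Corollary~\ref{tm:maxoffdiag} apply at every step. The key point is that a fixed off-diagonal entry $a_{ij}$ is modified only when the pivot pair shares an index with $\{i,j\}$, which happens in at most $2(n-2)$ steps of a cycle. Each modification adds at most $(1+\sqrt2)\eta_j^2/\mu_j\le 4(1+\sqrt2)\eta_m^2\cdot 2/\mu_m$. The total additive growth is therefore $O(n\,\eta_m^2/\mu_m)$, which is at most $\eta_m$ under \eqref{eq:tmassumption}. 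The same count bounds the diagonal drift by $O(n\eta_m^2/\mu_m)\le \mu_m/4$, closing the induction.

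\textbf{Halving over a cycle (main obstacle).} The hard step is showing that at the end of the cycle every off-diagonal entry is at most $\eta_m/2$. Each entry $(p,q)$ is set to $0$ at its own pivot step. After that it can grow again only through the later steps of the cycle that touch row or column $p$ or $q$, at most $2(n-2)$ of them. Each of these adds at most $O(\eta^2/\mu)$, with $\eta\le 2\eta_m$ and $\mu\ge\mu_m/2$. So the final modulus is at most $2(n-2)(1+\sqrt2)\cdot 8\,\eta_m^2/\mu_m$, and \eqref{eq:tmassumption} with constant $80$ makes this at most $\eta_m/2$; the constant $80$ is presumably tuned exactly for this. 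Here $n\ge3$ ensures these counts are meaningful.

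For $\mu$, I then sum the quadratic drifts $(n-1)\cdot 8\eta_0^2 4^{-k}/\mu_0$ over all cycles $k$. Using \eqref{eq:tmassumption}, the total is far below $\mu_0/2$, which gives \eqref{eq:pom_mukN}.
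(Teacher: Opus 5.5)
Your architecture matches the paper's proof. It has an outer induction over cycles and an inner strong induction over the steps of a cycle that maintains $\eta_{kN+l}\le 2\eta_{kN}$ and $\mu_{kN+l}\ge\mu_{kN}/2$. It uses the same counts: at most $2(n-2)$ modifications of an off-diagonal entry after its annihilation, and $n-1$ quadratic changes $|x||a_{qp}|\le 2\eta^2/\mu$ of each diagonal entry per cycle. The $\mu$-losses are then summed geometrically. However, a factor-of-two slip breaks the halving step as you have set it up.

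In your reduction you bound the ratio at the start of cycle $k$ only by $\eta_{kN}/\mu_{kN}\le 2\eta_0/\mu_0\le 1/(40(n-1))$, and you state the one-cycle claim with $c=40$. Under that hypothesis, your estimate for an entry annihilated during the cycle gives only
\[
2(n-2)(1+\sqrt{2})\cdot 8\,\frac{\eta_m^2}{\mu_m}\le\frac{16(1+\sqrt{2})}{40}\cdot\frac{n-2}{n-1}\,\eta_m\approx 0.97\,\frac{n-2}{n-1}\,\eta_m ,
\]
which exceeds $\eta_m/2$ for every $n\ge 4$. Your later remark that the constant $80$ makes this at most $\eta_m/2$ silently requires $80(n-1)\eta_{kN}/\mu_{kN}\le 1$, i.e.\ $\eta_{kN}/\mu_{kN}\le\eta_0/\mu_0$. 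Nothing in your write-up establishes this.

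The fix is exactly the paper's inequality \eqref{eq:pom_eta/mu}: do not discard the factor $2^{-k}$. By the induction hypothesis,
\[
\frac{\eta_{kN}}{\mu_{kN}}\le\frac{\eta_0/2^k}{\mu_0/2}=2^{1-k}\,\frac{\eta_0}{\mu_0}\le\frac{\eta_0}{\mu_0}\qquad(k\ge 1),
\]
and the bound is trivial for $k=0$. So the one-cycle claim can be stated with $c=80$. Your bound then becomes $\frac{16(1+\sqrt{2})(n-2)}{80(n-1)}\,\eta_m<0.49\,\eta_m$, as required.

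Alternatively, you can keep $c=40$ and sharpen the per-step constants. In that regime the diagonal drift gives $\mu_j\ge 0.99\,\mu_m$. Since $\eta_j/\mu_j$ is tiny, each modification is at most $2\eta_j^2/\mu_j$ rather than $(1+\sqrt{2})\eta_j^2/\mu_j$, which yields about $0.41\,\eta_m$. The rest of your within-cycle induction ($\eta_j\le 2\eta_m$ and $\mu_j\ge\mu_m/2$) already works with $c=40$.

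A minor bookkeeping point concerns the $\mu$ part. Inside cycle $k$ you have $\eta_j\le 2\eta_0 2^{-k}$ and $\mu_j\ge\mu_0/4$. So the drift of one diagonal entry over the cycle is bounded by $32(n-1)\eta_0^2 4^{-k}/\mu_0$, and the loss in $\mu$ by twice that, not by $8(n-1)\eta_0^2 4^{-k}/\mu_0$. Summing over cycles gives $\frac{256}{3}(n-1)\eta_0^2/\mu_0\le\mu_0/(75(n-1))$, still far below $\mu_0/2$, so your conclusion stands. Summing over all earlier cycles uses the hypothesis for every $j\le k$, so the outer induction should be a strong one.
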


\begin{proof}
We prove the inequalities~\eqref{eq:pom} using mathematical induction. 

For $k=0$, the claim is immediate. 
Assume that~\eqref{eq:pom} hold for some fixed $k\geq0$. Then, since $\mu_{kN}>0$,
\begin{equation}\label{eq:pom_eta/mu}
\frac{\eta_{kN}}{\mu_{kN}}\leq\frac{\eta_0}{\mu_0}.
\end{equation}
We first show that, besides~\eqref{eq:pom} and~\eqref{eq:pom_eta/mu}, inequalities 
\begin{equation}\label{eq:pom_p}
\eta_{kN+l}\leq2\eta_{kN} \quad \text{and} \quad \mu_{kN+l}\geq\frac{\mu_{kN}}{2}
\end{equation}
hold for this $k$ and for every $0\leq l<N$. We prove this claim by strong induction over $l$. 

For $l=0$, \eqref{eq:pom_p} is obvious. 
We assume that~\eqref{eq:pom_p} holds  for all $r$, $0\leq r\leq l<N-1$. Moreover, since $\mu_{kN+r}>0$, we have
$$\frac{\eta_{kN+r}}{\mu_{kN+r}}\leq4\frac{\eta_{kN}}{\mu_{kN}}\substack{\eqref{eq:pom_eta/mu} \\ \leq} 4\frac{\eta_0}{\mu_0} \substack{\eqref{eq:tmassumption} \\ \leq} \frac{1}{20(n-1)} <\frac{1}{2\sqrt{2}},$$
which ensures that the assumptions~\eqref{assumption_conv} are satisfied at each step $kN+r$, $0\leq r\leq l$.

We observe an off-diagonal position $(i,j)$. During one cycle, the entry at position $(i,j)$ is annihilated when $(i,j)$ is the pivot pair and modified only when the pivot pair shares one index with $(i,j)$, which happens in $2(n-2)$ steps. These changes are bounded by the Corollary~\ref{tm:maxoffdiag}.
Thus, if $(i,j)$-entry was annihilated in the first $l$ steps of the cycle, using the bounds~\eqref{eq:maxoffdiag} and the assumption~\eqref{eq:pom_p}, we get
\begin{equation}\label{eq:aij_pivot}
|a_{ij}^{(kN+l+1)}|\leq2(n-2)(1+\sqrt{2})\frac{4\eta_{kN}^2}{\mu_{kN}/2}<40(n-2)\frac{\eta_{kN}^2}{\mu_{kN}} \substack{\eqref{eq:pom_eta/mu} \\ \leq} 40(n-2)\frac{\eta_0}{\mu_0}\eta_{kN} \substack{\eqref{eq:tmassumption} \\ \leq} \eta_{kN}.
\end{equation}
If it was not annihilated during the first $l$ steps, then
$$|a_{ij}^{(kN+l+1)}|\leq|a_{ij}^{(kN)}|+2(n-2)(1+\sqrt{2})\frac{4\eta_{kN}^2}{\mu_{kN}/2} \substack{\eqref{eq:aij_pivot} \\ \leq} |a_{ij}^{(kN)}|+\eta_{kN} \leq 2\eta_{kN}.$$
Since the index pair $(i,j)$ was arbitrary, this proves the first relation in~\eqref{eq:pom_p}.

The diagonal entry at the position $(i,i)$, $1\leq i\leq n$, changes if one of the indices from the pivot pair equals $i$. During one cycle, that happens $n-1$ times. The change can be bounded using the relations~\eqref{elements}, Lemma~\ref{tm:xy}, and the assumption~\eqref{eq:pom_p} for $0\leq r\leq l$. We have
$$|a_{ii}^{(kN+r+1)}-a_{ii}^{(kN+r)}|\leq|x_{kN+r}||a_{ji}^{(kN+r)}|\substack{\eqref{eq:xy} \\ \leq} 2\frac{\eta_{kN+r}}{\mu_{kN+r}}\eta_{kN+r} \substack{\eqref{eq:pom_p} \\ \leq} 16\frac{\eta_{kN}^2}{\mu_{kN}}.$$
After $l+1$ steps of a cycle, this comes to
\begin{equation}\label{eq:pom_aii}
|a_{ii}^{(kN+l+1)}-a_{ii}^{(kN)}|\leq 16(n-1)\frac{\eta_{kN}^2}{\mu_{kN}}.
\end{equation}
Using the triangle inequality we obtain
\begin{align*}
|a_{ii}^{(kN)}-a_{jj}^{(kN)}| \leq |a_{ii}^{(kN)}-a_{ii}^{(kN+l+1)}|+|a_{ii}^{(kN+l+1)}-a_{jj}^{(kN+l+1)}|+|a_{jj}^{(kN+l+1)}-a_{jj}^{(kN)}|,
\end{align*}
that is,
\begin{align*}
|a_{ii}^{(kN+l+1)}-a_{jj}^{(kN+l+1)}| \geq |a_{ii}^{(kN)}-a_{jj}^{(kN)}|-|a_{ii}^{(kN+l+1)}-a_{ii}^{(kN)}|-|a_{jj}^{(kN+l+1)}-a_{jj}^{(kN)}|.
\end{align*}
Hence,
\begin{align}
\mu_{kN+l+1} & \substack{\eqref{eq:pom_aii} \\ \geq}\mu_{kN}-32(n-1)\frac{\eta_{kN}^2}{\mu_{kN}}=\mu_{kN}-32(n-1)\frac{\eta_{kN}^2}{\mu_{kN}^2}\mu_{kN} \label{eq:mukp1} \\
& \substack{\eqref{eq:pom_eta/mu} \\ \geq} \mu_{kN}-32(n-1)\frac{\eta_0^2}{\mu_0^2}\mu_{kN} \substack{\eqref{eq:tmassumption} \\ \geq} \mu_{kN}-\frac{1}{200(n-1)}\mu_{kN}>\frac{\mu_{kN}}{2}, \nonumber
\end{align}
which proves the second relation in~\eqref{eq:pom_p}.

Now, we show that the inequalities~\eqref{eq:pom} also hold for $k+1$. As it was observed earlier in this proof, during one cycle, off-diagonal entry at the position $(i,j)$ is annihilated exactly once and, after annihilation, it is changed at most $2(n-2)$ times. Therefore, repeating the calculation done in~\eqref{eq:aij_pivot}, we obtain
\begin{equation}\label{eq:aij}
|a_{ij}^{(k+1)N}| \leq 40(n-2)\frac{\eta_0}{\mu_0}\eta_{kN}<40(n-1)\frac{\eta_0}{\mu_0}\eta_{kN}.
\end{equation}
This holds for any index pair $(i,j)$. Thus, the assumption~\eqref{eq:tmassumption} implies
$$\eta_{(k+1)N} \leq \frac{1}{2}\eta_{kN}.$$
Now, it follows from the assumption~\eqref{eq:pom_etakN} for $k$ that
$$\eta_{(k+1)N}\leq\frac{1}{2}\frac{\eta_0}{2^k}=\frac{\eta_0}{2^{k+1}},$$
that is, the bound~\eqref{eq:pom_etakN} holds for $k+1$.

For the bound on $\mu_{(k+1)N}$, we use the calculation done in~\eqref{eq:mukp1},
$$\mu_{(k+1)N}\geq \mu_{kN}-32(n-1)\frac{\eta_{kN}^2}{\mu_{kN}}.$$
Applying this recursively for the cycles $k,k-1,\dots,0$ and using the induction hypothesis, we get
\begin{align*}
\mu_{(k+1)N} & \geq \mu_0-32(n-1)\sum_{j=0}^k\frac{\eta_{jN}^2}{\mu_{jN}} \substack{\eqref{eq:pom_etakN} \eqref{eq:pom_mukN}\\ \geq} \mu_0-32(n-1)\sum_{j=0}^k\frac{(\eta_0/2^j)^2}{\mu_0/2} \\ 
& = \mu_0-\frac{64(n-1)\eta_0^2}{\mu_0}\sum_{j=0}^k 4^{-j} \geq \mu_0-\frac{64(n-1)\eta_0^2}{\mu_0}\sum_{j=0}^{\infty}4^{-j} \\
& =\mu_0-\frac{256(n-1)\eta_0^2}{3\mu_0} =\mu_0-\frac{256(n-1)\eta_0^2}{3\mu_0^2}\mu_0,
\end{align*}
because $\sum_{j=0}^{\infty}4^{-j}=\frac{4}{3}$. Then, assumption~\eqref{eq:tmassumption} with $n\geq2$ implies
$$\mu_{(k+1)N}\geq\mu_0-\frac{0.02}{n-1}\mu_0>\frac{\mu_0}{2},$$
and we conclude that~\eqref{eq:pom_mukN} holds for $k+1$.
\end{proof}

If a matrix $A\in\C^{n\times n}$ is sufficiently close to a diagonal matrix, then its diagonal entries approximate the eigenvalues of $A$. In the next proposition (cf.~\cite[Lemma 2.5]{Zach}), we show that, at an advanced stage of the naive Jacobi method, each diagonal entry approximates a unique eigenvalue of $A$.

\begin{Proposition}\label{prop:eig}
Let $n\geq 3$, $k\geq0$, and the assumption~\eqref{eq:tmassumption} hold.
If an eigenvalue $\lambda_p^*$ of $A$ lies within the $p$th Gershgorin disk $G_p^{(kN)}$ for some $k$ and for $1\leq p\leq n$, then $(a_{pp}^{(kN)})$ converges to~$\lambda_p^*.$
\end{Proposition}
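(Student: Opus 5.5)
The plan is to combine Gershgorin's theorem with the cycle-wise bounds of Lemma~\ref{tm:convpom}: show that at every cycle boundary $jN$, $j\geq k$, the Gershgorin disks of $A^{(jN)}$ are pairwise disjoint; that the disk $G_p^{(jN)}$ contains $\lambda_p^*$ for every $j\geq k$; and that its radius tends to zero.

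\emph{Step 1 (radii and disjointness).} By Lemma~\ref{tm:convpom}, the radius of $G_i^{(jN)}$ satisfies
\[
r_i^{(jN)}\leq (n-1)\eta_{jN}\leq (n-1)\frac{\eta_0}{2^j}\leq \frac{\mu_0}{80\cdot 2^j},
\]
using~\eqref{eq:tmassumption}. The centers satisfy $|a_{ii}^{(jN)}-a_{ll}^{(jN)}|\geq\mu_{jN}\geq\mu_0/2$. Since $2\cdot\mu_0/80<\mu_0/2$, the $n$ disks of $A^{(jN)}$ are pairwise disjoint, with a gap of at least $\mu_0/2-\mu_0/40$ between any two. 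The standard Gershgorin continuity argument applies to the homotopy $D+t(A^{(jN)}-D)$, $t\in[0,1]$, where $D$ is the diagonal part of $A^{(jN)}$. It shows that each disk $G_i^{(jN)}$ contains exactly one eigenvalue of $A^{(jN)}$. Since $A^{(jN)}$ is similar to $A$, these are the eigenvalues of $A$.

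\emph{Step 2 (the eigenvalue in the $p$th disk stays the same; the main obstacle).} Suppose $\lambda_p^*\in G_p^{(jN)}$ for some $j\geq k$. We must show $\lambda_p^*\in G_p^{((j+1)N)}$, i.e.\ that the disk with index $p$ does not swap eigenvalues with another disk during the cycle. Let $\lambda$ be the unique eigenvalue in $G_p^{((j+1)N)}$. The diagonal entry moves during one cycle by at most
\[
|a_{pp}^{((j+1)N)}-a_{pp}^{(jN)}|\leq 16(n-1)\frac{\eta_{jN}^2}{\mu_{jN}}\leq 16(n-1)\frac{\eta_0}{\mu_0}\,\eta_{jN}\leq \tfrac15\eta_{jN},
\]
by~\eqref{eq:pom_aii} together with~\eqref{eq:pom_eta/mu} and~\eqref{eq:tmassumption}. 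Hence
\[
|\lambda-\lambda_p^*|\leq r_p^{((j+1)N)}+\tfrac15\eta_{jN}+r_p^{(jN)}\leq 3(n-1)\eta_{jN}\leq \tfrac{3}{80}\mu_0 .
\]
Any other eigenvalue lies in a different disk of $A^{(jN)}$. By Step~1 its distance from $\lambda_p^*$ is at least $\mu_0/2-\mu_0/40$, which exceeds $3\mu_0/80$. Therefore $\lambda=\lambda_p^*$. Induction from $j=k$ then gives $\lambda_p^*\in G_p^{(jN)}$ for all $j\geq k$. I expect the only delicate point to be this bookkeeping of constants, so that the movement of the center plus both radii stays below the separation of eigenvalues.

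\emph{Step 3 (convergence).} For all $j\geq k$ we now have
\[
|a_{pp}^{(jN)}-\lambda_p^*|\leq r_p^{(jN)}\leq (n-1)\frac{\eta_0}{2^j}\longrightarrow 0 .
\]
Hence $(a_{pp}^{(jN)})_j$ converges to $\lambda_p^*$. If convergence of the full sequence $(a_{pp}^{(m)})_m$ is intended, apply~\eqref{eq:pom_aii} within each cycle: the intermediate iterates stay within $16(n-1)\eta_{jN}^2/\mu_{jN}\to0$ of $a_{pp}^{(jN)}$, so they converge to $\lambda_p^*$ as well.
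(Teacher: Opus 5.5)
Your proof is correct. Its Step~3 is the paper's entire proof: bound $|a_{pp}^{(jN)}-\lambda_p^*|$ by the Gershgorin radius $(n-1)\eta_{jN}\le (n-1)\eta_0/2^j$ and let $j\to\infty$. Your Steps~1--2 are the genuine difference, and they are needed.

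The paper uses the hypothesis $\lambda_p^*\in G_p^{(kN)}$ only at the single index $k$ where it is assumed, and then sends $k\to\infty$ in the resulting bound. That tacitly requires $\lambda_p^*\in G_p^{(jN)}$ for all large $j$, which ``for some $k$'' does not give. Your argument supplies exactly this persistence:
\begin{itemize}
\item Under \eqref{eq:tmassumption}, Lemma~\ref{tm:convpom} keeps the disks at every cycle boundary pairwise disjoint (radii at most $\mu_0/80$, centers at least $\mu_0/2$ apart), so each disk holds exactly one eigenvalue.
\item The drift of $a_{pp}$ over one cycle plus the two radii stays below $3\mu_0/80$, far under the eigenvalue separation $19\mu_0/40$. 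So the $p$th disk cannot exchange its eigenvalue with another disk.
\end{itemize}

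A minor citation point: your intermediate bound $3(n-1)\eta_{jN}$ uses $\eta_{(j+1)N}\le\eta_{jN}/2$. That inequality is established inside the proof of Lemma~\ref{tm:convpom}, not in its statement. You can avoid it by bounding both radii by $\mu_0/80$ directly, which gives a total below $\mu_0/40+\mu_0/800<3\mu_0/80$.

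Your route also gives two things the paper's does not:
\begin{itemize}
\item Step~1 at $j=0$ shows the hypothesis can always be met, by labelling $\lambda_p^*$ as the unique eigenvalue in $G_p^{(0)}$. This is what the subsequent Remark about a fixed eigenvalue order on the limiting diagonal actually relies on.
\item Your final sentence extends the conclusion from the cycle subsequence to the full sequence $(a_{pp}^{(m)})_m$.
\end{itemize}
The paper's version is shorter, but it is rigorous only if the hypothesis is read as holding for every cycle index from some point on.
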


\begin{proof}
By the assumption on $\lambda_p^*$, and by~\eqref{eq:pom_etakN}we have
$$|a_{pp}^{(kN)}-\lambda_p^*| \leq \sum_{j\neq p}|a_{pj}^{(kN)}|\leq (n-1) \eta_{kN}\leq (n-1)\frac{\eta_0}{2^k}.$$
Letting $k\to\infty$, we get 
$$\lim_{k\to\infty} a_{pp}^{(kN)}=\lambda_p^*.$$
\end{proof}

The following remark is implied by the previous discussion. The matrix off-norm is, as usual, defined as
$$\off^2(A)=\sum_{i\neq j}|a_{ij}|^2.$$

\begin{Remark}
Relation~\eqref{eq:pom_etakN} implies that $\lim_{k\rightarrow\infty}\eta_{kN}=0$, that is, under assumption~\eqref{eq:tmassumption}, the sequence of the off-norms $\off(A^{(kN)})$ converges to zero. Additionally, Proposition~\ref{prop:eig} ensures that the sequence $(A^{(kN)})$ converges to a diagonal matrix with some fixed order of the eigenvalues on its diagonal entries. \end{Remark}

Using the results from Lemma~\ref{tm:convpom}, we are ready to prove that  convergence of $\eta_{kN}$ is quadratic. 

\begin{Theorem}\label{tm:conv}
Let $A=(a_{ij})\in\mathbb{C}^{n\times n}$, $n\geq 3$, and let $A^{(k)}=(a_{ij}^{(k)})$ denote the matrix obtained from $A$ after performing $k$ iterations of the form~\eqref{Jacobi_iteration} under a fixed cyclic pivot strategy. Let $\mu_k$ and $\eta_k$, $k\geq0$, be as given in~\eqref{mu_k} and~\eqref{eta_k}, with $\mu_0>0$ and
$$80(n-1)\frac{\eta_0}{\mu_0}\leq1.$$
Then, iteration process~\eqref{Jacobi_iteration} converges quadratically, that is, for $N=\frac{n(n-1)}{2}$,
$$\eta_{(k+1)N}\leq C\eta_{kN}^2,$$
and
$$\off\left(A^{((k+1)N)}\right)\leq C\sqrt{2N}\off^2\left(A^{(kN)}\right),$$
where $C\coloneqq\frac{80(n-1)}{\mu_0}$ is a constant depending only on $n$ and $\mu_0$.
\end{Theorem}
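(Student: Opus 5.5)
The proof will reuse, almost verbatim, the one-cycle estimate from the proof of Lemma~\ref{tm:convpom}, but without replacing $\eta_{kN}/\mu_{kN}$ by $\eta_0/\mu_0$. By Lemma~\ref{tm:convpom}, the hypothesis $80(n-1)\eta_0/\mu_0\leq1$ gives $\eta_{kN}\leq\eta_0/2^k$ and $\mu_{kN}\geq\mu_0/2$ for every $k\geq0$. Its inner induction also gives, for all steps $kN+l$ with $0\leq l<N$, the bounds \eqref{eq:pom_p}: $\eta_{kN+l}\leq2\eta_{kN}$ and $\mu_{kN+l}\geq\mu_{kN}/2$. Consequently \eqref{assumption_conv} holds at every step of every cycle, so Lemma~\ref{tm:xy} and Corollary~\ref{tm:maxoffdiag} may be applied throughout.

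Fix a cycle $k$ and an off-diagonal position $(i,j)$. During the cycle, this entry is annihilated exactly once, when $(i,j)$ (or $(j,i)$) is the pivot pair. Afterwards it is modified at most $2(n-2)$ times, namely at the steps where the pivot pair shares exactly one index with $(i,j)$. By Corollary~\ref{tm:maxoffdiag} combined with \eqref{eq:pom_p}, each such modification adds at most $(1+\sqrt2)\,\eta_{kN+l}^2/\mu_{kN+l}\leq(1+\sqrt2)\cdot 4\eta_{kN}^2/(\mu_{kN}/2)$. Summing these contributions,
$$|a_{ij}^{((k+1)N)}|\leq 8(1+\sqrt2)\,2(n-2)\frac{\eta_{kN}^2}{\mu_{kN}}<40(n-2)\frac{\eta_{kN}^2}{\mu_{kN}}.$$
Now use $\mu_{kN}\geq\mu_0/2$, which gives $|a_{ij}^{((k+1)N)}|\leq 80(n-2)\eta_{kN}^2/\mu_0\leq C\eta_{kN}^2$. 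Taking the maximum over $(i,j)$ yields $\eta_{(k+1)N}\leq C\eta_{kN}^2$.

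For the off-norm, the matrix $A^{((k+1)N)}$ has $n(n-1)=2N$ off-diagonal entries, so $\off(A^{((k+1)N)})\leq\sqrt{2N}\,\eta_{(k+1)N}$. Since $\eta_{kN}$ is a single off-diagonal modulus, $\eta_{kN}\leq\off(A^{(kN)})$. Combining these two facts with the previous step gives $\off(A^{((k+1)N)})\leq\sqrt{2N}\,C\,\eta_{kN}^2\leq C\sqrt{2N}\off^2(A^{(kN)})$, which is the second claim.

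The main obstacle is bookkeeping rather than new estimates. One must check that each entry is changed at most $2(n-2)$ times after its annihilation, and that the constant $8(1+\sqrt2)\cdot2\approx 38.6$ indeed stays below $40$. One must also make sure the inner-cycle bounds \eqref{eq:pom_p} are quoted correctly, because they were established inside the proof of Lemma~\ref{tm:convpom} rather than in its statement. If needed, I would restate them as an explicit consequence of that proof before using them.
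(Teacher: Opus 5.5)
Your proposal is correct and takes essentially the same route as the paper. It invokes Lemma~\ref{tm:convpom} together with the in-cycle bounds \eqref{eq:pom_p} from its proof, repeats the estimate \eqref{eq:aij_pivot} to get $|a_{ij}^{((k+1)N)}|<40(n-2)\eta_{kN}^2/\mu_{kN}$, uses $\mu_{kN}\geq\mu_0/2$, and passes to the off-norm via $\eta_k\leq\off(A^{(k)})\leq\sqrt{2N}\,\eta_k$. The only cosmetic difference is that you carry the factor $n-2$ to the end, whereas the paper relaxes it to $n-1$ immediately.
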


\begin{proof}
Assumptions of the theorem are the same as the assumptions of Lemma~\ref{tm:convpom}. By repeating the calculation done in~\eqref{eq:aij_pivot} and~\eqref{eq:aij}, and using the bound~\eqref{eq:pom_mukN}, we get
$$|a_{ij}^{(k+1)N}| \leq 40(n-1)\frac{\eta_{kN}^2}{\mu_{kN}}\leq80(n-1)\frac{\eta_{kN}^2}{\mu_0}.$$
Therefore, 
\begin{equation}\label{eq:tmquad}
\eta_{(k+1)N}\leq\frac{80(n-1)}{\mu_0}\eta_{kN}^2=C\eta_{kN}^2,
\end{equation}
for $C$ as in the statement of the theorem.

Definition of the off-norm yields the bounds
$$\eta_k^2\leq\off^2\left(A^{(k)}\right)\leq2N\eta_k^2.$$
Combining this with~\eqref{eq:tmquad}, we obtain
$$\off\left(A^{((k+1)N)}\right) \leq \sqrt{2N}\eta_{(k+1)N} \leq C\sqrt{2N}\eta_{kN}^2 \leq C\sqrt{2N}\off^2\left(A^{(kN)}\right).$$
\end{proof}

\subsection{Almost triangular matrices}\label{sec:triangular}

Note that the results for almost diagonal matrices hold for any cyclic pivot strategy. For the (almost) triangular matrices we specifically observe the row-wise pivot strategy~\eqref{row-wise}. 

Let $A$ be an upper-triangular matrix with simple eigenvalues, thus diagonalizable. In the next theorem we prove that the Algorithm~\ref{agm:Jacobi} under the row-wise pivot strategy diagonalizes $A$ in only one cycle.

\begin{Theorem}\label{tm:triangular}
Let $A\in\C^{n\times n}$, $n\geq 3$, be an upper-triangular matrix with simple eigenvalues and let $A^{(N)}$, $N=n(n-1)/2$, be a matrix obtained from $A$ after one cycle of the Algorithm~\ref{agm:Jacobi} under the row-wise pivot strategy. Then, $A^{(N)}$ is a diagonal matrix.
\end{Theorem}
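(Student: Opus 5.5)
The plan is to show that, for an upper-triangular input and the row-wise ordering~\eqref{row-wise}, every step of the cycle reduces to a single $T_k$-transformation with $y_k=0$. Such a step preserves upper-triangularity and the diagonal. Moreover, it destroys no zero created earlier in the cycle. The proof is an induction over the $N$ steps of the cycle.

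\emph{Invariant.} Let $(p,q)$ be the $(k+1)$th pivot pair of the row-wise cycle. I would prove by induction on $k$ that $A^{(k)}$ satisfies:
\begin{enumerate}
\item[(a)] $A^{(k)}$ is upper-triangular;
\item[(b)] its diagonal equals that of $A$, so its diagonal entries are the distinct eigenvalues $\lambda_i^*$;
\item[(c)] $a_{ij}^{(k)}=0$ for every pair $(i,j)$, $i<j$, that precedes $(p,q)$ in~\eqref{row-wise}. That is, all off-diagonal entries in rows $1,\dots,p-1$ vanish, and so do the entries $a_{pj}^{(k)}$ with $p<j<q$.
\end{enumerate}
For $k=0$ this is just the hypothesis on $A$.

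\emph{Structure of one step.} By (a), $a_{qp}^{(k)}=0$. By (b), $a_{pp}^{(k)}\neq a_{qq}^{(k)}$, so no skip branch of Algorithm~\ref{agm:Jacobi} for a nondiagonalizable submatrix is triggered. If $a_{pq}^{(k)}=0$, the pair is skipped and nothing changes. Otherwise the linear case~\eqref{eq:xlin} applies, giving $x_k=a_{pq}^{(k)}/(a_{qq}^{(k)}-a_{pp}^{(k)})$ and $y_k=0$, so $S_k=I$. By~\eqref{elements}, the diagonal is unchanged because $x_ka_{qp}^{(k)}=0$, and $a_{pq}^{(k+1)}=0$. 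The only other entries that change are
\[
a_{pi}^{(k+1)}=a_{pi}^{(k)}-x_ka_{qi}^{(k)}, \qquad a_{iq}^{(k+1)}=a_{iq}^{(k)}+x_ka_{ip}^{(k)}, \qquad i\neq p,q,
\]
while $a_{ip}$ and $a_{qi}$ remain unchanged since $y_k=0$.

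\emph{Checking the invariant for $k+1$.}
\begin{itemize}
\item Row $p$: by (a), $a_{qi}^{(k)}=0$ for $i<q$, so only the entries $a_{pi}$ with $i>q$ can change. These lie strictly in the upper triangle and come after $(p,q)$ in the row-wise order.
\item Column $q$: by (a), $a_{ip}^{(k)}=0$ for $i>p$, so only the entries $a_{iq}$ with $i<p$ could change. For those, $a_{ip}^{(k)}=0$ by (c), because $(i,p)$ precedes $(p,q)$ in the row-wise order. Hence they do not change either.
\end{itemize}
Thus (a) and (b) persist. Property (c) extends to include $(p,q)$, whose entry has just been annihilated (or was already zero when skipped).

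\emph{Conclusion and main point of care.} After the last pair $(n-1,n)$, property (c) says that all strictly upper entries vanish. Together with (a), this shows $A^{(N)}$ is diagonal. The main point to handle carefully is the column-$q$ update for rows $i<p$. It is exactly here that the row-wise order is needed: the zeros $a_{ip}$ must already have been created before pivot $(p,q)$ is reached. A secondary point is to confirm that the skip conditions in Algorithm~\ref{agm:Jacobi} never fire nontrivially. This follows from the simplicity of the eigenvalues, which by (b) stay on the diagonal throughout the cycle.
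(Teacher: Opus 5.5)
Your proposal is correct and follows the same approach as the paper. Both arguments show that each step is a pure $T_k$-step with $y_k=0$, which preserves upper-triangularity and the diagonal. Both then use the row-wise order to guarantee that zeros created earlier in the cycle survive, in particular the $a_{ip}$, $i<p$, that enter the column-$q$ update. Your invariant (c) simply formalizes the paper's row-by-row induction, and your explicit checks of the skip branches and of the distinct diagonal entries make its implicit steps rigorous.
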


\begin{proof}
We first observe that one iteration~\eqref{Jacobi_iteration} on $A$ preserves the upper-triangular form. Assume that $(p,q)$, $p<q$, is a pivot position. Then, the pivot submatrix of $A$ is given by
$$\widehat{A}=\begin{bmatrix}
a_{pp} & a_{pq} \\
0 & a_{qq}
\end{bmatrix}.$$
Since $a_{qp}=0$, according to~\eqref{eq:xlin}, we have
\begin{equation}\label{eq:xtriangular}
x=\frac{a_{pq}}{a_{qq}-a_{pp}},
\end{equation}
and, according to~\eqref{eq:y}, $y=0$. From the relations~\eqref{elements}, for the obtained matrix $A'$ and $i\neq p,q$ we have 
$$a_{pp}'=a_{pp}, \quad a_{qq}'=a_{qq}, \quad a_{ip}'=a_{ip}, \quad a_{qi}'=a_{qi},$$
while 
$$a_{pi}'=a_{pi}-xa_{qi} \quad \text{and} \quad a_{iq}'=a_{iq}+xa_{ip}.$$
For $1\leq i<p$, we have $a_{pi}=a_{qi}=0$. Hence $a_{pi}'=0$. Similarly, for $q<i\leq n$, $a_{ip}=a_{iq}=0$, thus $a_{iq}'=0$. All other elements from the lower triangular are unchanged, so we establish that $A'$ is upper-triangular. 

Now we take the row-wise pivot strategy~\eqref{row-wise}. As we saw earlier, each iteration will keep the upper-triangular structure. Therefore, it is enough to check that the elements in the upper triangle, once they are annihilated, will stay zero. 

In the first iteration we get $a_{12}^{(1)}=0$. In the second iteration, we get $a_{13}^{(2)}=0$ and it follows from~\eqref{elements} that
$$a_{12}^{(2)}=a_{12}^{(1)}-x_1a_{32}^{(1)}=0,$$
since $a_{32}^{(1)}$ is an element from the lower triangle. Inductively, in the $j$th iteration, $1<j\leq n-1$, acting on the pivot position $(1,j+1)$, we get $a_{1,j+1}^{(j)}=0$, and, for $2\leq i\leq j$,
$$a_{1i}^{(j)}=a_{1i}^{(j-1)}-x_{j-1}a_{j+1,i}^{(j-1)}.$$ Then, because $a_{1i}^{(j-1)}=0$ and $(j+1,i)$ is a position in the lower triangle, it follows that $a_{1i}^{(j)}=0$. Therefore, after the first $n-1$ iterations, all off-diagonal elements in the first row are equal to zero. Following the same reasoning, we can conclude that the transformation annihilating the $(p_k,q_k)$-element, keeps the zeros on positions $(p_k,i)$, $p_k<i<q_k$, that is, to the left from $(p_k,q_k)$. 

Now, consider the $n$th iteration. It produces $a_{23}^{(n)}=0$ and we have
$$a_{12}^{(n)}=a_{12}^{(n-1)}=0$$
and
$$a_{13}^{(n)}=a_{13}^{(n-1)}+x_{n-1}a_{12}^{(n-1)}=0,$$
since both positions $(1,2)$ and $(1,3)$ are in the first row, which is already annihilated. In the same way it follows that all transformations acting on the second row keep the zeros from the first row. Inductively, transformation acting on position $(p_k,q_k)$ preserves the zeros in all upper rows $i$, $1\leq i<p_k$. 

In conclusion, under the row-wise strategy, once an element is annihilated, it remains zero until the end of the cycle, which implies that $A^{(N)}$ is diagonal. 
\end{proof}

Note that Theorem~\ref{tm:triangular} does not only hold for the row-wise pivot strategy. It also holds for several other strategies, e.g., for the column-wise pivot strategy, but not for an arbitrary cyclic pivot strategy.

We use Theorem~\ref{tm:triangular} to show that Algorithm~\ref{agm:Jacobi} converges on almost triangular matrices. Specifically, Theorem~\ref{tm:almosttriang} shows that, if $A$ is close enough to the upper-triangular form, matrix $A^{(N)}$ obtained after only one cycle of the naive Jacobi satisfies the conditions of Theorem~\ref{tm:conv}. Therefore, the naive Jacobi converges quadratically on $A^{(N)}$.

\begin{Theorem}\label{tm:almosttriang}
Let $A\in\C^{n\times n}$, $n\geq 3$ and  $A=B+L$, where $B$ is upper-triangular matrix with simple eigenvalues and $L$ is a strictly lower-triangular matrix. Let $A^{(N)}$, $N=n(n-1)/2$, be a matrix obtained from $A$ after one cycle of Algorithm~\ref{agm:Jacobi} under the row-wise pivot strategy. Set 
$$\mu_B\coloneqq\min_{i\neq j}|b_{ii}-b_{jj}|>0$$
and
$$\ell\coloneqq \max_{i>j}|a_{ij}|=\|L\|_{\max}.$$
Then there is a constant $c(B)$, depending only on $B$, such that 
\begin{equation}\label{eq:tmtriangassumption}
\ell<c(B)
\end{equation}
implies 
$$80(n-1)\frac{\eta_N}{\mu_N}\leq1.$$
That is, the matrix $A^{(N)}$ satisfies the conditions of Theorem~\ref{tm:conv}.
\end{Theorem}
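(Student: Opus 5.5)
A continuity/perturbation argument around Theorem~\ref{tm:triangular} seems the natural route. View one cycle of Algorithm~\ref{agm:Jacobi} under the row-wise strategy as a map $\Phi$ from the starting matrix to $A^{(N)}$, i.e., a composition of $N$ elementary steps. For $L=0$, Theorem~\ref{tm:triangular} gives that $\Phi(B)$ is diagonal, with $\eta_N=0$ and diagonal entries $b_{11},\dots,b_{nn}$. Along the exact cycle on $B$ every pivot submatrix has $a_{qp}=0$, so $y=0$ and $x$ is given by~\eqref{eq:xtriangular}. The diagonal is never changed, so each pivot denominator has modulus at least $\mu_B$. Consequently every $x$ is bounded by $\|B\|_{\max}$-type quantities over $\mu_B$, and the intermediate matrices $B^{(k)}$ have entries bounded by a constant $M(B)$.

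The plan is to propagate a perturbation of size $\ell$ through the $N$ steps. Let $E^{(k)}=A^{(k)}-B^{(k)}$, with $\varepsilon_k=\|E^{(k)}\|_{\max}$ and $\varepsilon_0=\ell$. At step $k$ the pivot submatrix of $A^{(k)}$ is a perturbation of order $\varepsilon_k$ of an upper-triangular $2\times2$ block with diagonal separation at least $\mu_B-2\varepsilon_k\ge\mu_B/2$. The root $x$ of smaller modulus of~\eqref{xeq} depends smoothly on the coefficients near the configuration $a_{21}=0$, $a_{22}\ne a_{11}$: it is the branch continuing~\eqref{eq:xlin}, and formula~\eqref{eq:ximplementation} with the larger denominator makes this explicit. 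Hence $|x_A-x_B|\le K_1\varepsilon_k$. Similarly $y_A=\mp a_{qp}/s$ with $|s|\ge\mu_B/\sqrt2$, as in the proof of Lemma~\ref{tm:xy}, gives $|y_A|\le K_2\varepsilon_k$. Inserting these into the update formulas~\eqref{elements}, which are polynomial in $x,y$ and the entries, yields $\varepsilon_{k+1}\le K\varepsilon_k$ with $K$ depending only on $B$ (through $n$, $M(B)$ and $\mu_B$), provided $\varepsilon_k$ stays below a threshold $\delta(B)$. By induction over the $N$ steps we get $\varepsilon_N\le K^N\ell$, as long as $K^N\ell\le\delta(B)$.

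To conclude, note that $\Phi(B)$ is diagonal. Then $\eta_N\le\varepsilon_N\le K^N\ell$, and since the diagonals differ by at most $\varepsilon_N$, $\mu_N\ge\mu_B-2K^N\ell$. Choosing
\[
c(B)=\min\Bigl\{\frac{\delta(B)}{K^N},\ \frac{\mu_B}{4K^N},\ \frac{\mu_B}{160(n-1)K^N}\Bigr\}
\]
gives $\mu_N\ge\mu_B/2$ and $80(n-1)\eta_N/\mu_N\le 160(n-1)K^N\ell/\mu_B\le1$, as required.

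The main obstacle is the step-wise Lipschitz estimate. We must ensure that the algorithm's branch choices (nonzero $a_{qp}$ versus the linear formula, and the choice of $\sigma$) do not create discontinuities, and that the unused special cases never trigger. I would handle this by bounding $x$ and $y$ directly through~\eqref{eq:ximplementation} and~\eqref{eq:yimplementation}. Each branch is a small perturbation of~\eqref{eq:xtriangular}: when $a_{qp}\to0$, $s\to\pm d$, and the selected denominator $d+\sigma s\to 2d$. This gives the uniform bounds on $|x_A-x_B|$ and $|y_A|$ regardless of the branch taken.
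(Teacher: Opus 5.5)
Your proposal is correct and follows essentially the same route as the paper. Both arguments view one row-wise cycle as a composition of $N$ pivot maps, use Theorem~\ref{tm:triangular} to get that $\Phi(B)$ is diagonal with the diagonal of $B$, control $E=A^{(N)}-\Phi(B)$ for small $\ell$, and conclude from $\eta_N\le\|E\|_{\max}$ and $\mu_N\ge\mu_B-2\|E\|_{\max}$. The only difference is that the paper invokes plain continuity of the cycle map at $B$, checking just the continuity of $x$; you instead give a stepwise Lipschitz bound $\varepsilon_N\le K^N\ell$ and explicitly check $y$ and the algorithm's branch choices, which makes your version somewhat more complete and $c(B)$ more explicit.
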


\begin{proof}
We have 
$$A^{(N)}=V^{-1}AV=V^{-1}(B+L)V, \quad \text{where} \ V=T_0S_0T_1S_1\cdots  T_{N-1}S_{N-1}.$$
Because $B$ is upper-triangular, there is a transformation $W=T'_0S'_0T'_1S'_1\cdots  T'_{N-1}S'_{N-1}$ such that
$$W^{-1}BW=D$$
is a diagonal matrix. Hence, we can write
\begin{equation}\label{eq:D+E}
A^{(N)}=W^{-1}BW+V^{-1}(B+L)V-W^{-1}BW=D+E,
\end{equation}
where
$$E=V^{-1}(B+L)V-W^{-1}BW.$$

Take the function $\phi(B)\coloneqq B^{(N)}$ that represents one row-wise cycle of the Algorithm~\ref{agm:Jacobi} on the neighborhood of a triangular matrix $B$ with distinct diagonal elements. We claim that $\phi$ is continuous on $B$. One cycle is a finite composition of elementary pivot maps, so it is enough to check the continuity of one iteration. For $B$ and pivot pair $(p,q)$, the transformation parameter $x$ is calculated by the formula~\eqref{eq:xtriangular}. For a matrix close to $B$, we have
$$\begin{bmatrix}
a_{pp} & a_{pq} \\
\gamma & a_{qq}
\end{bmatrix},
\quad a_{pp}\neq a_{qq}.$$
Then, $x$ is determined by
$$\gamma x^2+(a_{qq}-a_{pp})x-a_{pq}=0.$$
Such $x$ depends continuously on the entries of the pivot submatrix and, when $\gamma\rightarrow0$, it tends to~\eqref{eq:xtriangular}. Thus, $\phi$ is continuous at $B$.

Therefore, for every $\epsilon>0$ there is $\delta_B=\delta_B(\epsilon)>0$ such that 
$$\|L\|_{\max}<\delta_B \ \Rightarrow \ \|\phi(B+L)-\phi(B)\|_{\max}=\|V^{-1}(B+L)V-W^{-1}BW\|_{\max}=\|E\|_{\max}<\epsilon.$$
In particular, for $C_n=80(n-1)$, there is $\delta_B>0$ such that 
\begin{equation}\label{eq:Emax}
\ell<\delta_B \ \Rightarrow \ \|E\|_{\max}<\frac{\mu_B}{C_n+2}.
\end{equation}
Therefore, $\delta_B$ depends only on $B$. Moreover, from the relation~\eqref{eq:D+E}, for $\eta_N$ defined as in~\eqref{eta_k}, we have \begin{equation}\label{eq:etaE}
\eta_N=\|E-\mathrm{diag}(E)\|_{\max}\leq\|E\|_{\max}\substack{\eqref{eq:Emax}\\ <}\frac{\mu_B}{C_n+2}.
\end{equation}
Since the diagonal entries of $D$ are equal to the diagonal entries of $B$, relation~\eqref{eq:D+E} also implies
$$a_{ii}^{(N)}-a_{jj}^{(N)}=b_{ii}-b_{jj}+e_{ii}-e_{jj}.$$
Then, by the reverse triangle inequality,
$$|a_{ii}^{(N)}-a_{jj}^{(N)}|\geq|b_{ii}-b_{jj}|-|e_{ii}|-|e_{jj}|.$$
Taking the minimum over $i\neq j$ and considering $\delta_B$ from~\eqref{eq:Emax} and $\mu_N$ as in~\eqref{mu_k}, we obtain
\begin{equation}\label{eq:Epom}
\mu_N\geq \mu_B-2\|E\|_{\max} \substack{\eqref{eq:Emax}\\ \geq} \mu_B-\frac{2\mu_B}{C_n+2}=\mu_B\frac{C_n}{C_n+2}.
\end{equation}
It follows that
$$C_n\frac{\eta_N}{\mu_N} \substack{\eqref{eq:Epom} \eqref{eq:etaE}\\ <} \frac{C_n+2}{\mu_B}\frac{\mu_B}{C_n+2}=1,$$
that is, 
\begin{equation}\label{eq:tmassumtionN}
80(n-1)\frac{\eta_N}{\mu_N}<1.
\end{equation}

Finally, we set
$c(B)=\delta_B.$
Then, the assumption~\eqref{eq:tmtriangassumption} implies $\ell<\delta_B$, that is, inequality~\eqref{eq:Emax} and, consequently, \eqref{eq:tmassumtionN} hold for $c(B)$.
\end{proof}

\section{Numerical examples without preconditioning}\label{sec:num_1}

In this section, we give several numerical examples where we test the Algorithm~\ref{agm:Jacobi}. The experiments were performed in MATLAB R2026a. The naive Jacobi algorithm is implemented under the row-wise pivot strategy with the stopping criterion
\begin{equation}\label{stop}
\text{if }\off\left(A^{(kN)}\right)<\mathrm{tol}, \text{then stop the algorithm.}
\end{equation}
We use $\mathrm{tol}=2.220\cdot 10^{-16}$,  MATLAB's double precision machine epsilon. The codes used to produce the results presented here, as well as in Section~\ref{sec:num_mp}, can be found at
https://github.com/Marija-Miloloza-Pandur/Naive-Jacobi.

\subsection{Matrices satisfying the quadratic convergence condition}

For our first numerical example, we generate the complex matrices $A$ such that the assumption~\eqref{eq:tmassumption} holds for $A$: 
\begin{verbatim}
    eta=0.01;
    mu=80*(n-1)*eta;
    phases=2*pi*rand(n,n);
    A=eta*exp(1i*phases);
    diag_elements=(1:n)*mu;
    A=A-diag(diag(A))+diag(diag_elements);
\end{verbatim}
In Figure~\ref{fig:example1} we see how the off-norm changes for different $n$, which is in line with Theorem~\ref{tm:conv}. The tested examples converged in three cycles. In Table~\ref{table:residual}  we report the computed relative residual $\|AV-V\Lambda\|_F/\|A\|_F$ in the Frobenius norm.

\begin{figure}[ht]
\includegraphics[width=.45\linewidth]{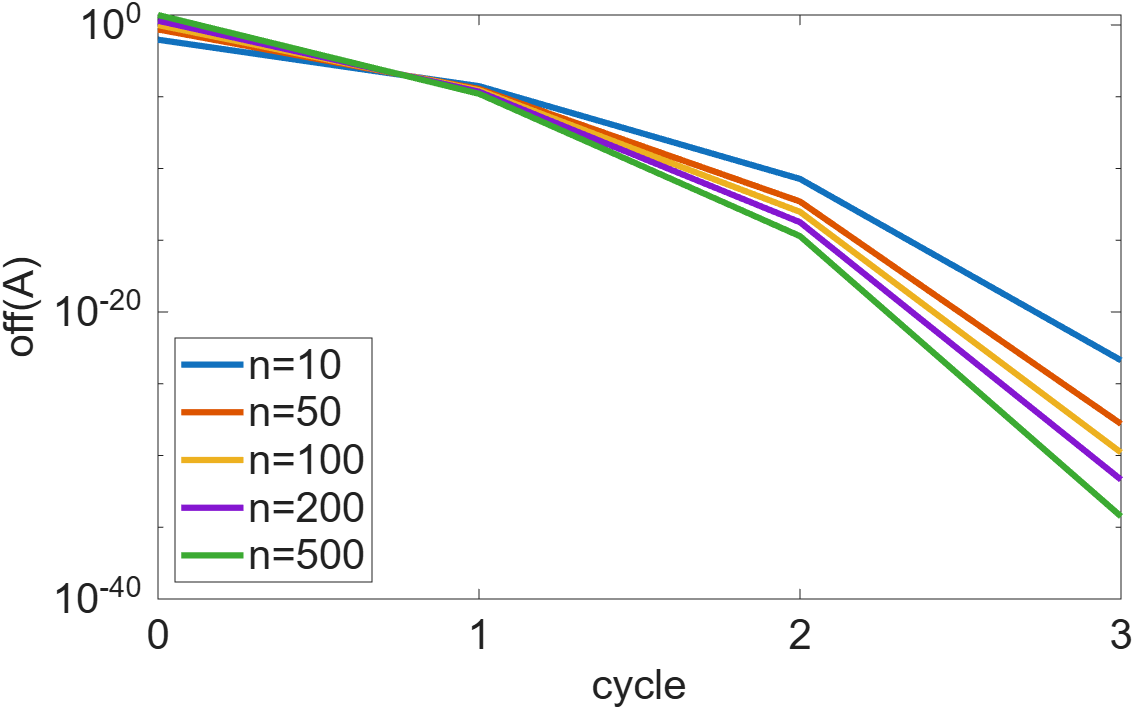}
\caption{Quadratic convergence of the naive Jacobi method}\label{fig:example1}
\end{figure}

\begin{table}[ht]
\centering
\begin{tabular}{cccccc}
\toprule
     & $n=10$ & $n=50$ & $n=100$ & $n=200$ & $n=500$ \\
\midrule
    $\|AV-V\Lambda\|_F/\|A\|_F$ & $1.1\cdot 10^{-16}$ & $1.7\cdot 10^{-16}$ & $2.1\cdot 10^{-16}$ & $3.4\cdot 10^{-16}$ & $3.8\cdot10^{-16}$ \\
    \bottomrule
\end{tabular}
\caption{The computed relative residual}\label{table:residual}
\end{table}

Furthermore, to assess the accuracy of the method,  we compare the naive Jacobi algorithm to the MATLAB \texttt{eig} function and the Eberlein algorithm~\cite{Eberlein62} with the stopping criterion~\eqref{stop}. For exact eigenvalues $\lambda_i^*$, $i=1,\ldots,n$, we take the eigenvalues computed to quadruple precision using the Advanpix Multiprecision Computing Toolbox~\cite{advanpix}. We compare the maximum and mean relative errors for the three methods for different matrix sizes $n$, $20\leq n\leq200$. In Figure~\ref{fig:example1acc} we see that the naive Jacobi outperforms the other two options by the order of magnitude.

\begin{figure}[ht]
\includegraphics[width=.45\linewidth]{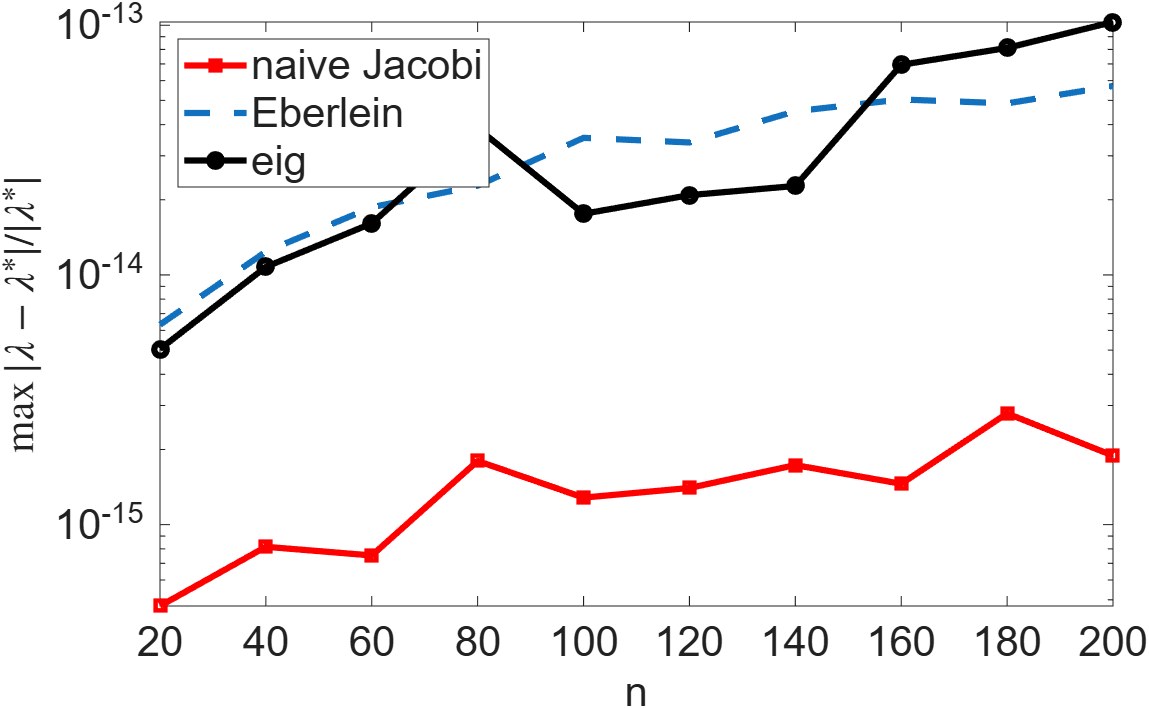}
\includegraphics[width=.45\linewidth]{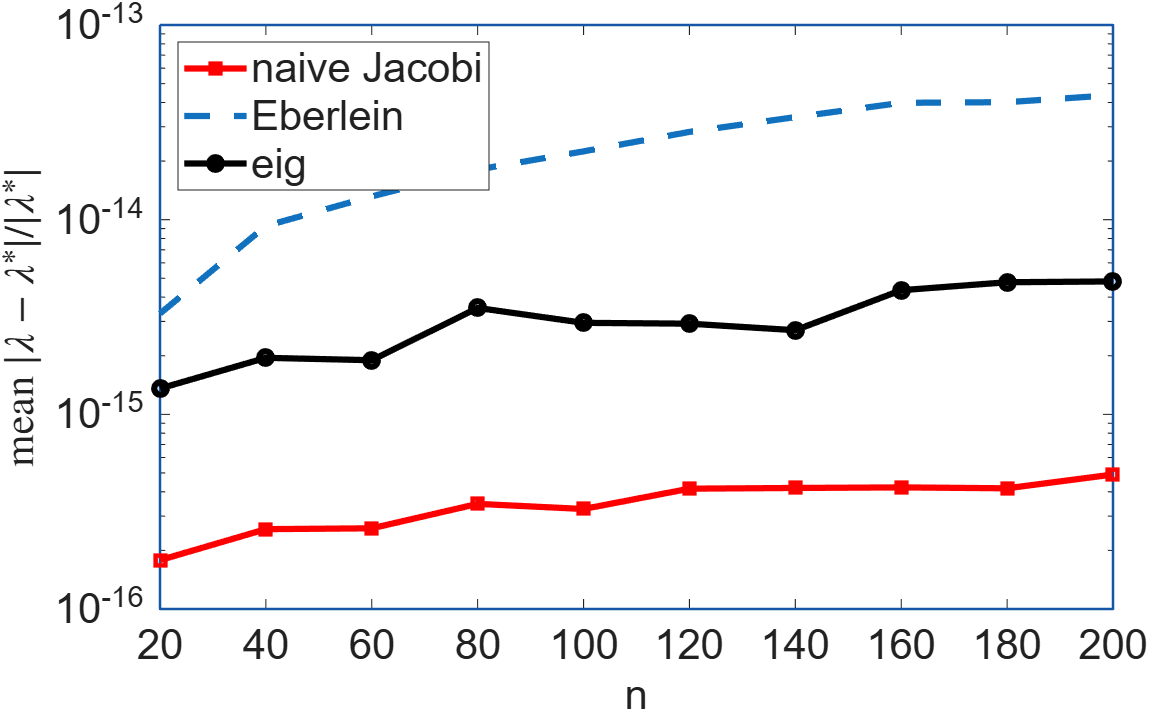}
\caption{Maximal (left)  and mean (right)  relative error of the naive Jacobi method}\label{fig:example1acc}
\end{figure}

Next, we form a Hermitian matrix $B=(A+A^*)/2$. The diagonal entries of $B$ are the same as the diagonal entries of $A$ and the largest off-diagonal element of $B$ is not larger than the largest off-diagonal element of $A$. Thus, assumption~\eqref{eq:tmassumption} holds for $B$. We apply on $B$ the naive Jacobi algorithm and compare it with the MATLAB \texttt{eig}, as well as the Jacobi algorithm for Hermitian matrices~\cite{BeHa21} with the stopping criterion~\eqref{stop}, for $20\leq n\leq 200$. The results are given in Figure~\ref{fig:example1accH}. Naive Jacobi gives the same results as the standard Jacobi, significantly more accurate than \texttt{eig}. 

\begin{figure}[ht]
\includegraphics[width=.45\linewidth]{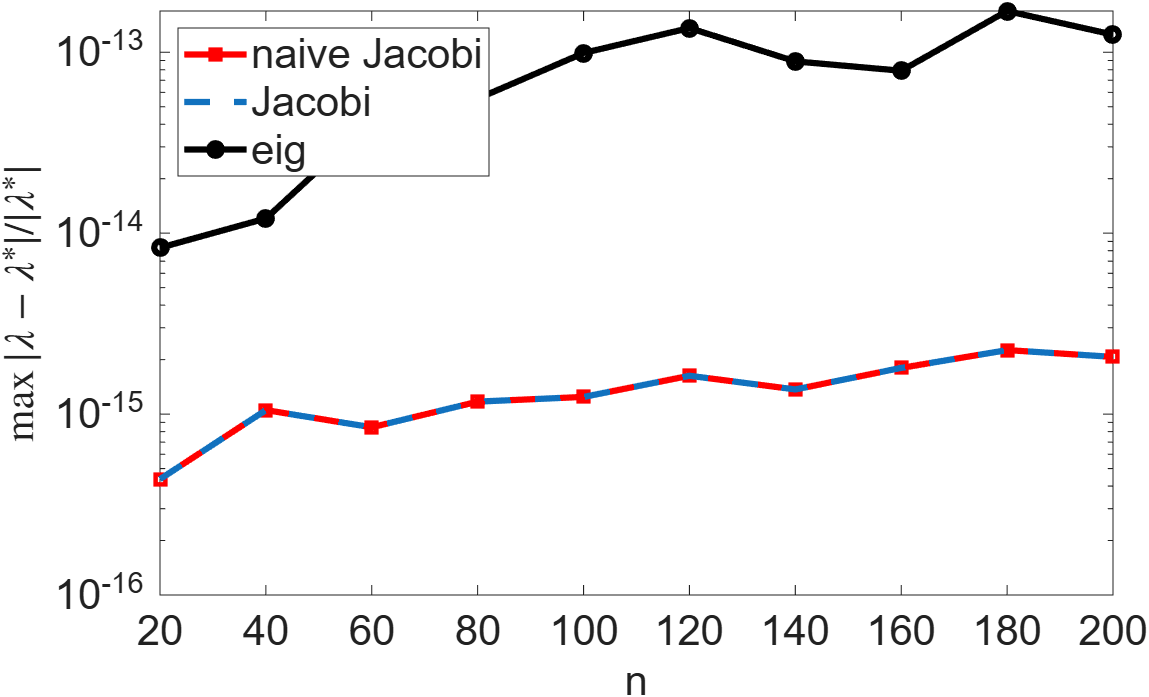}
\includegraphics[width=.45\linewidth]{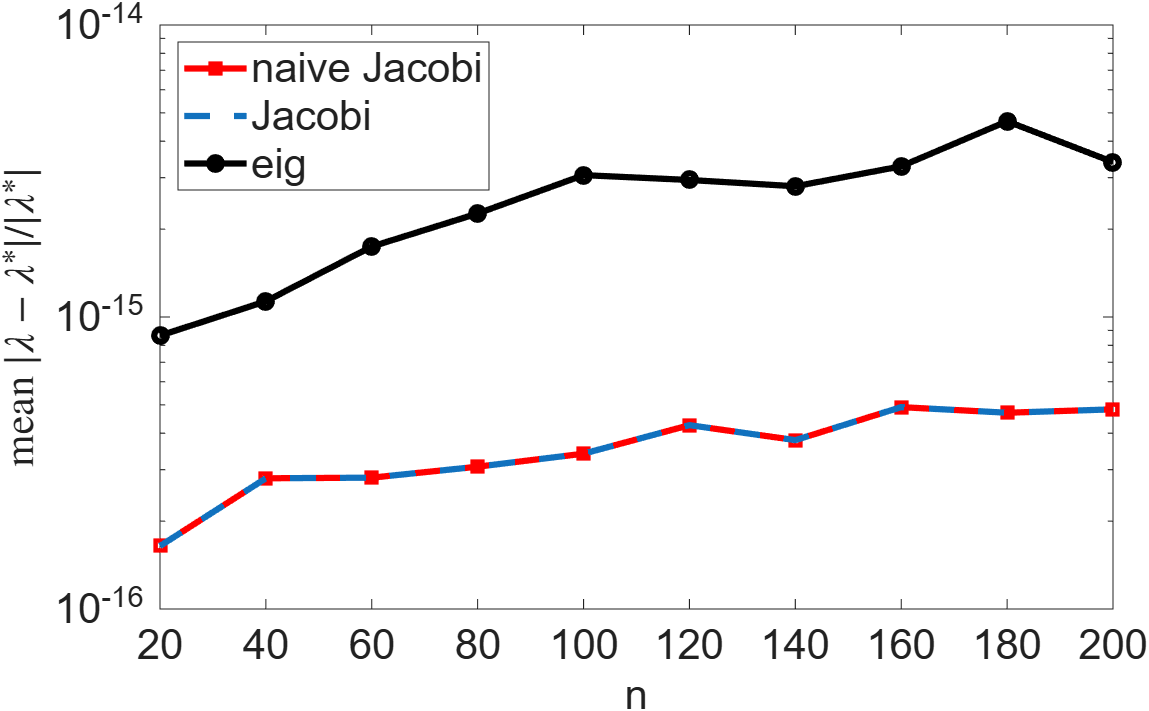}
\caption{Maximal (left) and mean (right)  relative error of the naive Jacobi method on Hermitian matrices}\label{fig:example1accH}
\end{figure}

Matrices satisfying the assumption~\eqref{eq:tmassumption} are very close to the diagonal form, which makes the conditions of Theorem~\ref{tm:conv} overly restrictive. However, as we are about to see in the rest of this section, the algorithm converges in many cases where~\eqref{eq:tmassumption} does not hold.

\subsection{$4\times4$ matrices}

We examine random complex $4\times4$ matrices. For $n=4$, assumption~\eqref{eq:tmassumption} holds if $\frac{\eta_0}{\mu_0}\leq\frac{1}{240}$. We ran the Algorithm~\ref{agm:Jacobi} on $1000$ matrices formed as
\begin{verbatim}
A=M*diag(v)*inv(M);
\end{verbatim}
where $v$ is a vector and $M$ is a matrix, both having complex values. This way we know the exact eigenvalues of $A$.
To make our results reproducible, we use the command $\texttt{rng(1)}$ to control the random number generator. The algorithm converged for all matrices, although the value $\frac{\eta_0}{\mu_0}$ was between $0.7$ and $51$, depending on a matrix, but never close to $\frac{1}{240}$. 

In addition to the convergence properties, we tested the high relative accuracy of the naive Jacobi method. We compared our results to those obtained by the MATLAB \texttt{eig} function. Specifically, we tested maximal relative error 
$$\text{err}_{\max}=\max_i\{|\lambda_i-\lambda_i^*|/|\lambda_i^*|\},$$
as well as the maximal error of the real and imaginary part,
\begin{align*}
\text{err}_{\max\text{Re}} & =\max_i\{|\text{Re(}\lambda_i)-\text{Re}(\lambda_i^*)|/|\text{Re}(\lambda_i^*)|\}, \\
\text{err}_{\max\text{Im}} & =\max_i\{|\text{Im(}\lambda_i)-\text{Im}(\lambda_i^*)|/|\text{Im}(\lambda_i^*)|\},
\end{align*}
where $\lambda_i$ is the computed and $\lambda_i^*$ is the exact eigenvalue.
In Table~\ref{table:4x4} we can see that the naive Jacobi exceeded \texttt{eig} in $81.4\%$ of the cases.

\begin{table}[ht]
\centering
\begin{tabular}{cccc}
\toprule
 & $\text{err}_{\max}$ & $\text{err}_{\max\text{Re}}$ & $\text{err}_{\max\text{Im}}$ \\
\midrule
naive Jacobi & \textbf{814} & \textbf{739} & \textbf{734} \\
\texttt{eig} & 186 & 261 & 266 \\
\bottomrule
\end{tabular}
\caption{How many times each algorithm outperformed the other one on a sample of $1000$ random complex $4\times4$ matrices.}\label{table:4x4}
\end{table}

Here it is important to notice that, although the algorithm converged for all tested random matrices, it does not really converge for all $4\times4$ matrices. Take
$$A=\begin{bmatrix}
0 & 1 & 0 & 0 \\ 
0 & 0 & 1 & 0 \\ 
0 & 0 & 0 & 1 \\ 
1 & 0 & 0 & 0 \\ 
\end{bmatrix}.$$
This is a diagonalizable matrix with simple eigenvalues. However, the Algorithm~\ref{agm:Jacobi} fails to diagonalize it since all nontrivial pivot submatrices are not diagonalizable. This issue can be solved by an initial preconditioning by a similarity transformation that perturbs the entries of $A$.

\subsection{$\alpha$-scaled diagonal dominant matrices}

In this section we recall the notion  of one specific type of almost diagonal matrices: scaled diagonally dominant matrices ~\cite{BarlowDemmel1990}. 
Let $A_S=\Omega+Z\in\C^{n\times n}$, where $\Omega$ is diagonal and $Z$  has a zero diagonal ($Z$ is the off-diagonal part of $A_S$) and $|\Omega_{ii}|=1$, $i=1,\ldots,n.$ Let $D_1$ and $D_2$ be arbitrary nonsingular diagonal matrices of order~$n$. A~complex square matrix  $A=D_1A_SD_2$ \textbf{is called $\alpha$-scaled diagonally dominant with respect to a given matrix norm} $\vert\vert\cdot\vert\vert$ if  $$ \off_{\vert\vert}(A_S):=\vert\vert Z\vert\vert\leq \alpha, \quad 0\leq \alpha<1.$$ Note that an $\alpha$-scaled diagonally dominant matrix has nonzero diagonal elements.

Let~$A\in\C^{n\times n}$ be a complex matrix with the non-zero diagonal elements, then we can define
\begin{equation}\label{eq: As}
    A_S:=D^{-1}AD^{-1},\quad D =|diag(A)|^{1/2},
\end{equation}
where the modulus and square root are taken entrywise on the diagonal. Therefore, $|(A_{S})_{ii}|= 1$ for all~$i.$ If $\off_{\vert\vert}(A_S)\leq \alpha<1$, then $A=DA_SD$ is $\alpha$-scaled diagonally dominant. Since, $\alpha<1$, a matrix $A_S$, and consequently $A$ must be nonsingular (when $\vert\vert\cdot\vert\vert$ is induced  norm). 

 In addition to the off-norm $\off(A^{(kN)})$, we also monitor the scaled off-norm $\off(A_S^{(kN)})$ for $A_S$ from~\eqref{eq: As}, after each  cycle in our example. 
  Naive Jacobi may converge on $\alpha$-scaled diagonally dominant matrices with several equal diagonal elements, even   in the case with multiple eigenvalues (see the second type of matrices in the next example). We note that the naive Jacobi method does not converge for all $\alpha$-scaled diagonally dominant matrices.


In this example we form $\alpha$-scaled diagonally dominant matrices  for several different values of~$\alpha$, with the following MATLAB code:
\begin{verbatim}
    A = 100*randn(n)+100*1i*randn(n);
    A_off = A - diag(diag(A));
    off_norm = norm(A_off, 'fro');
    if off_norm >= alpha
        A_off = (alpha / (off_norm + eps)) * A_off;
    end
    d=randn(n,1)+randn(n,1)*1i;
    d=d./abs(d);
    As = A_off + diag(d);
\end{verbatim}  

For the first type we use  $n=100$ and $D_{jj}=5$, for $j=1,\ldots,n$, and form $A=DA_SD.$ 
The condition number of $A$ is from $1.03$ to $1.27$ for $\alpha=0.1,0.3,0.5,0.7,0.9.$
For example, for $\alpha=0.5$, the orders of magnitude of the off-norm $\off\left(A^{(kN)}\right)$ and the scaled off-norm $\off\left(A_S^{(kN)}\right)$ after each cycle (with the first value corresponding to the initial matrix $A=A^{(0)}$) are as follows:
\begin{center}
\begin{tabular}{cr}
  off-norm & $10^1, 10^{0},10^{-1}, 10^{-3}, \mathbf{10^{-8}, 10^{-17}}$ \\
scaled off-norm  & $10^{-1}, 10^{-1},10^{-2}, 10^{-5}, 10^{-9}, 10^{-18}.$ \\ 
\end{tabular}
\end{center}
  The relation~\eqref{eq:tmassumption} holds after the fourth cycle, and after that, we have the quadratic reduction of the off-norm with the constant $C(n,\mu_{4N})=\frac{80(n-1)}{\mu_{4N}}\sqrt{2N}\approx 6.1\cdot 10^{6}$, for $N=n(n-1)/2$.

For the second type, we form a matrix $D$ of order $70$ having diagonal elements from $10^{-3}$ to $10^3$, and then form $A$ as $DA_SD$. To have multiple eigenvalues,  we form the block-diagonal  matrix $B=\mathrm{diag}(A,A,A)$, for the matrix $A$ 
and apply the naive Jacobi algorithm to the matrix $B$ of order~$210$. The condition number of $B$ is of order $10^{12}$.

 Figure~\ref{fig: asdd diff alpha}
 contains the off-norm and the scaled off-norm after each cycle: for the first, the second type,  the norms are in subfigures (A), (B), respectively. The norms are monotonically decreasing, and for the first type  exhibit very similar behavior. The algorithm has converged within 3--6 cycles in all cases. As expected, the algorithm converges the fastest for the smallest $\alpha=0.1$.

\begin{figure}[ht]
\centering
\begin{subfigure}{0.45\textwidth}
\includegraphics[width=\textwidth]{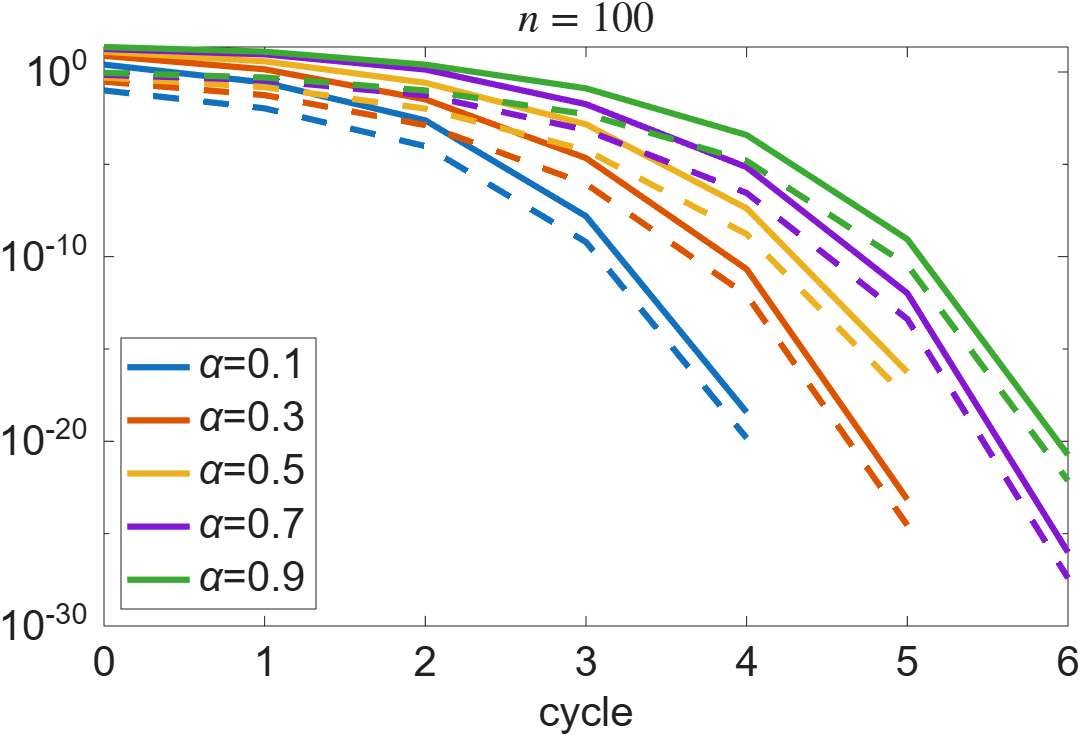}
\caption{Simple eigenvalues, $D_{ii}=5$}
\end{subfigure}
\begin{subfigure}{0.45\textwidth}
\includegraphics[width=\textwidth]{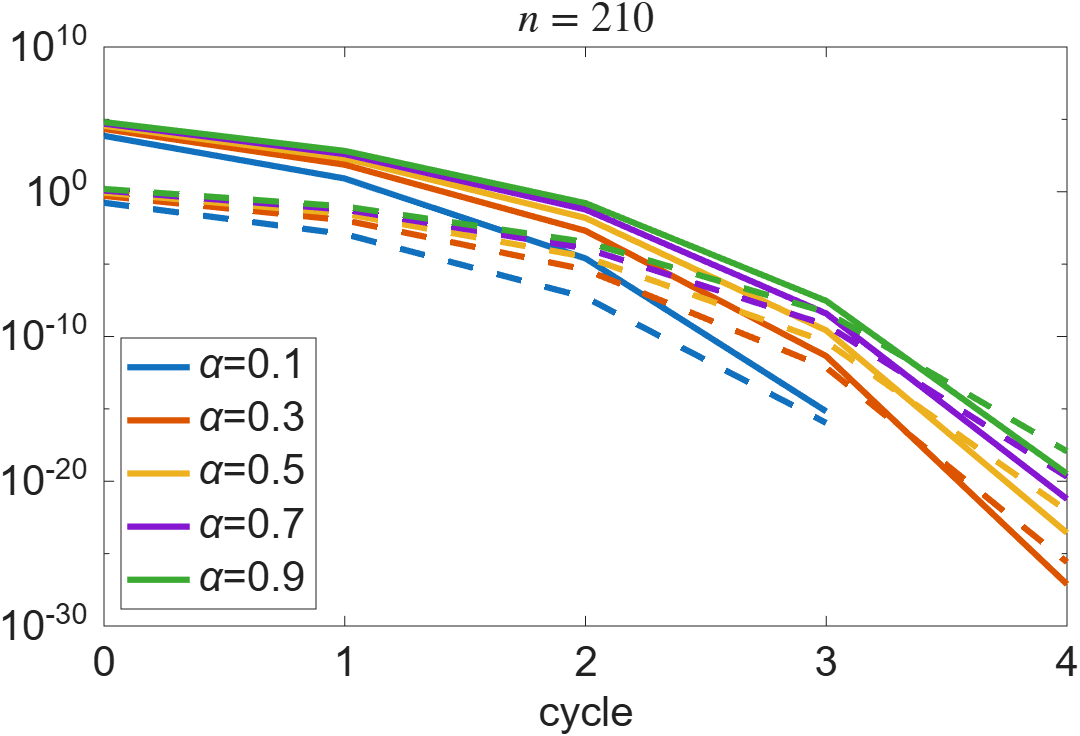}
\caption{Multiple eigenvalues, $D_{ii}\in [10^{-3},10^3]$} 
\end{subfigure}
\caption{Convergence of the off-norm (solid line) and the scaled off-norm (dashed line) on $\alpha$-scaled diagonally dominant matrices}\label{fig: asdd diff alpha}
\end{figure}
In  Table~\ref{table:asdd}, we give the computed relative residuals. For simple eigenvalues, the computed relative residual is of order $10^{-16}$, the double precision machine epsilon.  For multiple eigenvalues and $\alpha\neq 0.1$, the residual is even smaller.

\begin{table}[ht]
\centering
\begin{tabular}{cccccc}
\toprule
     & $\alpha=0.1$ & $\alpha=0.3$ & $\alpha=0.5$ & $\alpha=0.7$ & $\alpha=0.9$\\
\midrule
    simple evs & $3.8\cdot 10^{-16}$ & $4.0\cdot 10^{-16}$ & $4.6\cdot 10^{-16}$ & $4.7\cdot 10^{-16}$ & $6.3\cdot 10^{-16}$ \\
    multiple evs & $1.3\cdot 10^{-16}$ & $6.6\cdot 10^{-17}$ & $5.5\cdot 10^{-17}$ & $7.1\cdot 10^{-17}$ & $7.3\cdot 10^{-17}$ \\
    \bottomrule
\end{tabular}
\caption{The computed relative residual for $\alpha$-scaled diagonally dominant matrices }\label{table:asdd}
\end{table}

\section{Mixed-precision algorithm}\label{sec:mp}

We saw in Section~\ref{sec:cvg} that the naive Jacobi algorithm converges on almost diagonal matrices, as well as on almost triangular matrices, with simple eigenvalues. In order to extend the practical applicability of the method to a much wider class of matrices with simple eigenvalues, we employ a preconditioning step that transforms $A$ into a suitable form
\begin{equation}\label{eq:precond}
\widetilde{A}=\widetilde{V}^{-1}A\widetilde{V}.
\end{equation}
Here, $\widetilde{V}$ is a transformation that brings $A$ closer to the form assumed by the Theorem~\ref{tm:conv} or~\ref{tm:almosttriang}. In an ideal case, $\widetilde{A}$ would satisfy the conditions of Theorem~\ref{tm:conv} or~\ref{tm:almosttriang}, but, as we will see later, it is not necessary in practice. After preconditioning, we apply Algorithm~\ref{agm:Jacobi} to $\widetilde{A}$ and get its eigenvalues, which are the same (in exact arithmetic) as the eigenvalues of $A$, along with the eigenvectors, which are obtained by taking the product of $\widetilde{V}$ and the matrix obtained by the naive Jacobi algorithm.

To make the computing process more efficient, we compute the preconditioner in a lower precision, single or half. With appropriate hardware, using lower precision unit round-off $\mathbf{u_l}$, $\mathbf{u_l}\geq \mathbf{u}$, can greatly reduce computational time. Then we apply matrix similarity~\eqref{eq:precond} either at working precision $\mathbf{u}$, or at a higher precision $0<\mathbf{u_h}\leq \mathbf{u}$. Our mixed-precision implementation adopts the following steps.
\begin{itemize}
\item[(i)] Single or half precision: Find $V_l$ such that ${V}_l^{-1}A_lV_l$, for $A_l=\operatorname{fl}_{32}(A)$ or $A_l=\operatorname{fl}_{16}(A)$, is diagonal or upper-triangular at low precision.
\item[(ii)] Double or quadruple precision: Compute $\widetilde{A}=\widetilde{V}^{-1}A\widetilde{V}$, for $\widetilde{V}=\operatorname{fl}_{64}(V_l)$ or $\widetilde{V}=\operatorname{fl}_{128}(V_l)$.
\item[(iii)] Double precision: Apply Algorithm~\ref{agm:Jacobi} to $\operatorname{fl}_{64}(\widetilde{A})$.
\end{itemize}

For the first step, we have two options, depending if we rely on Theorem~\ref{tm:conv} or~\ref{tm:almosttriang}. If we want to move $A$ closer to the diagonal form, we  use the MATLAB $\texttt{eig}$ function at precision $\mathbf{u_l}$. If we aim at the triangular form we use either the Schur decomposition computed by the MATLAB $\texttt{schur}$ function at precision $\mathbf{u_l}$,
or several iterations of the QR eigenvalue algorithm. 

\begin{algorithm}[ht]
\caption{Unitary preconditioned naive Jacobi algorithm (two precisions)}\label{agm:nJmp}
\renewcommand{\algorithmicrequire}{\textbf{Input:}}
\renewcommand{\algorithmicensure}{\textbf{Output:}}
\begin{algorithmic}
\Require ${A}\in\C^{n\times n}$ with simple eigenvalues, low and working precisions $\mathbf{u_l}$ and $\mathbf{u}$, $0<\mathbf{u}\leq\mathbf{u_l}$.
\Ensure An approximate spectral decomposition $A=V\Lambda V^{-1}$.
\State \textbf{1:} Find $V_l$ such that $V_l^{-1}AV_l$ is upper-triangular at precision  $\mathbf{u_l}$. \Comment{\texttt{schur} or QR} 
\State \textbf{2:} Set $\widetilde{A}=\widetilde{V}^{-1}A \widetilde{V}$ at precision $\mathbf{u}$.
\State \textbf{3:} Compute $\widetilde{A}=W\Lambda W^{-1}$ using Algorithm~\ref{agm:Jacobi} at precision $\mathbf{u}$.
\State \textbf{4:} Set $V= \widetilde{V}W$ at precision $\mathbf{u}$.
\end{algorithmic}
\end{algorithm} 

\begin{algorithm}[ht]
\caption{\texttt{eig}-preconditioned naive Jacobi algorithm (three precisions)}\label{agm:nJmp3eig}
\renewcommand{\algorithmicrequire}{\textbf{Input:}}
\renewcommand{\algorithmicensure}{\textbf{Output:}}
\begin{algorithmic}
\Require ${A}\in\C^{n\times n}$ with simple eigenvalues, low, working, and high  precisions $\mathbf{u_l}$, $\mathbf{u}$, $\mathbf{u_h}$, $0<\mathbf{u_h}\leq \mathbf{u}\leq \mathbf{u_l}$.
\Ensure An approximate spectral decomposition $A=V\Lambda V^{-1}$.
\State \textbf{1:} Find $V_l$ such that $V_l^{-1}AV_l$ is diagonal at precision $\mathbf{u_l}$. \Comment{\texttt{eig}}
\State \textbf{2:} Set $\widetilde{A}=\widetilde{V}^{-1}A \widetilde{V}$ at precision $\mathbf{u_h}$.
\State \textbf{3:} Compute $\widetilde{A}=W\Lambda W^{-1}$ using Algorithm~\ref{agm:Jacobi} at precision $\mathbf{u}$.
\State \textbf{4:} Set $V= \widetilde{V}W$ at precision $\mathbf{u}$.
\end{algorithmic}
\end{algorithm} 

If the Schur decomposition or iterations of the QR algorithm are used in step (i) of the Algorithm~\ref{agm:nJmp}, matrix $V_l$ is numerically unitary at precision $\mathbf{u_l}$. When converted to higher precision, $\widetilde{V}$ is not unitary, but is close to a unitary matrix, and its condition number is close to one. That makes $\widetilde{V}$ a stable preconditioner and we perform step (ii) at double precision. On the other hand, if the eigenvalue decomposition is used in (i), then, in a general case, $V_l$ is not unitary at precision $\mathbf{u_l}$. Thus, its condition number can be higher. In that case, we have observed that accuracy of the algorithm improves slightly more if step (ii) is done at quadruple precision.

We summarize the first case in Algorithm~\ref{agm:nJmp} that uses two precisions, and the second case in Algorithm~\ref{agm:nJmp3eig} with three precisions. In Algorithm~\ref{agm:nJmp} we assume the row-wise pivot strategy, while Algorithm~\ref{agm:nJmp3eig} can be used under any cyclic pivot strategy. We provide numerical experiments in the next section.

The following Propositions assure the convergence of Algorithm~\ref{agm:nJmp} and~\ref{agm:nJmp3eig}.

\begin{Proposition}\label{prop:mptriangular}
Let $A\in\mathbb C^{n\times n}$, $n\ge3$, be an input matrix of Algorithm~\ref{agm:nJmp}. After the preconditioning step, let
$$A^{(0)}=\widetilde A=B+L,$$
where $B$ is upper triangular with simple eigenvalues and $L$ is strictly lower triangular. Set
$$\ell=\|L\|_{\max}.$$
If $\ell<c(B)$, where $c(B)$ is the constant from the Theorem~\ref{tm:almosttriang}, then, after one cycle of the naive Jacobi algorithm, inequality
$$80(n-1)\frac{\eta_N}{\mu_N}\leq 1,$$
holds, for $\mu_N$ and $\eta_N$ defined  in~\eqref{mu_k} and~\eqref{eta_k}, respectively. Consequently, the naive Jacobi algorithm applied to $A^{(N)}$ converges quadratically.
\end{Proposition}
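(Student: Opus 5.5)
The proposition is essentially a direct application of Theorem~\ref{tm:almosttriang} followed by Theorem~\ref{tm:conv}. The plan is to treat the preconditioning step as producing a fixed input matrix $A^{(0)}=\widetilde A$ for Algorithm~\ref{agm:Jacobi}. Steps 3--4 of Algorithm~\ref{agm:nJmp} run the naive Jacobi iteration under the row-wise strategy, as assumed there. After that, nothing depends on how $\widetilde V$ was computed or at which precision. So the first step is to note that the sequence $A^{(k)}$ generated in step 3 is precisely the sequence considered in Section~\ref{sec:cvg}, starting from $A=\widetilde A=B+L$. Here $B$ is upper triangular with simple eigenvalues, so $\mu_B>0$, and $L$ is strictly lower triangular. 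These are exactly the hypotheses of Theorem~\ref{tm:almosttriang}.

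Second, I invoke Theorem~\ref{tm:almosttriang} with this splitting. Since $\ell=\|L\|_{\max}<c(B)$, it gives $80(n-1)\eta_N/\mu_N\le 1$; in fact its proof yields the strict inequality~\eqref{eq:tmassumtionN}. I also need $\mu_N>0$ for Theorem~\ref{tm:conv}. This follows from~\eqref{eq:Epom}, which gives $\mu_N\ge\mu_B\,C_n/(C_n+2)>0$.

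Third, I restart the analysis at $A^{(N)}$. The iterations from step $N$ onward form again a fixed cyclic row-wise sweep, because the cyclic strategy repeats with period $N$. So $\widehat A^{(k)}:=A^{(N+k)}$ is the naive Jacobi sequence started at $\widehat A^{(0)}=A^{(N)}$. This starting matrix has $\widehat\mu_0=\mu_N>0$ and $80(n-1)\widehat\eta_0/\widehat\mu_0\le1$. Applying Theorem~\ref{tm:conv} to it gives
$$\eta_{(k+2)N}\le \frac{80(n-1)}{\mu_N}\,\eta_{(k+1)N}^2$$
and the corresponding off-norm bound, which is the claimed quadratic convergence.

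I expect no real obstacle; the argument is bookkeeping. The only points needing care are the following. First, the index shift must be consistent with the cycle structure, and the strategy must be the row-wise one required by Theorem~\ref{tm:triangular} and hence by Theorem~\ref{tm:almosttriang}. Second, strictly speaking, the pivot steps in the first cycle must be well defined (not skipped as nondiagonalizable). This is implicit in the continuity argument of Theorem~\ref{tm:almosttriang}, where $a_{pp}\ne a_{qq}$ near $B$, so I would simply cite that theorem rather than reprove it.
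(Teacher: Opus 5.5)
Your proposal is correct and takes essentially the same approach as the paper: the proof there notes that $\widetilde A=B+L$ satisfies the hypotheses of Theorem~\ref{tm:almosttriang}, so $\ell<c(B)$ gives $80(n-1)\eta_N/\mu_N\le1$, and then applies Theorem~\ref{tm:conv} to $A^{(N)}$ to obtain quadratic convergence from the second cycle on. Your extra checks ($\mu_N>0$ via~\eqref{eq:Epom}, the index shift $\widehat A^{(k)}=A^{(N+k)}$, and the row-wise strategy) only make explicit what the paper leaves implicit.
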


\begin{proof}
Since
$$
A^{(0)}=\widetilde A=B+L,
$$
with $B$ upper triangular and $L$ strictly lower triangular,
all assumptions of Theorem~\ref{tm:almosttriang}
are satisfied.
The assumption
$$\ell<c(B)$$
implies
$$80(n-1)\frac{\eta_N}{\mu_N}\le1,$$
where $\eta_N$ and $\mu_N$ denote the quantities after one cycle of the
naive Jacobi algorithm.

Therefore, the matrix $A^{(N)}$ satisfies the assumptions of
Theorem~\ref{tm:conv}$,$ which yields quadratic convergence from the
second cycle onward.
\end{proof}

In the case of \texttt{eig} preconditioner (Algorithm~\ref{agm:nJmp3eig}), the off-norm of the matrix $\widetilde{A}$, the starting matrix of the naive Jacobi algorithm, will be small, of order $\off(A)\,c\,\mathbf{u_l}$ for some $c>0$.

\begin{Proposition}\label{prop:mp}
Let $A\in\C^{n\times n}$, $n\geq3$, be an input matrix of Algorithm~\ref{agm:nJmp3eig}. After the preconditioning step, let $A^{(0)}=\widetilde{A}$, and suppose that 
\begin{equation}\label{eq:offAtilde}
    \off(A^{(0)})\leq \varepsilon, \quad \varepsilon>0,
\end{equation}
Let $\lambda_1^{(0)},\ldots,\lambda_n^{(0)}$ be the eigenvalues of
$A^{(0)}$, and denote their spectral gap by 
$$\delta^{(0)}=\min_{i\neq j}|\lambda_i^{(0)}-\lambda_j^{(0)}|>0.$$
If
\begin{equation}\label{eq:conv0}
    (80(n-1)+2)\varepsilon\leq\delta^{(0)},
\end{equation}
then inequality
$$80(n-1)\frac{\eta_0}{\mu_0}\leq1$$
holds, for $\mu_0$ and $\eta_0$ defined in~\eqref{mu_k} and~\eqref{eta_k}, respectively.
Consequently, the naive Jacobi algorithm applied to $\widetilde{A}$ converges quadratically.
\end{Proposition}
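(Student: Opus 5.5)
The plan is to bound $\eta_0$ from above and $\mu_0$ from below, both in terms of $\varepsilon$ and $\delta^{(0)}$, and then invoke Theorem~\ref{tm:conv}.

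\emph{Bound on $\eta_0$.} This is immediate. Every off-diagonal entry is bounded by the off-norm, so $\eta_0\leq\off(A^{(0)})\leq\varepsilon$.

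\emph{Bound on $\mu_0$.} We want to show $\mu_0\geq\delta^{(0)}-2\varepsilon$. Write $A^{(0)}=\Delta+F$, where $\Delta=\mathrm{diag}(a_{11}^{(0)},\dots,a_{nn}^{(0)})$ and $F$ is the off-diagonal part, so $\|F\|_F=\off(A^{(0)})\leq\varepsilon$. The task is to match each diagonal entry $a_{ii}^{(0)}$ to a distinct eigenvalue $\lambda_{\pi(i)}^{(0)}$ with $|a_{ii}^{(0)}-\lambda_{\pi(i)}^{(0)}|\leq\varepsilon$. Once that holds, the triangle inequality gives
$$|a_{ii}^{(0)}-a_{jj}^{(0)}|\geq|\lambda_{\pi(i)}^{(0)}-\lambda_{\pi(j)}^{(0)}|-2\varepsilon\geq\delta^{(0)}-2\varepsilon,$$
and hence $\mu_0\geq\delta^{(0)}-2\varepsilon$.

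\emph{Conclusion.} From \eqref{eq:conv0}, $\delta^{(0)}-2\varepsilon\geq 80(n-1)\varepsilon>0$. Therefore $\mu_0>0$ and $80(n-1)\eta_0/\mu_0\leq 80(n-1)\varepsilon/(80(n-1)\varepsilon)=1$. Theorem~\ref{tm:conv} then gives quadratic convergence.

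\emph{Main obstacle: the matching.} For nonnormal $A^{(0)}$, the Bauer--Fike theorem applied with $A^{(0)}$ as a perturbation of $\Delta$ only gives $\min_j|a_{ii}^{(0)}-\lambda_j^{(0)}|\leq\|F\|_2\leq\varepsilon$, because $\Delta$ is diagonal and thus normal. This places each diagonal entry within $\varepsilon$ of \emph{some} eigenvalue, but it does not by itself make the assignment injective. To get injectivity:

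\begin{itemize}
\item \textbf{Two diagonal entries cannot share an eigenvalue.} First try a direct argument. If $a_{ii}^{(0)}$ and $a_{jj}^{(0)}$ were both within $\varepsilon$ of the same $\lambda^{(0)}$, nothing immediately contradicts this. So use the continuity/homotopy argument instead: consider $A(t)=\Delta+tF$ for $t\in[0,1]$. For every $t$, each eigenvalue of $A(t)$ lies within $t\varepsilon$ of some diagonal entry. This is Bauer--Fike applied the other way: $\Delta$ is normal, so eigenvalues of $\Delta+tF$ lie in the union of the disks $D(a_{ii}^{(0)},t\varepsilon)$. Continuity of eigenvalues in $t$ then shows that each connected component of that union of disks contains as many eigenvalues as centres.
\item \textbf{Ruling out merged disks.} Components containing several centres must be excluded. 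This is the delicate point. I would exclude them by using the gap $\delta^{(0)}\geq(80(n-1)+2)\varepsilon$: a component containing $m\geq2$ centres has diameter at most $2m\varepsilon\leq2n\varepsilon$, yet it would have to contain $m$ eigenvalues pairwise at least $\delta^{(0)}>2n\varepsilon$ apart. That is a contradiction.
\item \textbf{Result.} All disks are therefore isolated, each contains exactly one eigenvalue, and the map $\pi$ is a bijection with $|a_{ii}^{(0)}-\lambda_{\pi(i)}^{(0)}|\leq\varepsilon$, which is what the bound on $\mu_0$ needs.
\end{itemize}

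If this homotopy bookkeeping proves awkward, the fallback is the Gershgorin-type argument used in Proposition~\ref{prop:eig}, with the disk radii bounded via the row sums of $|F|$.
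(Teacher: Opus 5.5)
Your proof is correct. The bound $\eta_0\le\off(A^{(0)})\le\varepsilon$ and the final step match the paper exactly; the difference is in how you match diagonal entries to eigenvalues. That step only looked like the main obstacle because your opening claim states Bauer--Fike in the wrong direction. With $\Delta$ normal and $A^{(0)}=\Delta+F$, the theorem says that each \emph{eigenvalue} $\lambda_i^{(0)}$ lies within $\|F\|_2\le\varepsilon$ of some diagonal entry $a^{(0)}_{j(i)j(i)}$. It does not say that each diagonal entry lies near some eigenvalue; that direction would involve the eigenvector condition number of $A^{(0)}$. In the correct direction, injectivity is immediate, and this is the paper's argument. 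If $j(i)=j(k)$ with $i\neq k$, then $|\lambda_i^{(0)}-\lambda_k^{(0)}|\le2\varepsilon<\delta^{(0)}$, a contradiction. So $i\mapsto j(i)$ is an injective self-map of $\{1,\dots,n\}$, hence a permutation. Your homotopy argument with $A(t)=\Delta+tF$ does use the correct direction, and it is also valid. Take the union of closed disks at the fixed radius $\varepsilon$, rather than $t\varepsilon$, so that its components do not depend on $t$. Then a component with $m$ centres contains $m$ eigenvalues and has diameter at most $2m\varepsilon\le2n\varepsilon$. Since $2n\varepsilon<(80(n-1)+2)\varepsilon\le\delta^{(0)}$ for $n\ge2$, this forces $m=1$. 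It reaches the same conclusion as the paper, with no extra structure gained: the paper's bound $\mu_0\ge\delta^{(0)}-2\varepsilon>2\varepsilon$ already makes the $\varepsilon$-disks disjoint, each holding one eigenvalue. The price is a continuity argument and the stronger separation $\delta^{(0)}>2n\varepsilon$ instead of $\delta^{(0)}>2\varepsilon$, though both follow from \eqref{eq:conv0}. Your Gershgorin fallback is unnecessary and would be weaker, because in terms of $\off(A^{(0)})$ the Gershgorin radii are only bounded by $\sqrt{n-1}\,\varepsilon$.
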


\begin{proof}
By assumption~\ref{eq:offAtilde}, we have
\begin{equation}\label{eq:eta_0}
    \eta_0
    =\max_{i\neq j}|a^{(0)}_{ij}|
    \leq\off (A^{(0)})
    \leq\varepsilon.
\end{equation}

Let $D=\mathrm{diag}\left(a_{11}^{(0)},a_{22}^{(0)},\ldots,a_{nn}^{(0)}\right)$ and $E=A^{(0)}-D$. 
Then,
$$\|E\|_2\leq\|E\|_F=\|A^{(0)}-D\|_F=\off(A^{(0)})\leq\varepsilon.$$
Matrices $D+E$ and $A^{(0)}$ have the same eigenvalues. Since  $D$ is normal, the  Bauer--Fike theorem implies that  for every eigenvalue $\lambda_i^{(0)}$ of $A^{(0)}$,
there exists an index $j(i)$ such that
$$|a_{j(i)j(i)}^{(0)}-\lambda_i^{(0)}| \leq \|E\|_2 \leq \varepsilon.$$
Condition~\eqref{eq:conv0} implies 
$\delta^{(0)}>2\varepsilon$. Hence, the eigenvalues of $A^{(0)}$ are separated by more than $2\varepsilon$, and therefore two different eigenvalues  cannot correspond to the same diagonal entry of $D$. Hence, $i\mapsto j(i)$ is a  permutation and we may assume that 
$$|a^{(0)}_{ii}-\lambda_i^{(0)}|\leq \varepsilon,\quad i=1,\ldots,n.$$

Now, for $i\neq j$, we get
$$|a^{(0)}_{ii}-a^{(0)}_{jj}| \geq |\lambda_i^{(0)}-\lambda_j^{(0)}| -|a^{(0)}_{ii}-\lambda_i^{(0)}| -|a^{(0)}_{jj}-\lambda_j^{(0)}| \geq |\lambda_i^{(0)}-\lambda_j^{(0)}|-2\varepsilon.$$
Taking the minimum over $i\neq j$ yields
$$\mu_0\geq\delta^{(0)}-2\varepsilon,$$
and, using ~\eqref{eq:conv0} and~\eqref{eq:eta_0}, we obtain
$$  \mu_0\geq80(n-1)\varepsilon\geq80(n-1)\eta_0,$$
  which proves the claim.
\end{proof}


\section{Numerical examples with mixed precision preconditioning}\label{sec:num_mp}

In this section we discuss numerical results attained by the Algorithms~\ref{agm:nJmp} and~\ref{agm:nJmp3eig}. The Advanpix Multiprecision Computing Toolbox~\cite{advanpix} is used for multi-precision computations.

\subsection{Assessing different preconditioners}

We start this section by assessing how much di\-ffe\-rent preconditioners reduce the matrix off-norm, for the \texttt{eig} preconditioner (Algorithm~\ref{agm:nJmp3eig}), that is, the Frobenius norm of the lower triangle, denoted by $\off_L$, for the Schur preconditioner (Algorithm~\ref{agm:nJmp}). For each $n$, $20\leq n\leq200$, we used five random matrices and plotted the mean value of $\off(\widetilde{A})/\off(A)$ and $\off_L(\widetilde{A})/\off_L(A).$
The results are visible in Figure~\ref{fig:preconditioner}. As expected, when the precision is lower, the reduction is lower. It can be observed from the left-hand side in Figure~\ref{fig:preconditioner} that quarter-precision preconditioner is not beneficial for the bigger matrices.

\begin{figure}[ht]
\includegraphics[width=.45\linewidth]{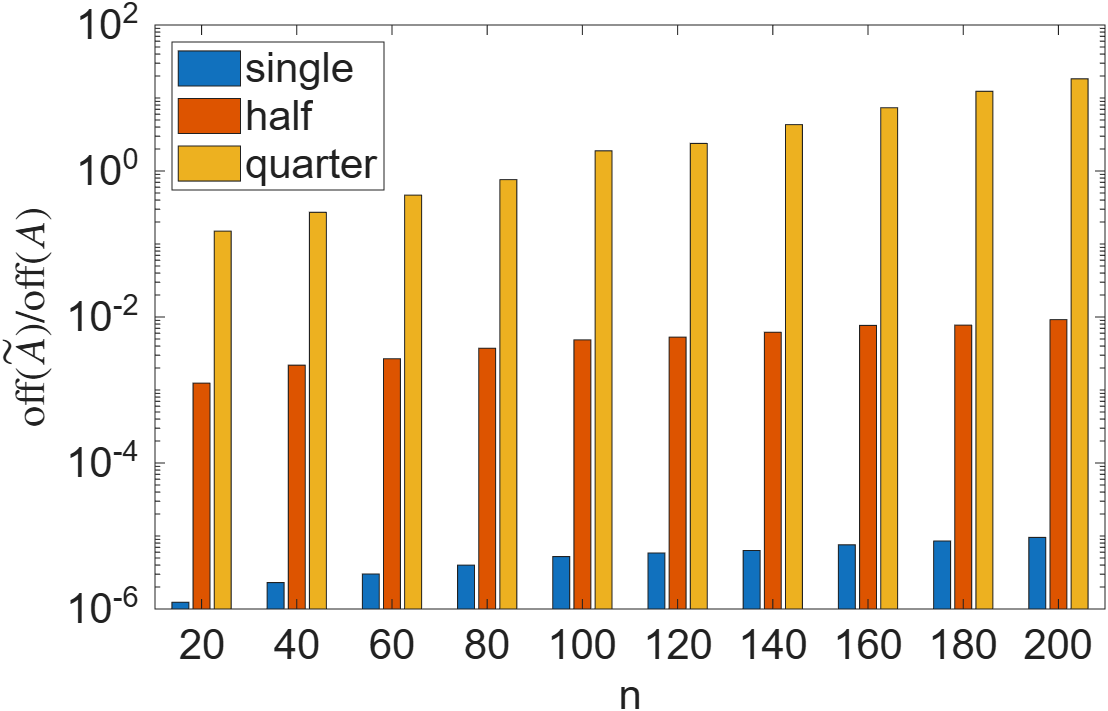} \quad \includegraphics[width=.45\linewidth]{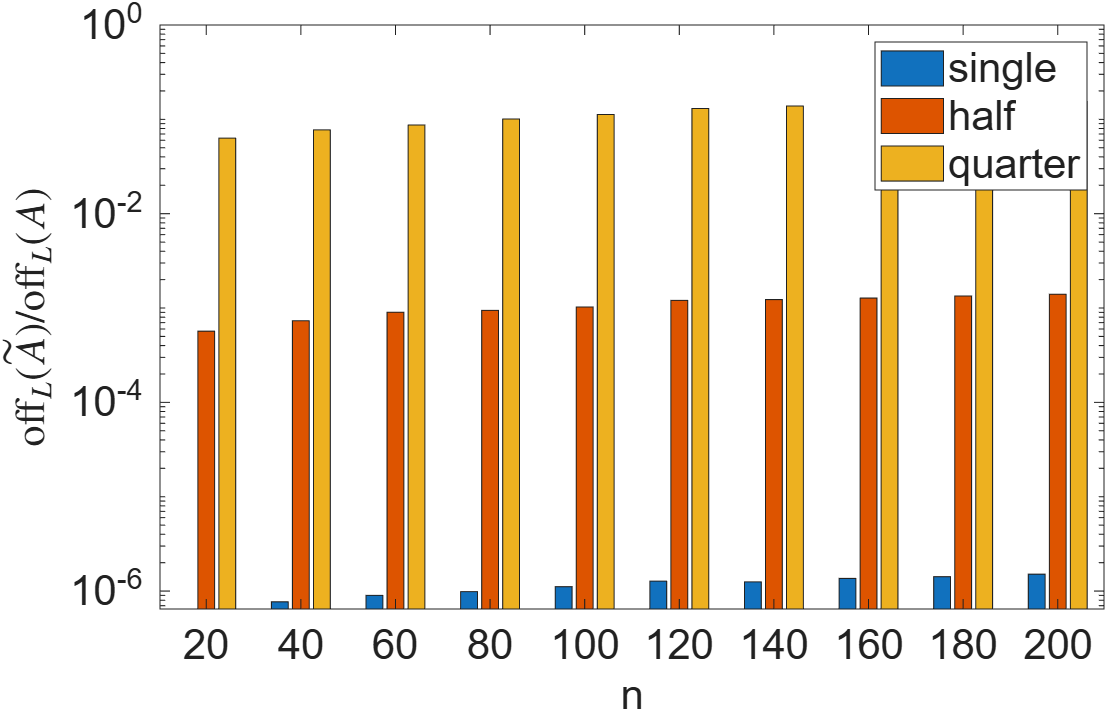}
\caption{$\off$ reduction for the \texttt{eig} preconditioner (left) and $\off_L$ reduction for the Schur preconditioner (right)}\label{fig:preconditioner}
\end{figure}

In Figure~\ref{fig:mp_cvg} we show the convergence of the preconditioned matrix for both preconditioners, for $n=200$ and $n=400$, on random complex matrices. We used the command $\texttt{rng(1)}$ to ensure reproducibility. In Table~\ref{table:cvg} we report the values $80(n-1)\eta_0/\mu_0$, where $\eta_0$ and $\mu_0$ are parameters of the preconditioned matrix $\widetilde{A}$. In order to fulfill the convergence criterion~\eqref{eq:tmassumption}, this value should be at most one. As it can be seen in the table, the condition did not hold. Nevertheless, the naive Jacobi algorithm converged.

\begin{figure}[ht]
\includegraphics[width=.45\linewidth]{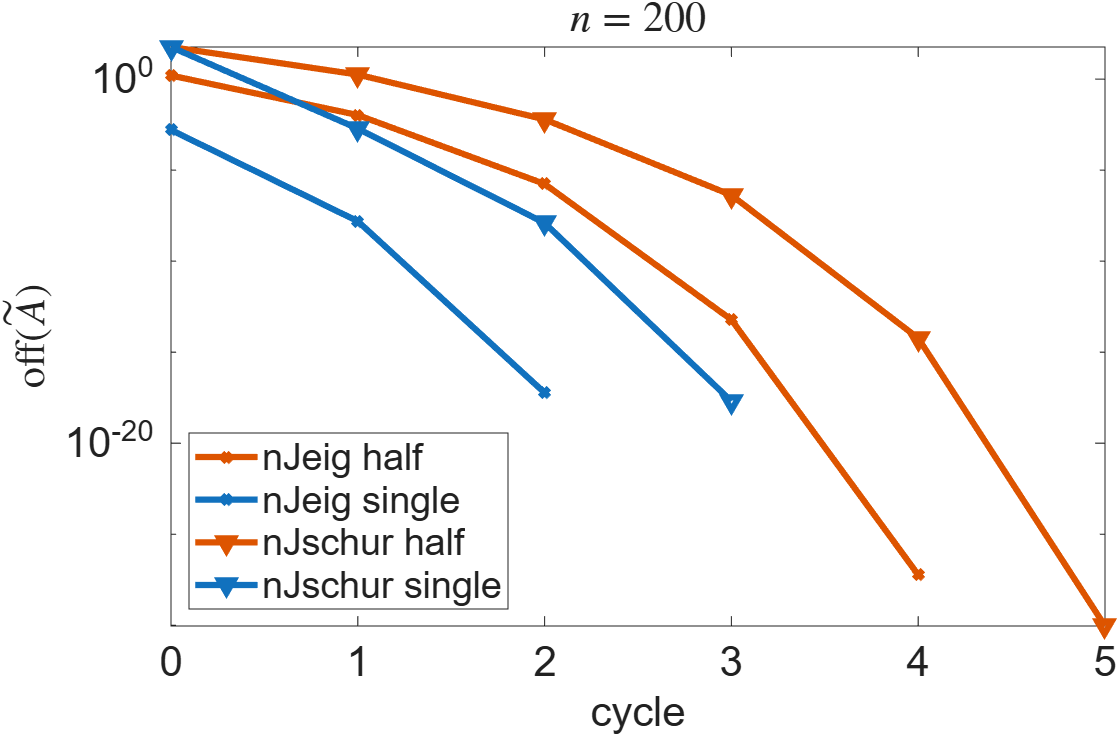} \quad \includegraphics[width=.45\linewidth]{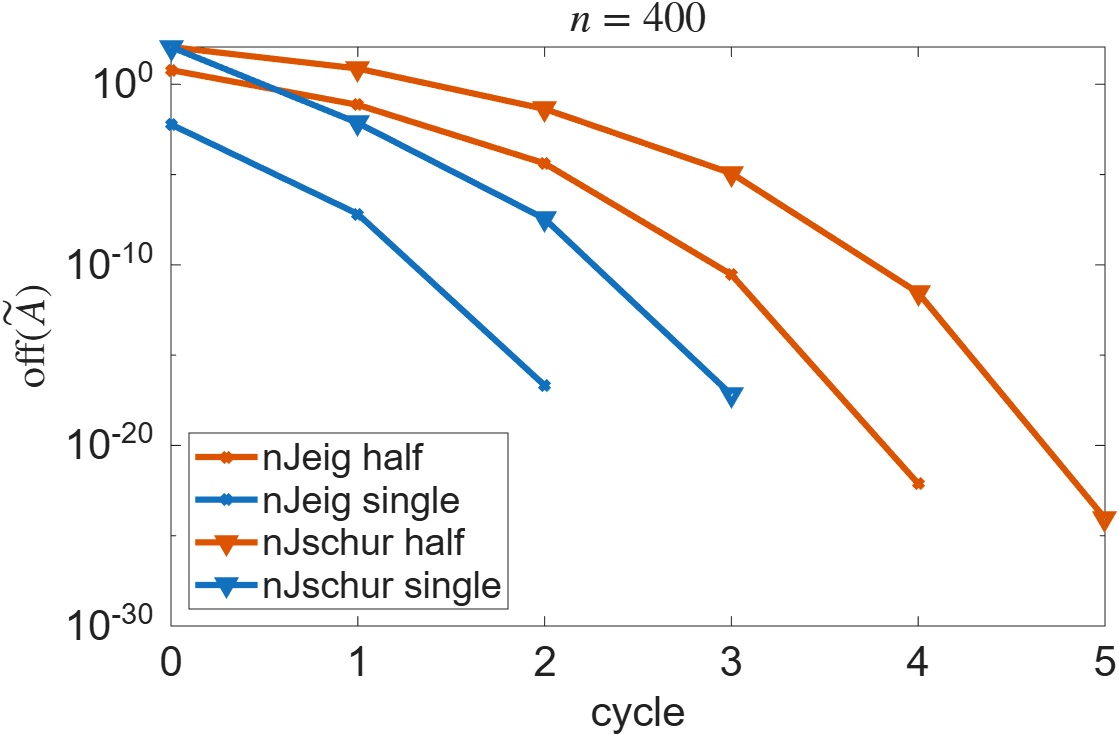}
\caption{Convergence of the mixed-precision naive Jacobi with the \texttt{eig} and Schur preconditioners}\label{fig:mp_cvg}
\end{figure}

\begin{table}[ht]
\centering
\begin{tabular}{ccccc}
\toprule
 & nJeig half & nJeig single & nJschur half & nJschur single \\
\midrule
$n=200$ & 13965 & 8 & 134170 & 127400 \\
$n=400$ & 162836 & 118 & 414303 & 361359 \\
\bottomrule
\end{tabular}
\caption{Values $80(n-1)\eta_0/\mu_0$ for the preconditioned matrices}\label{table:cvg}
\end{table}

\begin{figure}[ht]
\includegraphics[width=.45\linewidth]{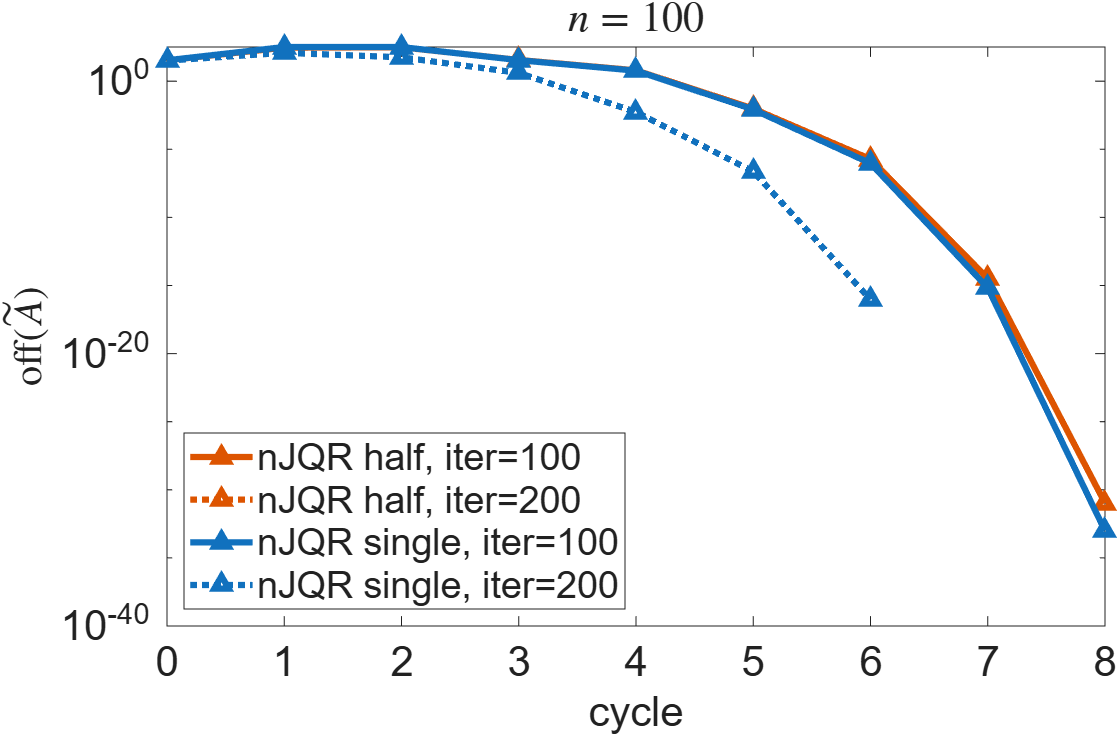} \quad \includegraphics[width=.45\linewidth]{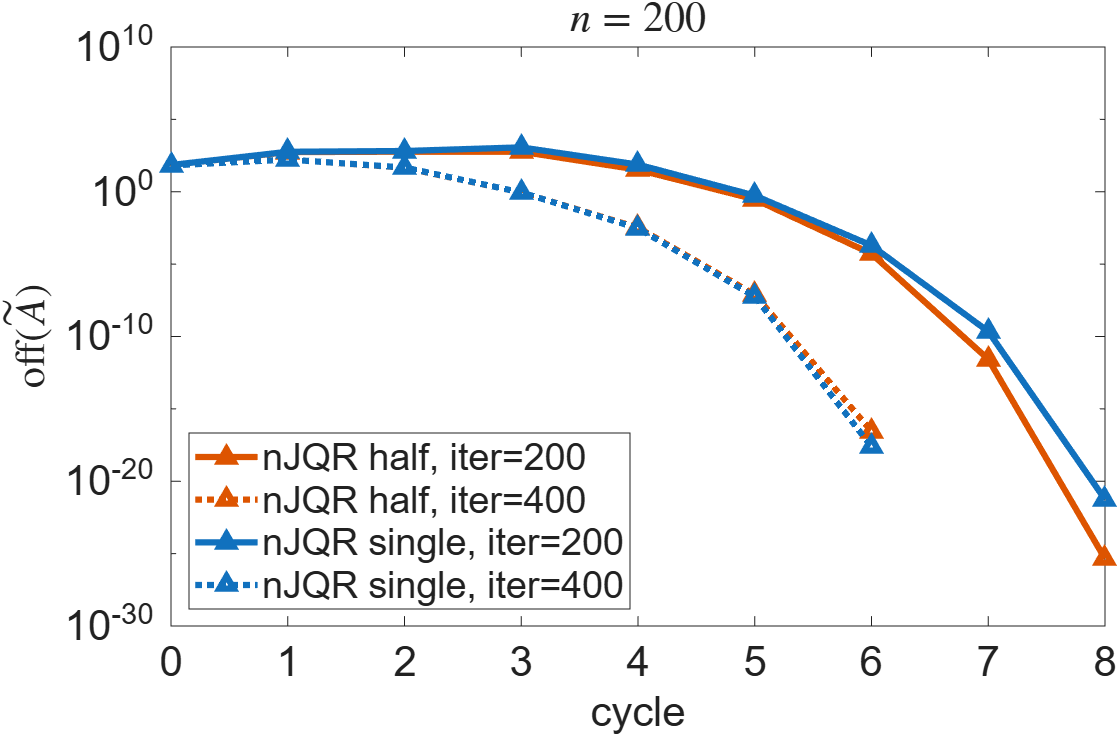}
\caption{Convergence of the mixed-precision naive Jacobi with the QR preconditioner}\label{fig:mp_cvgQR}
\end{figure}

For the QR preconditioner, we have observed that in practice, it is not necessary to do the full QR algorithm until convergence. This is useful because the QR algorithm can be time-consuming. In practice, it seams that the number of QR iterations needed for the convergence of the naive Jacobi is up to $n$ for $n=1000$. For a larger $n$, this number may increase. In Figure~\ref{fig:mp_cvgQR} we present the convergence behavior of the naive Jacobi algorithm on the preconditioned matrix, after different numbers of QR iterations. We did the experiment for $n=100$ and $n=200$ on random complex matrices generated with the command $\texttt{rng(1)}$. We used the QR algorithm with shifts. 

\begin{table}[ht]
\centering
\begin{tabular}{cccccc}
\toprule
& $n=20$ & $n=40$ & $n=60$ & $n=80$ & $n=100$  \\
\midrule
nJeig half & 13.5098 & 30.6013 & 47.5730 & 47.1365 & 63.8975  \\
nJeig single & 13.5090 & 30.5361 & 47.4275 & 47.0373 & 63.8406  \\
nJschur half &  1.0017 & 1.0029 & 1.0036 & 1.0049 & 1.0055  \\
nJschur single & 1.0000 & 1.0000 & 1.0000 & 1.0000 & 1.0000  \\
\bottomrule
\end{tabular}
\caption{Condition number of the preconditioner}\label{table:cond}
\end{table}

Moreover, we computed the condition numbers of the preconditioners (after they are transformed to double precision for the Algorithm~\ref{agm:nJmp}, that is, to quadruple precision for the Algorithm~\ref{agm:nJmp3eig}). The experiment is done on random matrices using fixed random seed \texttt{rng(1)}. In Table~\ref{table:cond} we give the results for the preconditioners attained in half or single precision, for $20\leq n\leq100$. As it was said in Section~\ref{sec:mp}, for the Schur approach, preconditioning matrix $\widetilde{V}$ remains nearly orthogonal, thus, $\text{cond}(\widetilde{V})$ is close to one. The condition numbers in the case of the QR preconditioner are the same, since $V_l$ is orthogonal in half/single precision, so we do not list them in the table. When using \texttt{eig} preconditioner, $\widetilde{V}$ may have large condition number, since $V_l$ is not orthogonal in the low precision, either.

\subsection{Accuracy of the mixed-precision naive Jacobi}

Next, we test the maximal forward error of the preconditioned naive Jacobi method compared to the MATLAB \texttt{eig} function. For the exact eigenvalues, we take the eigenvalues computed with \texttt{eig} in quadruple precision. The eigenvalues are paired using a minimum-cost assignment. We present the maximal relative errors.

\subsubsection{Varying matrix size}

For the Figure~\ref{fig:mp_acceig} we used \texttt{eig} preconditioner in single and half precision. We conducted two sets of experiments, one on small matrices, $10\leq n\leq100$, and the other on larger matrices, $50\leq n\leq500$. For each $n$ we tested five random complex matrices and plotted the average case. As it can be observed from the figure, the preconditioned naive Jacobi always gave more accurate eigenvalues than MATLAB's \texttt{eig}, better for approximately one order of magnitude. 
In the same way, in Figure~\ref{fig:mp_accschurqr} we compared the naive Jacobi with the triangular preconditioners, Schur and QR with $n$ iterations, in single and half precision. For the Schur preconditioner, the results are slightly better than for MATLAB's \texttt{eig}, while the difference in favor of the QR preconditioner is significant.

\begin{figure}[ht]
\includegraphics[width=.45\linewidth]{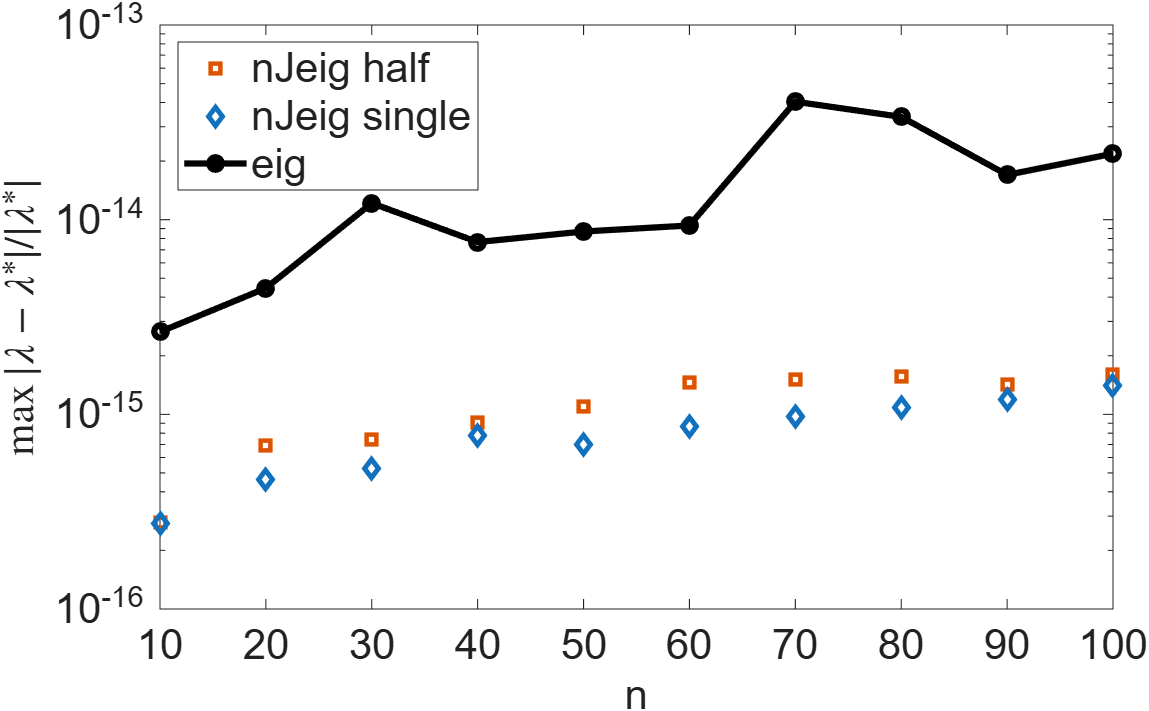} \quad \includegraphics[width=.45\linewidth]{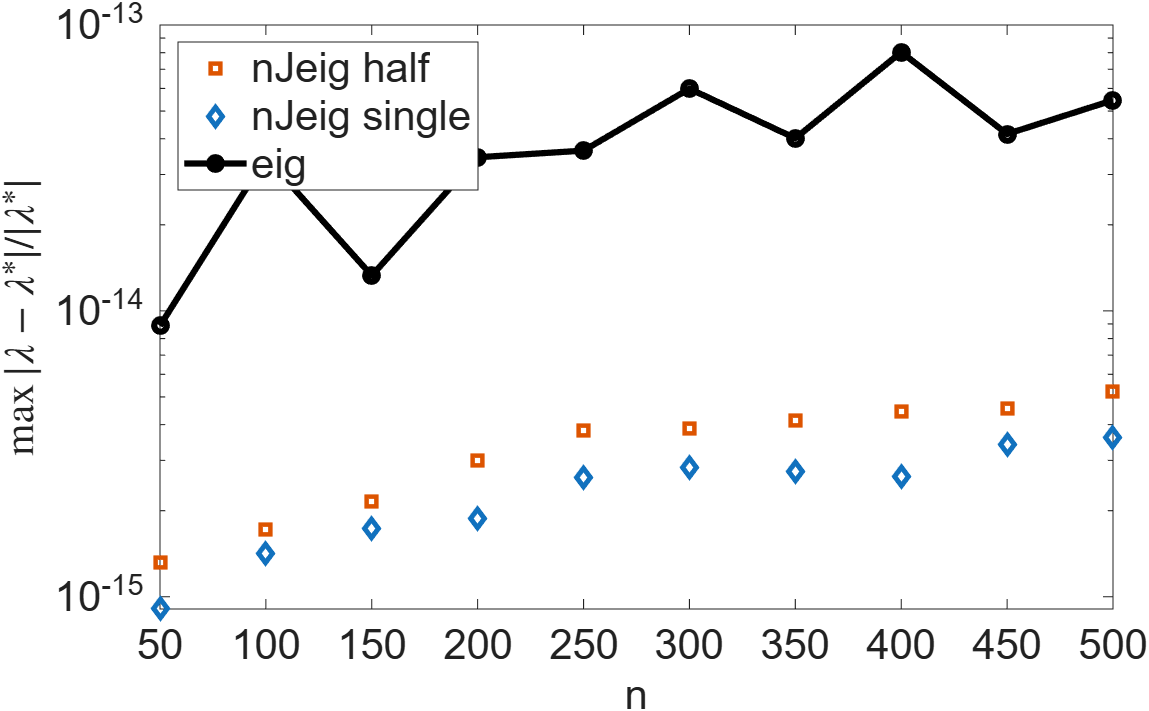}
\caption{Accuracy of the mixed-precision naive Jacobi with the \texttt{eig} preconditioner}\label{fig:mp_acceig}
\end{figure}

\begin{figure}[ht]
\includegraphics[width=.45\linewidth]{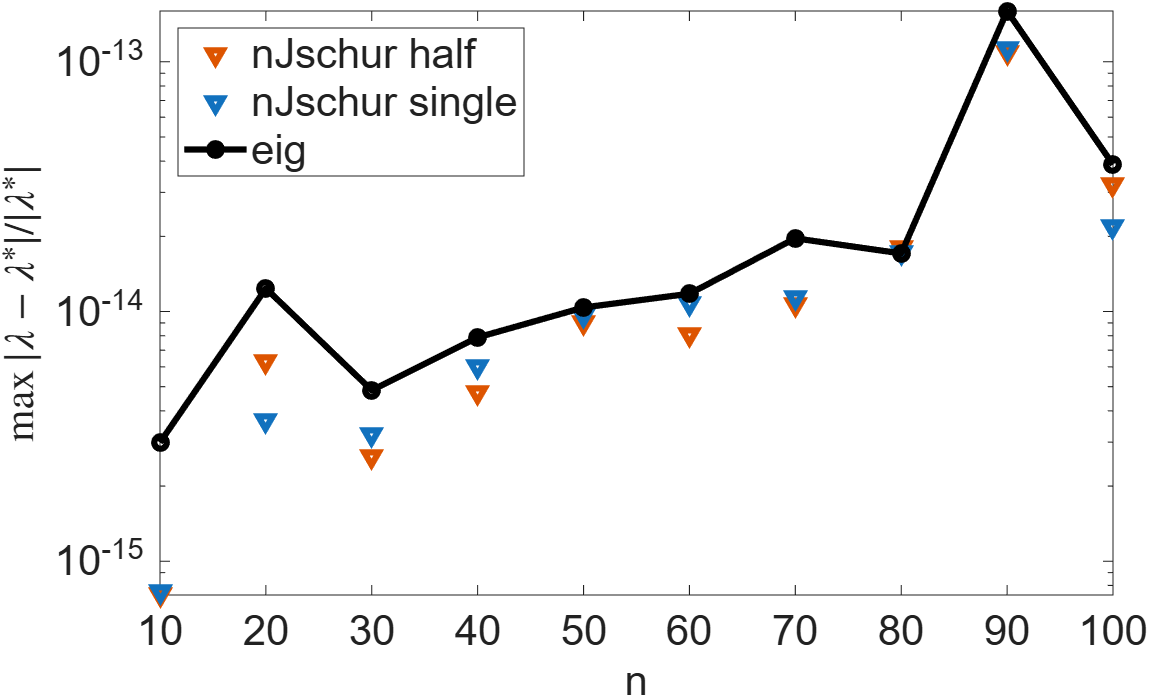} \quad \includegraphics[width=.45\linewidth]{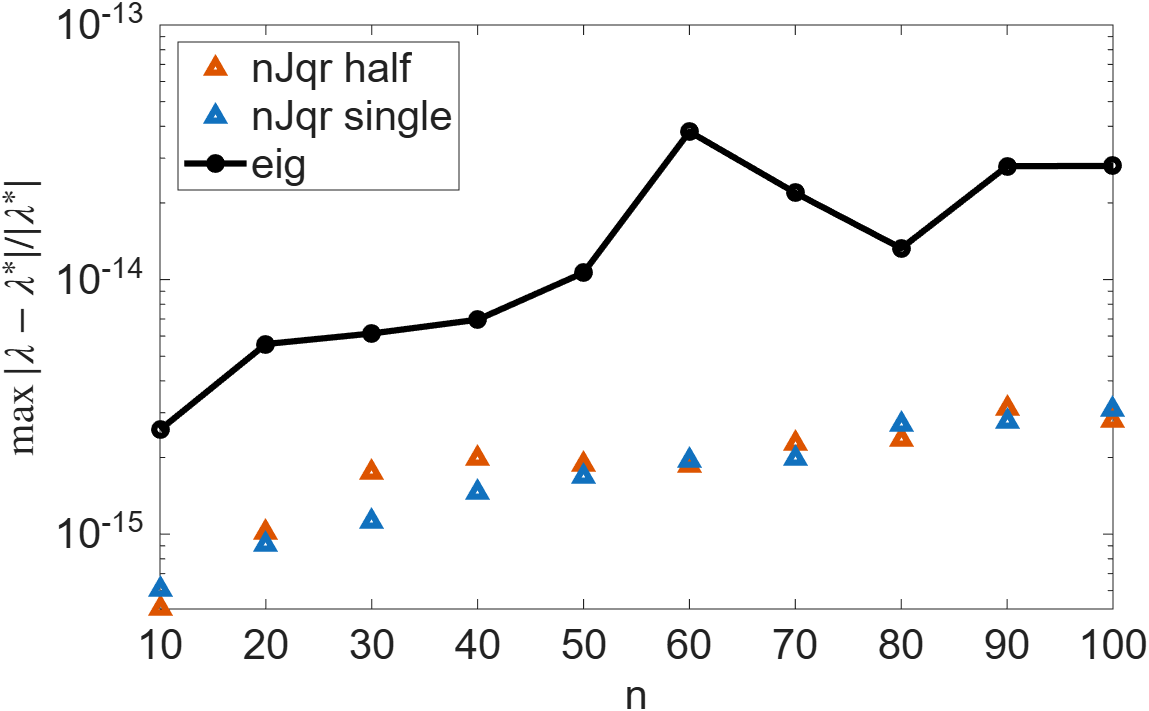}
\caption{Accuracy of the mixed-precision naive Jacobi with the Schur (left) and QR (right) preconditioner}\label{fig:mp_accschurqr}
\end{figure}

\subsubsection{Varying matrix condition number}

\begin{figure}[ht]
\includegraphics[width=.45\linewidth]{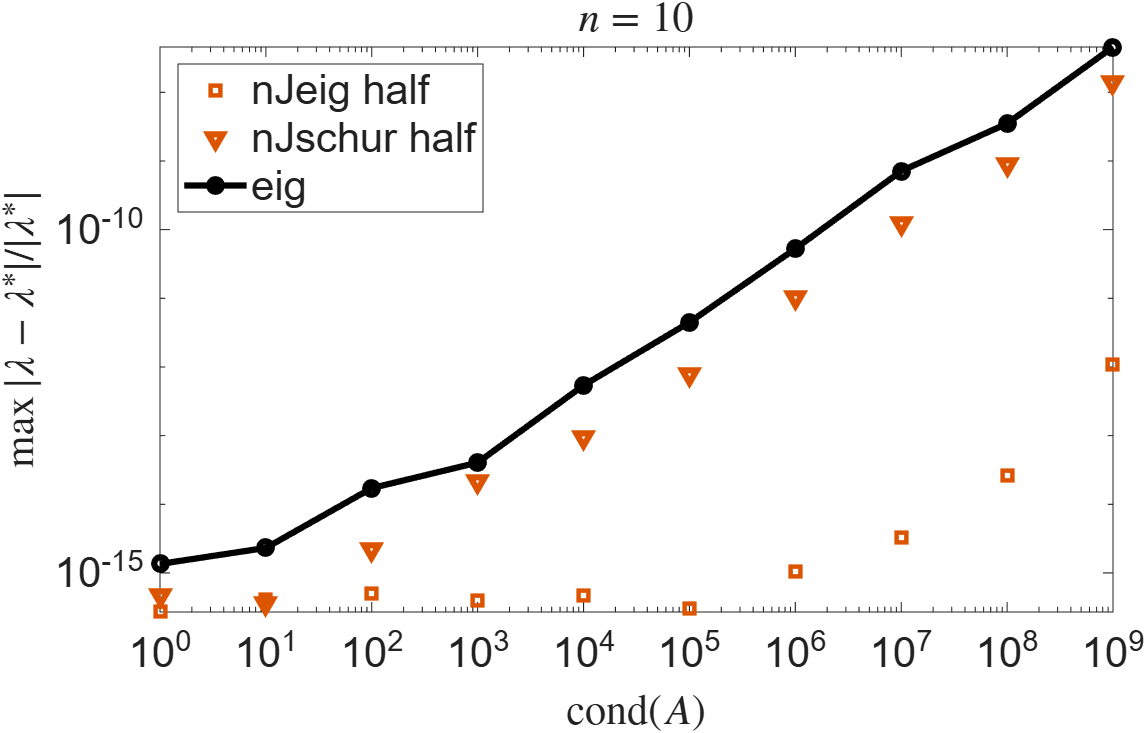} \quad \includegraphics[width=.45\linewidth]{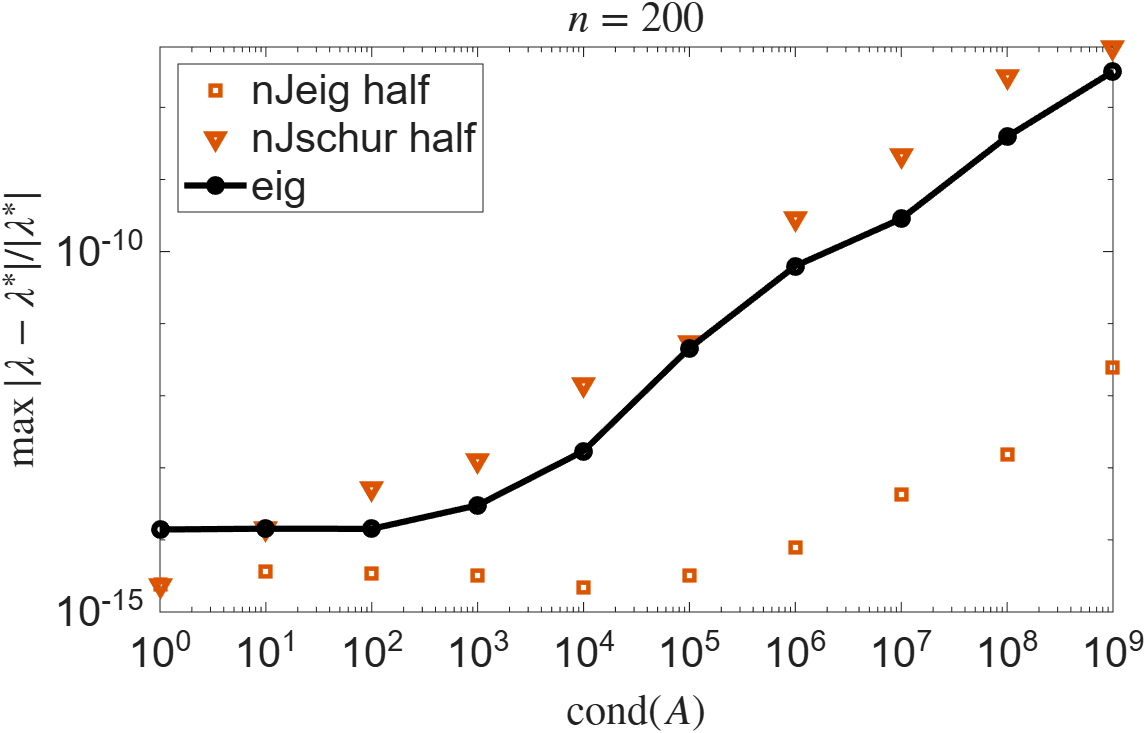}
\caption{Accuracy of the mixed-precision naive Jacobi with the \texttt{eig} and Schur preconditioner}\label{fig:mp_cond}
\end{figure}

For the fixed $n$ we tested preconditioned naive Jacobi algorithm on ill-conditioned matrices. We applied the \texttt{eig} and Schur preconditioners, both in half precision. Tested matrices with the condition number $c$ are formed in the following way:
\begin{verbatim}
    rng(1);
    s=[1, 1/c+(1-1/c)*rand(1,n-2), 1/c];
    [U,~]=qr(complex(rand(n),rand(n)));
    [V,~]=qr(complex(rand(n),rand(n)));
    A=V*diag(s)*U';
\end{verbatim}
We used $c$ between $1$ and $10^9$, for $n=10$ and $n=200$. In Figure~\ref{fig:mp_cond} we can observe that the naive Jacobi preconditioned by \texttt{eig} is much better than MATLAB's \texttt{eig}, while the Schur preconditioner provided very decent results, comparable to MATLAB's \texttt{eig}.

\subsection{Special test matrices}

We tested our algorithm on two special types of matrices. The first one is a tridiagonal matrix formed as
\begin{verbatim}
    rng(1);
    d=rand(n,1)+rand(n,1)*1i;
    c=ones(n-1,1);
    e=ones(n-1,1);
    A=full(gallery('tridiag',c,d,e));
\end{verbatim}
In Figure~\ref{fig:tridiag} we present the relative accuracy and convergence results for the preconditioned algorithm, using \texttt{eig} and \texttt{schur} preconditioners, both in half and single precision. The preconditioned naive Jacobi always gave more accurate results. On the $300\times300$ matrix, the algorithm converged in between two and five cycles, depending on the preconditioning strategy.

\begin{figure}[ht]
\includegraphics[width=.45\linewidth]{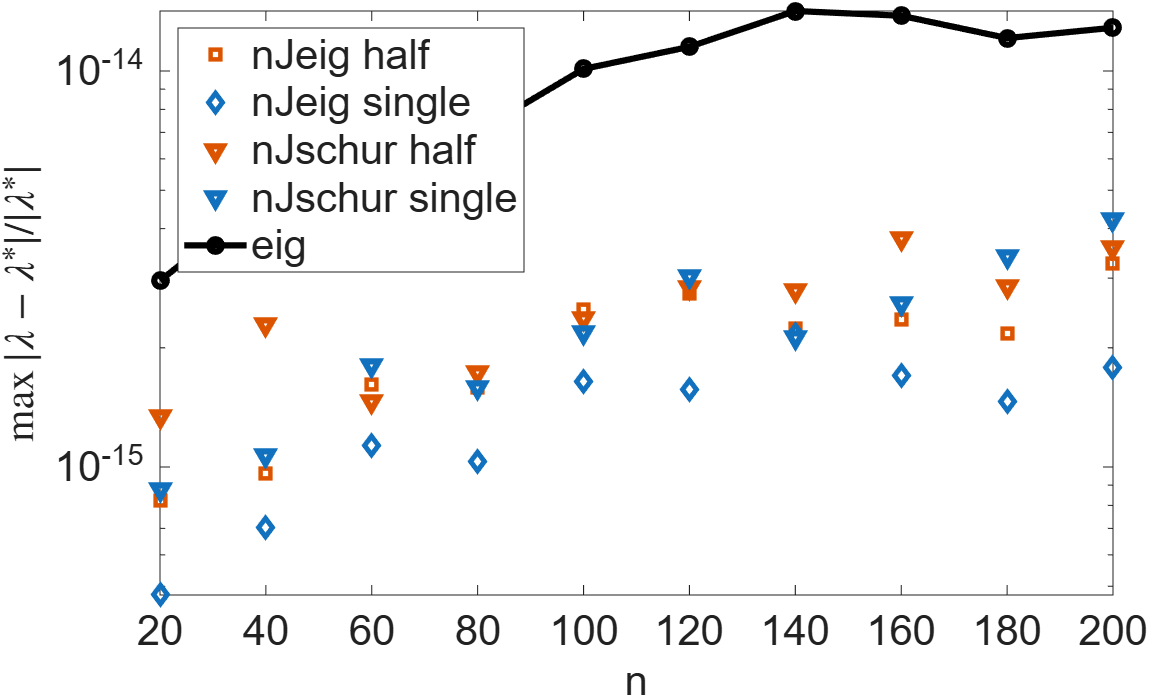} \quad \includegraphics[width=.45\linewidth]{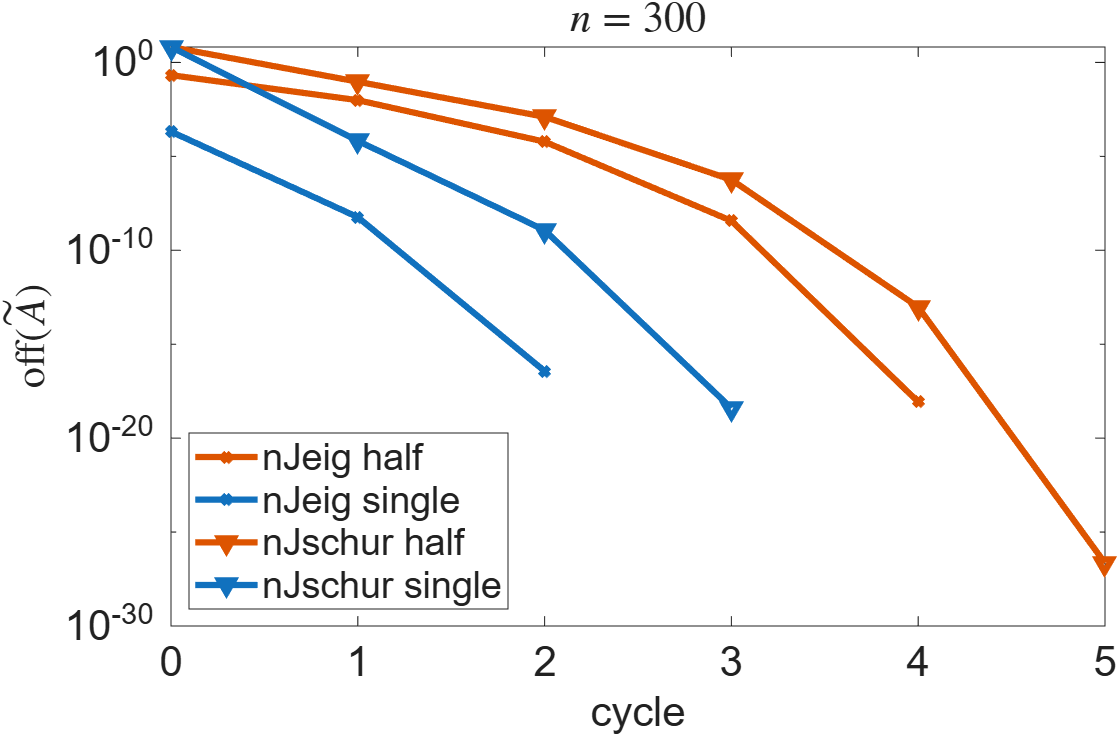}
\caption{Accuracy and convergence of the mixed-precision naive Jacobi on the tridiagonal matrix}\label{fig:tridiag}
\end{figure}

\begin{figure}[ht]
\includegraphics[width=.45\linewidth]{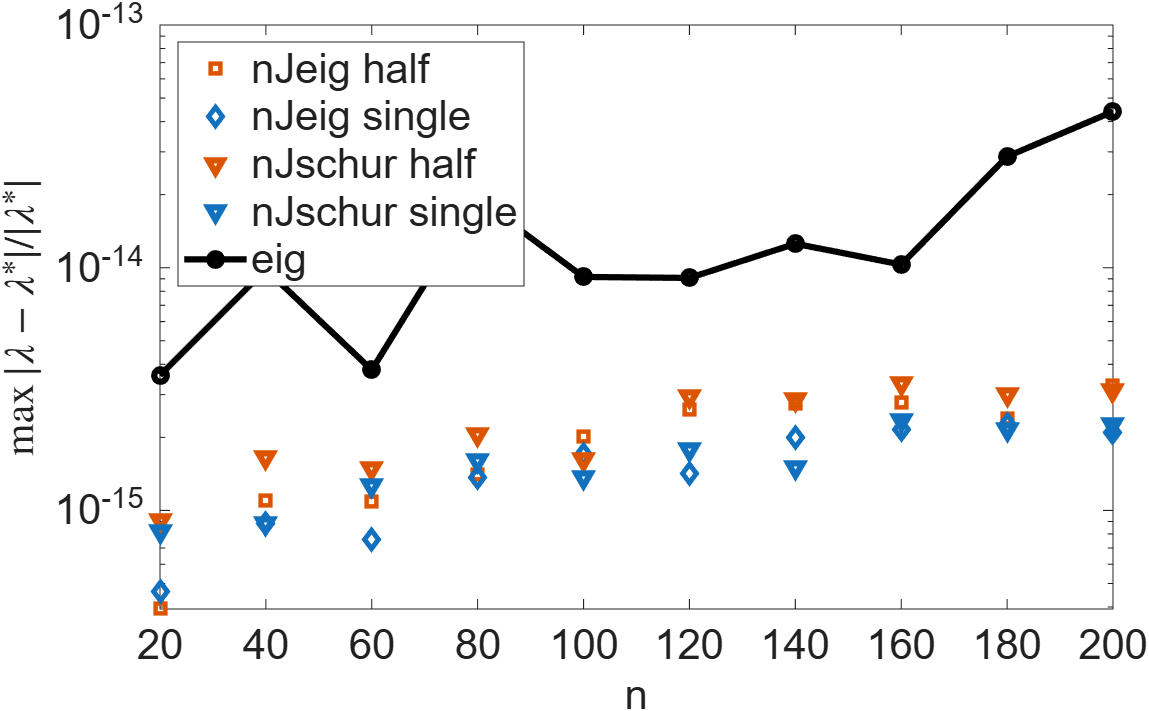} \quad \includegraphics[width=.45\linewidth]{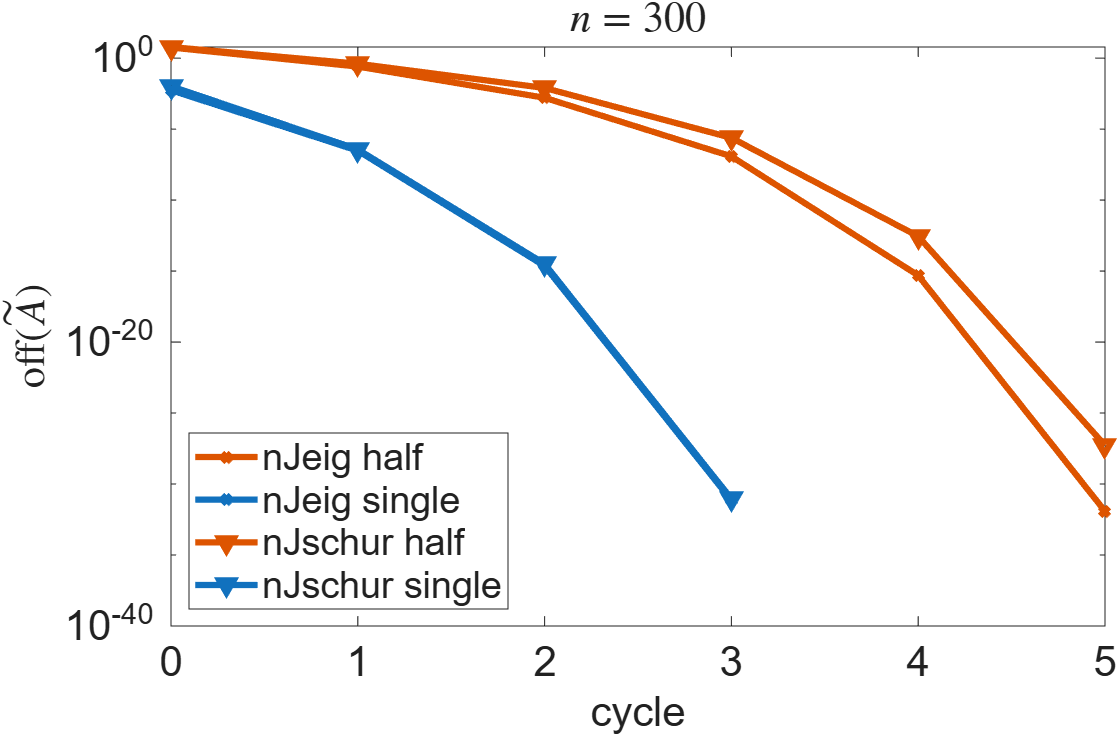}
\caption{Accuracy and convergence of the mixed-precision naive Jacobi on the perturbed ``hanowa'' matrix}\label{fig:hanowa}
\end{figure}

Then, we tested the ``hanowa'' matrix from the MATLAB gallery. That is a real matrix of the form
$$\begin{bmatrix} -I & -D \\ D & -I \end{bmatrix},$$
on which we added the complex perturbation,
\begin{verbatim}
    rng(1);
    A=gallery('hanowa',n)+0.0001*rand(n,n)*1i;
\end{verbatim}
The results are given in Figure~\ref{fig:hanowa} and they are very similar to those for the triangular matrix.

\section{Conclusion}\label{sec:conclusion}

We have revisited the naive Jacobi algorithm for the eigenvalue problem of general complex matrices with simple eigenvalues. We established its asymptotic quadratic convergence for matrices sufficiently close to diagonal form and derived an explicit, verifiable sufficient condition for this convergence. For the row-wise pivot strategy, we also showed that an upper-triangular matrix is diagonalized in one cycle and that matrices sufficiently close to upper-triangular form subsequently enter the quadratic convergence region.

To extend these local results to general input matrices, we introduced mixed-precision preconditioners based on a low-precision eigenvalue decomposition, Schur decomposition, or QR iteration. Numerical experiments show that the theoretical conditions are conservative and that the preconditioned method reliably converges well beyond the guaranteed region. The resulting algorithms attain excellent eigenvalue accuracy, up to one order of magnitude better than MATLAB's \texttt{eig} function, on the matrix families considered in our experiments. These results indicate that mixed-precision preconditioning makes the naive Jacobi algorithm a promising high-accuracy option for solving the eigenvalue problem.

\section*{Acknowledgments}
The authors thank Kre\v simir Veseli\'c for suggesting this research topic and for his valuable insights. The authors also thank Zlatko Drma\v{c} for useful discussion.

\section*{Declaration of AI Use}
OpenAI’s ChatGPT 5.4 Pro was used to sharpen the inequality~\eqref{eq:tmassumption} from Lemma~\ref{tm:convpom}.

\bibliographystyle{siam}
\bibliography{naiveJacobi.bib}

@article{BarlowDemmel1990,
    AUTHOR = {Barlow, Jesse and Demmel, James},
     TITLE = {Computing accurate eigensystems of scaled diagonally dominant
              matrices},
   JOURNAL = {SIAM J. Numer. Anal.},
  FJOURNAL = {SIAM Journal on Numerical Analysis},
    VOLUME = {27},
      YEAR = {1990},
    NUMBER = {3},
     PAGES = {762--791},
      ISSN = {0036-1429},
   MRCLASS = {65F15},
  MRNUMBER = {1041262},
MRREVIEWER = {Alan\ L.\ Andrew},
       DOI = {10.1137/0727045},
       URL = {https://doi-org.ezproxy.nsk.hr/10.1137/0727045},
}

@phdthesis{Zach,
    author ={Zacharias, Wolfgang} ,
    title = {{\"U}ber die {E}igenwertberechnung mittels primitiver {J}acobi-{\"A}hnlicher Verfahren},
    school = {Fernuniversit\"at Hagen},
    year = {1989},
    URL={https://www.fernuni-hagen.de/MATHPHYS/veselic/downloads/zacharias_w_diss_89.pdf}
}

@manual{advanpix,
  title        = {Multiprecision Computing Toolbox for {MATLAB}},
  author       = {{Advanpix}},
  organization = {Advanpix},
  address      = {Tokyo, Japan},
  year         = {2026},
  note         = {Version 5.2, \url{https://www.advanpix.com/}}
}

@article{Zhang2025,
    author = {Zhang, Zhiyuan and Bai, Zheng-Jian},
    title = {A mixed precision preconditioned {J}acobi method for the symmetric eigenvalue problem},
    journal = {arXiv:2211.03339v2 [math.NA]},
    url={https://arxiv.org/abs/2211.03339v2},
    year = {2025}
}

@conference{VeselicNJ,
    author = {Veseli\'c, K.},
    booktitle = {IV {I}nternational Workshop on Accurate Solution of Eigenvalue Problems, {S}plit, {C}roatia},
    title = {Naive {J}acobi Algorithms for General Matrices} ,
    year = {June 24--27, 2002}
}

@book{LAPACK,
author={Anderson, E. and Bai,Z. and  Bischof, C. and et al.},
title={{LAPACK} {U}ser's {G}uide},
edition={3rd ed.},
publisher={{SIAM}, {P}hiladelphia},
year={1999},
doi={10.1137/1.9780898719604}
}

@article{Higham2025,
    AUTHOR = {Higham, Nicholas J. and Tisseur, Fran\c coise and Webb, Marcus
              and Zhou, Zhengbo},
     TITLE = {Computing accurate eigenvalues using a mixed-precision
              {J}acobi algorithm},
   JOURNAL = {SIAM J. Matrix Anal. Appl.},
  FJOURNAL = {SIAM Journal on Matrix Analysis and Applications},
    VOLUME = {46},
      YEAR = {2025},
    NUMBER = {4},
     PAGES = {2423--2448},
      ISSN = {0895-4798,1095-7162},
   MRCLASS = {65F15 (15A18 65F08)},
  MRNUMBER = {4979579},
       DOI = {10.1137/25M1723748},
       URL = {https://doi-org.ezproxy.nsk.hr/10.1137/25M1723748},
}

@book{Golub,
    AUTHOR = {Golub, Gene H. and Van Loan, Charles F.},
     TITLE = {Matrix computations},
    SERIES = {Johns Hopkins Studies in the Mathematical Sciences},
   EDITION = {Fourth},
 PUBLISHER = {Johns Hopkins University Press, Baltimore, MD},
      YEAR = {2013},
     PAGES = {xiv+756},
      ISBN = {978-1-4214-0794-4; 1-4214-0794-9; 978-1-4214-0859-0},
   MRCLASS = {65-02 (65Fxx)},
  MRNUMBER = {3024913},
MRREVIEWER = {J\"org\ Liesen},
}

@standard{IEEE,
  author        = {{IEEE}},
  title        = {Standard for Floating-Point Arithmetic},
  organization = {Institute of Electrical and Electronics Engineers},
  number       = {IEEE Std 754-2019},
  year         = {2019},
  doi          = {10.1109/IEEESTD.2019.8766229}
}

@article {ShroffSchreiber89,
    AUTHOR = {Shroff, Gautam and Schreiber, Robert},
     TITLE = {On the convergence of the cyclic {J}acobi method for parallel
              block orderings},
   JOURNAL = {SIAM J. Matrix Anal. Appl.},
  FJOURNAL = {SIAM Journal on Matrix Analysis and Applications},
    VOLUME = {10},
      YEAR = {1989},
    NUMBER = {3},
     PAGES = {326--346},
      ISSN = {0895-4798},
   MRCLASS = {65F15 (65Y05)},
  MRNUMBER = {1003103},
MRREVIEWER = {Bo\ K\aa gstr\"om},
       DOI = {10.1137/0610025},
       URL = {https://doi-org.ezproxy.nsk.hr/10.1137/0610025},
}

@article {Hari15,
    AUTHOR = {Hari, Vjeran},
     TITLE = {Convergence to diagonal form of block {J}acobi-type methods},
   JOURNAL = {Numer. Math.},
  FJOURNAL = {Numerische Mathematik},
    VOLUME = {129},
      YEAR = {2015},
    NUMBER = {3},
     PAGES = {449--481},
      ISSN = {0029-599X,0945-3245},
   MRCLASS = {65F15},
  MRNUMBER = {3311458},
MRREVIEWER = {Meisam\ Sharify},
       DOI = {10.1007/s00211-014-0647-8},
       URL = {https://doi-org.ezproxy.nsk.hr/10.1007/s00211-014-0647-8},
}

@article {BKHari17,
    AUTHOR = {Hari, Vjeran and Begovi\'c Kova\v{c}, Erna},
     TITLE = {Convergence of the cyclic and quasi-cyclic block {J}acobi
              methods},
   JOURNAL = {Electron. Trans. Numer. Anal.},
  FJOURNAL = {Electronic Transactions on Numerical Analysis},
    VOLUME = {46},
      YEAR = {2017},
     PAGES = {107--147},
      ISSN = {1068-9613},
   MRCLASS = {65F15 (15B57)},
  MRNUMBER = {3652027},
MRREVIEWER = {Rafikul\ Alam},
}

@article {BKHari24,
    AUTHOR = {Begovi\'c Kova\v{c}, Erna and Hari, Vjeran},
     TITLE = {Convergence of the complex block {J}acobi methods under the
              generalized serial pivot strategies},
   JOURNAL = {Linear Algebra Appl.},
  FJOURNAL = {Linear Algebra and its Applications},
    VOLUME = {699},
      YEAR = {2024},
     PAGES = {421--458},
      ISSN = {0024-3795,1873-1856},
   MRCLASS = {65F15 (65F25)},
  MRNUMBER = {4777800},
MRREVIEWER = {Raffaella\ Pavani},
       DOI = {10.1016/j.laa.2024.07.012},
       URL = {https://doi-org.ezproxy.nsk.hr/10.1016/j.laa.2024.07.012},
}

@article {Drmac09,
    AUTHOR = {Drma\v{c}, Zlatko},
     TITLE = {A global convergence proof for cyclic {J}acobi methods with
              block rotations},
   JOURNAL = {SIAM J. Matrix Anal. Appl.},
  FJOURNAL = {SIAM Journal on Matrix Analysis and Applications},
    VOLUME = {31},
      YEAR = {2009},
    NUMBER = {3},
     PAGES = {1329--1350},
      ISSN = {0895-4798,1095-7162},
   MRCLASS = {65F15 (15A18)},
  MRNUMBER = {2587780},
MRREVIEWER = {Ross\ A.\ Lippert},
       DOI = {10.1137/090748548},
       URL = {https://doi-org.ezproxy.nsk.hr/10.1137/090748548},
}

@article {Masc95,
    AUTHOR = {Mascarenhas, Walter F.},
     TITLE = {On the convergence of the {J}acobi method for arbitrary
              orderings},
   JOURNAL = {SIAM J. Matrix Anal. Appl.},
  FJOURNAL = {SIAM Journal on Matrix Analysis and Applications},
    VOLUME = {16},
      YEAR = {1995},
    NUMBER = {4},
     PAGES = {1197--1209},
      ISSN = {0895-4798},
   MRCLASS = {65F15},
  MRNUMBER = {1351466},
       DOI = {10.1137/S0895479890179631},
       URL = {https://doi-org.ezproxy.nsk.hr/10.1137/S0895479890179631},
}

@article {DrVe07-1,
    AUTHOR = {Drma\v{c}, Zlatko and Veseli\'c, Kre\v simir},
     TITLE = {New fast and accurate {J}acobi {SVD} algorithm. {I}},
   JOURNAL = {SIAM J. Matrix Anal. Appl.},
  FJOURNAL = {SIAM Journal on Matrix Analysis and Applications},
    VOLUME = {29},
      YEAR = {2008},
    NUMBER = {4},
     PAGES = {1322--1342},
      ISSN = {0895-4798,1095-7162},
   MRCLASS = {65F20 (15A12 15A18 15A23)},
  MRNUMBER = {2369298},
MRREVIEWER = {Dario\ Fasino},
       DOI = {10.1137/050639193},
       URL = {https://doi-org.ezproxy.nsk.hr/10.1137/050639193},
}

@article {DrVe07-2,
    AUTHOR = {Drma\v{c}, Zlatko and Veseli\'c, Kre\v simir},
     TITLE = {New fast and accurate {J}acobi {SVD} algorithm. {II}},
   JOURNAL = {SIAM J. Matrix Anal. Appl.},
  FJOURNAL = {SIAM Journal on Matrix Analysis and Applications},
    VOLUME = {29},
      YEAR = {2008},
    NUMBER = {4},
     PAGES = {1343--1362},
      ISSN = {0895-4798,1095-7162},
   MRCLASS = {65F20 (15A12 15A18 15A23)},
  MRNUMBER = {2369299},
MRREVIEWER = {Dario\ Fasino},
       DOI = {10.1137/05063920X},
       URL = {https://doi-org.ezproxy.nsk.hr/10.1137/05063920X},
}

@article {DeVe92,
    AUTHOR = {Demmel, James and Veseli\'c, Kre\v{s}imir},
     TITLE = {Jacobi's method is more accurate than {$QR$}},
   JOURNAL = {SIAM J. Matrix Anal. Appl.},
  FJOURNAL = {SIAM Journal on Matrix Analysis and Applications},
    VOLUME = {13},
      YEAR = {1992},
    NUMBER = {4},
     PAGES = {1204--1245},
      ISSN = {0895-4798},
   MRCLASS = {65F15 (65G05)},
  MRNUMBER = {1182723},
       DOI = {10.1137/0613074},
       URL = {https://doi-org.ezproxy.nsk.hr/10.1137/0613074},
}

@article {Matejas09,
    AUTHOR = {Mateja\v{s}, Josip},
     TITLE = {Accuracy of the {J}acobi method on scaled diagonally dominant
              symmetric matrices},
   JOURNAL = {SIAM J. Matrix Anal. Appl.},
  FJOURNAL = {SIAM Journal on Matrix Analysis and Applications},
    VOLUME = {31},
      YEAR = {2009},
    NUMBER = {1},
     PAGES = {133--153},
      ISSN = {0895-4798,1095-7162},
   MRCLASS = {65F15 (15A18 65F35)},
  MRNUMBER = {2487054},
MRREVIEWER = {Fabio\ Di Benedetto},
       DOI = {10.1137/070685993},
       URL = {https://doi-org.ezproxy.nsk.hr/10.1137/070685993},
}

@article {Mathias95,
    AUTHOR = {Mathias, Roy},
     TITLE = {Accurate eigensystem computations by {J}acobi methods},
   JOURNAL = {SIAM J. Matrix Anal. Appl.},
  FJOURNAL = {SIAM Journal on Matrix Analysis and Applications},
    VOLUME = {16},
      YEAR = {1995},
    NUMBER = {3},
     PAGES = {977--1003},
      ISSN = {0895-4798},
   MRCLASS = {65F15 (65G05)},
  MRNUMBER = {1337657},
MRREVIEWER = {M.\ Znojil},
       DOI = {10.1137/S089547989324820X},
       URL = {https://doi-org.ezproxy.nsk.hr/10.1137/S089547989324820X},
}

@article{EbPa90,
title = {Efficient implementation of {J}acobi algorithms and {J}acobi sets on distributed memory architectures},
journal={J. Parallel Distrib. Comput.},
fjournal = {Journal of Parallel and Distributed Computing},
volume = {8},
number = {4},
pages = {358-366},
year = {1990},
issn = {0743-7315},
doi = {https://doi.org/10.1016/0743-7315(90)90134-B},
url = {https://www.sciencedirect.com/science/article/pii/074373159090134B},
author = {P.J. Eberlein and Haesun Park},
}

@article {BeOVa15,
AUTHOR = {Be\v{c}ka, Martin and Ok\v{s}a, Gabriel and Vajter\v{s}ic, Mari\'an},
TITLE = {New dynamic orderings for the parallel one-sided
              block-{J}acobi {SVD} algorithm},
JOURNAL = {Parallel Process. Lett.},
FJOURNAL = {Parallel Processing Letters},
VOLUME = {25},
YEAR = {2015},
NUMBER = {2},
PAGES = {1550003, 19},
ISSN = {0129-6264,1793-642X},
MRCLASS = {65F15 (65Y05)},
MRNUMBER = {3361206},
DOI = {10.1142/S0129626415500036},
URL = {https://doi-org.ezproxy.nsk.hr/10.1142/S0129626415500036},
}

@article{Veselic76,
    AUTHOR = {Veseli\'c, K.},
     TITLE = {A convergent {J}acobi method for solving the eigenproblem of
              arbitrary real matrices},
   JOURNAL = {Numer. Math.},
  FJOURNAL = {Numerische Mathematik},
    VOLUME = {25},
      YEAR = {1975/76},
    NUMBER = {2},
     PAGES = {179--184},
      ISSN = {0029-599X,0945-3245},
   MRCLASS = {65F15},
  MRNUMBER = {403189},
MRREVIEWER = {F.\ Szidarovszky},
       DOI = {10.1007/BF01462271},
       URL = {https://doi-org.ezproxy.nsk.hr/10.1007/BF01462271},
}

@article{Veselic79,
    AUTHOR = {Veseli\'c, K.},
     TITLE = {On a class of {J}acobi-like procedures for diagonalising
              arbitrary real matrices},
   JOURNAL = {Numer. Math.},
  FJOURNAL = {Numerische Mathematik},
    VOLUME = {33},
      YEAR = {1979},
    NUMBER = {2},
     PAGES = {157--172},
      ISSN = {0029-599X,0945-3245},
   MRCLASS = {65F15},
  MRNUMBER = {549446},
MRREVIEWER = {Ludwig\ Elsner},
       DOI = {10.1007/BF01399551},
       URL = {https://doi-org.ezproxy.nsk.hr/10.1007/BF01399551},
}

@article {VeselicWenzel79,
    AUTHOR = {Veseli\'c, K. and Wenzel, H. J.},
     TITLE = {A quadratically convergent {J}acobi-like method for real
              matrices with complex eigenvalues},
   JOURNAL = {Numer. Math.},
  FJOURNAL = {Numerische Mathematik},
    VOLUME = {33},
      YEAR = {1979},
    NUMBER = {4},
     PAGES = {425--435},
      ISSN = {0029-599X,0945-3245},
   MRCLASS = {65F15},
  MRNUMBER = {553351},
       DOI = {10.1007/BF01399324},
       URL = {https://doi-org.ezproxy.nsk.hr/10.1007/BF01399324},
}

@article{BeOVa02,
AUTHOR = {Be\v{c}ka, Martin and Ok\v{s}a, Gabriel and Vajter\v{s}ic, Mari\'an},
title = {Dynamic ordering for a parallel block-{J}acobi {SVD} algorithm},
year = {2002},
issue_date = {February 2002},
publisher = {Elsevier Science Publishers B. V.},
address = {NLD},
volume = {28},
number = {2},
issn = {0167-8191},
url = {https://doi.org/10.1016/S0167-8191(01)00138-7},
doi = {10.1016/S0167-8191(01)00138-7},
journal={Parallel Comput.},
fjournal = {Parallel computing},
pages = {243–262},
}

@article {Singer12a,
    AUTHOR = {Singer, Sanja and Singer, Sa\v{s}a and Novakovi\'c, Vedran and
              Davidovi\'c, Davor and Bokuli\'c, Kre\v{s}imir and U\v{s}\'cumli\'c, Aleksandar},
     TITLE = {Three-level parallel {$J$}-{J}acobi algorithms for {H}ermitian
              matrices},
   JOURNAL = {Appl. Math. Comput.},
  FJOURNAL = {Applied Mathematics and Computation},
    VOLUME = {218},
      YEAR = {2012},
    NUMBER = {9},
     PAGES = {5704--5725},
      ISSN = {0096-3003,1873-5649},
   MRCLASS = {65F15 (65Y05)},
  MRNUMBER = {2870087},
       DOI = {10.1016/j.amc.2011.11.067},
       URL = {https://doi-org.ezproxy.nsk.hr/10.1016/j.amc.2011.11.067},
}

@article {Singer12b,
    AUTHOR = {Singer, Sanja and Singer, Sa\v{s}a and Novakovi\'c, Vedran and
              U\v{s}\'cumli\'c, Aleksandar and Dunjko, Vedran},
     TITLE = {Novel modifications of parallel {J}acobi algorithms},
   JOURNAL = {Numer. Algorithms},
  FJOURNAL = {Numerical Algorithms},
    VOLUME = {59},
      YEAR = {2012},
    NUMBER = {1},
     PAGES = {1--27},
      ISSN = {1017-1398,1572-9265},
   MRCLASS = {65F15 (65Y05)},
  MRNUMBER = {2892442},
MRREVIEWER = {Raffaella\ Pavani},
       DOI = {10.1007/s11075-011-9473-6},
       URL = {https://doi-org.ezproxy.nsk.hr/10.1007/s11075-011-9473-6},
}

@article {Mehl08,
    AUTHOR = {Mehl, Christian},
     TITLE = {On asymptotic convergence of nonsymmetric {J}acobi algorithms},
   JOURNAL = {SIAM J. Matrix Anal. Appl.},
  FJOURNAL = {SIAM Journal on Matrix Analysis and Applications},
    VOLUME = {30},
      YEAR = {2008},
    NUMBER = {1},
     PAGES = {291--311},
      ISSN = {0895-4798,1095-7162},
   MRCLASS = {65F15},
  MRNUMBER = {2399581},
MRREVIEWER = {Elias\ Jarlebring},
       DOI = {10.1137/060663246},
       URL = {https://doi-org.ezproxy.nsk.hr/10.1137/060663246},
}

@article {CaHi18,
    AUTHOR = {Carson, Erin and Higham, Nicholas J.},
     TITLE = {Accelerating the solution of linear systems by iterative
              refinement in three precisions},
   JOURNAL = {SIAM J. Sci. Comput.},
  FJOURNAL = {SIAM Journal on Scientific Computing},
    VOLUME = {40},
      YEAR = {2018},
    NUMBER = {2},
     PAGES = {A817--A847},
      ISSN = {1064-8275,1095-7197},
   MRCLASS = {65F10 (65F05 65F35 65G50)},
  MRNUMBER = {3775138},
MRREVIEWER = {Dimitrios\ Christou},
       DOI = {10.1137/17M1140819},
       URL = {https://doi-org.ezproxy.nsk.hr/10.1137/17M1140819},
}

@article {OkCa22,
    AUTHOR = {Oktay, Eda and Carson, Erin},
     TITLE = {Multistage mixed precision iterative refinement},
   JOURNAL = {Numer. Linear Algebra Appl.},
  FJOURNAL = {Numerical Linear Algebra with Applications},
    VOLUME = {29},
      YEAR = {2022},
    NUMBER = {4},
     PAGES = {Paper No. e2434, 24},
      ISSN = {1070-5325,1099-1506},
   MRCLASS = {65F10},
  MRNUMBER = {4468908},
       DOI = {10.1002/nla.2434},
       URL = {https://doi-org.ezproxy.nsk.hr/10.1002/nla.2434},
}

@article {HiMa22,
    AUTHOR = {Higham, Nicholas J. and Mary, Theo},
     TITLE = {Mixed precision algorithms in numerical linear algebra},
   JOURNAL = {Acta Numer.},
  FJOURNAL = {Acta Numerica},
    VOLUME = {31},
      YEAR = {2022},
     PAGES = {347--414},
      ISSN = {0962-4929,1474-0508},
   MRCLASS = {65G50 (65Fxx)},
  MRNUMBER = {4436588},
MRREVIEWER = {Christos\ Kravvaritis},
       DOI = {10.1017/S0962492922000022},
       URL = {https://doi-org.ezproxy.nsk.hr/10.1017/S0962492922000022},
}

@article{Tisseur26,
      title={Computing accurate singular values using a mixed-precision one-sided {J}acobi algorithm}, 
      author={Zhengbo Zhou and Françoise Tisseur and Marcus Webb},
      year={2026},
    journal = {arXiv:2602.18134 [math.NA]},
      url={https://arxiv.org/abs/2602.18134}, 
}

@article {Eberlein62,
    AUTHOR = {Eberlein, P. J.},
     TITLE = {A {J}acobi-like method for the automatic computation of
              eigenvalues and eigenvectors of an arbitrary matrix},
   JOURNAL = {J. Soc. Indust. Appl. Math.},
  FJOURNAL = {Journal of the Society for Industrial and Applied Mathematics},
    VOLUME = {10},
      YEAR = {1962},
     PAGES = {74--88},
      ISSN = {0368-4245},
   MRCLASS = {65.40},
  MRNUMBER = {139264},
MRREVIEWER = {H.\ H.\ Goldstine},
}

@article {BKPe26,
    AUTHOR = {Begovi\'c{} Kova\v{c}, Erna and Perkovi\'c, Ana},
     TITLE = {On the block {E}berlein diagonalization method},
   JOURNAL = {Linear Algebra Appl.},
  FJOURNAL = {Linear Algebra and its Applications},
    VOLUME = {738},
      YEAR = {2026},
     PAGES = {1--24},
      ISSN = {0024-3795,1873-1856},
   MRCLASS = {65F15},
  MRNUMBER = {5039027},
       DOI = {10.1016/j.laa.2026.02.033},
       URL = {https://doi-org.ezproxy.nsk.hr/10.1016/j.laa.2026.02.033},
}

@article {BeHa21,
    AUTHOR = {Hari, Vjeran and Begovi\'c{} Kova\v{c}, Erna},
     TITLE = {On the convergence of complex {J}acobi methods},
   JOURNAL = {Linear Multilinear Algebra},
  FJOURNAL = {Linear and Multilinear Algebra},
    VOLUME = {69},
      YEAR = {2021},
    NUMBER = {3},
     PAGES = {489--514},
}

\end{document}